\pgfplotsset{compat=1.10}
\newcommand{\st}[1]{\ifmmode\text{\sout{\ensuremath{#1}}}\else\sout{#1}\fi}
\numberwithin{equation}{section} 
\newtheorem{theorem}{Theorem}[section]
\newtheorem{corollary}[theorem]{Corollary}
\newtheorem{problem}[theorem]{Problem}
\newtheorem{lemma}[theorem]{Lemma}
\newtheorem{proposition}[theorem]{Proposition}
\theoremstyle{definition} 
\newtheorem{definition}[theorem]{Definition}
\newtheorem{remark}[theorem]{Remark}
\newcommand{\R}{\mathbb{R}}	
\newcommand{\N}{\mathbb{N}} 
\newcommand{\dx}{\,\mathrm{d}x}	
\newcommand{\ds}{\,\mathrm{d}S}	
\newcommand{\dS}{\,\mathrm{d}S}	
\newcommand{\dr}{\,\mathrm{d}r}	
\renewcommand{\d}{\mathrm{d}}
\newcommand{\weak}{\rightharpoonup}
\newcommand{\nnu}{\bm{\nu}}  
\newcommand{\eps}{\varepsilon}
\newcommand{\norm}[1]{\left\lVert #1 \right\lVert}
\newcommand{\abs}[1]{\left| #1 \right|}
\newcommand{\sub}{\subseteq}
\newcommand{\tu}[1]{\textup{#1}}
\DeclareMathOperator{\dist}{\mathrm{dist}}
\newenvironment{bvp}{\left\{\begin{aligned}  }{\end{aligned}\right.}
\begin{document}
	
	\title[Structure of the free interfaces near triple junctions]{Structure of the free interfaces near triple junction singularities in harmonic maps and optimal partition problems}

	\author[R. Ognibene]{Roberto Ognibene}
	\address{Roberto Ognibene
		\newline \indent Dipartimento di Matematica e Applicazioni
		\newline \indent Universit\`a degli studi di Milano-Bicocca
		\newline\indent  Via Roberto Cozzi, 55, 20126 Milano, Italy}
	\email{roberto.ognibene@unimib.it}
	
	\author[B. Velichkov]{Bozhidar Velichkov}
	\address{Bozhidar Velichkov
		\newline \indent Dipartimento di Matematica
		\newline \indent Universit\`a di Pisa
		\newline\indent  Largo Bruno Pontecorvo, 5, 56127 Pisa, Italy}
	\email{bozhidar.velichkov@unipi.it}

	\keywords{Triple junction, singular set, epiperimetric inequality, Y-configuration, Almgren's frequency function, harmonic maps into singular spaces, optimal partitions, segregated systems}
	\subjclass{
		35R35,   
		49Q10   
	}

	\begin{abstract}
	We consider energy-minimizing harmonic maps into trees and we prove the regularity of the singular part of the free interface near triple junction points. Precisely, by proving a new epiperimetric inequality, we show that around any point of frequency $\sfrac32$, the free interface is composed of three $C^{1,\alpha}$-smooth $(d-1)$-dimensional manifolds (composed of points of frequency $1$) with common $C^{1,\alpha}$-regular boundary (made of points of frequency $\sfrac32$) that meet along this boundary at 120 degree angles. Our results also apply to spectral optimal partition problems for the Dirichlet eigenvalues.
	\end{abstract}
	
	\maketitle

	\section{Introduction}
	This paper is dedicated to the structure of the singular set of harmonic energy-minimizing maps $u:B_1\to\Sigma$, defined on $B_1\subset\R^d$ and with values in singular spaces $\Sigma$. We consider the model case in which the target space is of the form 
	\begin{equation}\label{e:definition-of-Sigma-N}
		\Sigma_N:=\left\{X\in\R^N\colon {X_i\geq 0~\text{for any $i$ and } }X_iX_j=0~\text{for all }j\neq i\right\}\,
	\end{equation} 
     endowed with the distance
     \begin{equation*}
         \d_{\Sigma_N}(X,Y):=\begin{cases}
		 	|X_i-Y_i|,&\text{if }X_j=Y_j=0~\text{for all }j\neq i, \\
		 	|X_i|+|Y_j|,&\text{if }X_i,Y_j>0~\text{for some }i\neq j.
		 \end{cases}
    \end{equation*}
	By the works of Gromov-Schoen \cite{GS}, Sun \cite{Sun}, Caffarelli-Lin \cite{CL2007, CL2008}, Tavares-Terracini \cite{TerraciniTavares2012} and Soave-Terracini \cite{ST-opt-gap} it is known that if
	$$\mathcal F(u)=\{x\in B_1\ :\ u(x)=0\},$$
	is the nodal set of a local minimizer $u=(u_1,\dots,u_N)$ of the energy 
	\begin{equation*}	E(u,B_1):=\sum_{i=1}^N\int_{B_1}|\nabla u_i|^2\dx\,,
	\end{equation*}
	then $\mathcal F(u)$ can be decomposed into regular and singular parts according to the values of the Almgren's frequency $\gamma(u,x)$ (see \Cref{sub:intro-almgren}) as follows: 
	$$\mathrm{Reg}(u)=\big\{x\in \mathcal F(u)\ :\ \gamma(u,x)=1\big\}\qquad\text{and}\qquad \mathrm{Sing}(u)=\big\{x\in \mathcal F(u)\ :\ \gamma(u,x)\geq \sfrac{3}{2}\big\}.$$
	By \cite{GS, Sun, CL2008}, in every dimension $d\ge 2$, the regular part $\mathrm{Reg}(u)$ is a smooth $(d-1)$-dimensional manifold, while the structure of the singular set was studied in \cite{GS,Sun} and \cite{Alper,Dees}, where it was shown that $\mathrm{Sing}(u)$ has Hausdorff dimension $d-2$ (\cite{GS,Sun}) and is $(d-2)$-rectifiable (\cite{Alper,Dees}). In dimension $d=2$, Conti, Terracini and Verzini showed in \cite{CTV2003} that the singular set is discrete and gave a complete description of the nodal set $\mathcal F(u)$ around any singular point (see also the recent paper \cite{spadaro-generic} for the case of generic solutions). We also refer to the recent papers \cite{ST_partially_1,ST_partially_2} for the case of partially segregated  energy-minimizing maps with values in a singular space.

  \medskip
	
	In this paper we study the regularity of the lowest stratum of $\mathrm{Sing}(u)$, that is
	 \begin{equation}\label{e:32-singular-set}
	 	\mathcal{F}_{\sfrac32}(u):=\big\{x\in \mathcal F(u)\ :\ \gamma(u,x)=\sfrac32\big\},
	 	 	\end{equation}
and the structure of the entire free interface $\mathcal F(u)$ around singular points in $\mathcal{F}_{\sfrac32}(u)$ in any dimension $d\ge 2$. Our main result is the following:
	
	\begin{theorem}\label{t:main-main}
		Let $u:B_1\to\Sigma_N$ be an energy-minimizing map in $B_1\subset\R^d$, for some $d\ge 2$ and $N\ge 1$. Then,  the singular set of $\mathrm{Sing}{(u)}$ of the free interface $\mathcal F(u)$ can be decomposed as 
		$$\mathrm{Sing}{(u)}=\mathcal{F}_{\sfrac{3}{2}}(u)\cup\Big\{x\in \mathcal F{(u)}\ :\ \gamma(u,x)\geq \sfrac32+\delta_d\Big\},$$
        where $\delta_d>0$ is dimensional constant.	The set 
		$\mathcal{F}_{\sfrac32}(u)$
		is an open subset of $\mathrm{Sing}(u)$ and, locally, a $(d-2)$-dimensional $C^{1,\alpha}$-smooth manifold. Moreover, around any $x\in \mathcal{F}_{\sfrac32}(u)$, the nodal set $\mathcal F(u)$ consists of three $(d-1)$-manifolds $\Gamma_{12}$, $\Gamma_{23}$, $\Gamma_{31}$ with common boundary $\mathcal F_{\sfrac32}(u)$, which are all $C^{1,\alpha}$-regular up to $\mathcal F_{\sfrac32}(u)$ and form $120$ degree angles at $\mathcal F_{\sfrac32}(u)$. 
	\end{theorem}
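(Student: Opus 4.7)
The plan is to follow the by-now-classical Weiss-monotonicity plus epiperimetric-inequality strategy, adapted to the Y-type singularity produced by a frequency-$\sfrac32$ blow-up. I would begin by classifying the tangent maps at any $x_0\in\mathcal{F}_{\sfrac32}(u)$: using Almgren's monotonicity, the rescalings $u_{x_0,r}(y):=r^{-\sfrac32}u(x_0+ry)$ converge, along subsequences, to $\sfrac32$-homogeneous energy-minimizing maps $\bar u:\R^d\to\Sigma_N$, and I would show that, up to a rotation of $\R^d$, a translation along the invariance subspace of $\bar u$ and a relabeling of the components of $\Sigma_N$, the only such minimizer is the triple-junction profile $Y_3$ supported on three $120^\circ$ sectors of a $2$-plane, each carrying the standard $\sfrac32$-harmonic sector and trivially extended in the remaining $d-2$ directions. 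This reduction to the planar case is obtained via a Federer-type splitting along the invariance subspace of $\bar u$, followed by the planar classification of $\sfrac32$-minimizing triods from \cite{CTV2003}. In parallel I would set up the Weiss-type energy
\[
W_{\sfrac32}(v,r):=\frac{1}{r^{d+1}}\int_{B_r}|\nabla v|^2\dx-\frac{3}{2\,r^{d+2}}\int_{\partial B_r}|v|^2\dH^{d-1},
\]
which is monotone non-decreasing along the minimizer and constant equal to $W_{\sfrac32}(Y_3,1)$ on homogeneous cones with $Y_3$-trace.

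The core of the argument is an epiperimetric inequality at $Y_3$: there exist $\kappa\in(0,1)$ and $\delta>0$ such that, for every $c:\partial B_1\to\Sigma_N$ with $\|c-Y_3\|_{H^1(\partial B_1)}<\delta$, one can construct a $\Sigma_N$-valued competitor $h:B_1\to\Sigma_N$ with $h=c$ on $\partial B_1$ and
\[
W_{\sfrac32}(h,1)-W_{\sfrac32}(Y_3,1)\le(1-\kappa)\bigl(W_{\sfrac32}(z,1)-W_{\sfrac32}(Y_3,1)\bigr),
\]
where $z$ is the $\sfrac32$-homogeneous extension of $c$. This is the main obstacle: unlike in two-phase or obstacle-type problems, a componentwise Fourier correction of $c-Y_3$ breaks the constraint $h\in\Sigma_N$. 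I expect the construction to split the correction into a neutral/tangential part, handled by an explicit triple-junction-compatible perturbation that absorbs the finite-dimensional eigenspace generated by rotations and translations of the spine, and an orthogonal part, on which a spectral-gap argument sector by sector produces the required energy decrease.

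Once the epiperimetric inequality is available, the remaining conclusions follow the now-standard blueprint. Combining it with the Weiss monotonicity formula at $x_0\in\mathcal{F}_{\sfrac32}(u)$ yields a power-rate decay $W_{\sfrac32}(u,x_0,r)-W_{\sfrac32}(Y_3,1)\le Cr^\alpha$, and a telescoping $L^2(\partial B_1)$-estimate gives uniqueness of the blow-up $Y_{3,x_0}$ together with a Hölder-continuous dependence of its spine direction and plane on $x_0$. The $C^{1,\alpha}$-regularity of $\mathcal{F}_{\sfrac32}(u)$ as a $(d-2)$-manifold then follows by a classical flatness/covering argument. Openness of $\mathcal{F}_{\sfrac32}(u)$ in $\mathrm{Sing}(u)$ and the dimensional frequency gap $\delta_d$ are obtained by a compactness contradiction, exploiting the rigidity of $Y_3$ among $\sfrac32$-homogeneous minimizers: a sequence of singular points with frequency strictly above $\sfrac32$ but converging to $\sfrac32$ would produce, after blow-up, a $\sfrac32$-homogeneous minimizer with an extra singular point on the spine of $Y_3$. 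Finally, to describe the three sheets $\Gamma_{12},\Gamma_{23},\Gamma_{31}$ I would work in a tubular neighborhood of $\mathcal{F}_{\sfrac32}(u)$: in each $2$-dimensional normal slice the Hölder-close blow-up splits $\mathcal{F}(u)$ into three arcs meeting at $120^\circ$, while away from $\mathcal{F}_{\sfrac32}(u)$ each sheet separates only two components of $u$ and is therefore $C^{1,\alpha}$ by the two-phase regularity of \cite{CL2008}; combining these two pieces yields $C^{1,\alpha}$-regularity of each $\Gamma_{ij}$ up to $\mathcal{F}_{\sfrac32}(u)$ with the prescribed $120^\circ$ angle condition.
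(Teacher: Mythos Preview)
Your overall strategy matches the paper's: classify $\sfrac32$-homogeneous blow-ups as rotations of $Y$, prove an epiperimetric inequality at $Y$, deduce a power rate of decay for the Weiss energy and hence uniqueness and H\"older dependence of the blow-up, and conclude $C^{1,\alpha}$-regularity of $\mathcal F_{\sfrac32}(u)$ and of the three sheets. Two of your steps, however, do not work as written.

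\textbf{The frequency gap.} Your compactness argument for $\delta_d$ is not a proof. If $u_n\in\mathcal M_{\gamma_n}(\R^d;N)$ with $\gamma_n\searrow\sfrac32$, then (after normalization) $u_n\to Y$, but $Y$ already has every point of its spine as a frequency-$\sfrac32$ singular point, so there is no ``extra singular point on the spine'' to contradict. The paper obtains the gap directly from the epiperimetric inequality: for $n$ large the trace $c_n:=u_n|_{\partial B_1}$ satisfies the hypotheses of the epiperimetric inequality, which produces a competitor $h$ with $W_{\sfrac32}(h)\le(1-\eps)W_{\sfrac32}(c_n)=(1-\eps)W_{\sfrac32}(u_n)$; minimality of $u_n$ forces $W_{\sfrac32}(u_n)\le W_{\sfrac32}(h)$, hence $W_{\sfrac32}(u_n)\le0$, contradicting $\gamma_n>\sfrac32$.

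\textbf{Persistence of singularities.} Your ``classical flatness/covering argument'' gives at most one point of $\mathcal F_{\sfrac32}(u)$ in each normal $2$-slice (via the decay rate), but it does not give at least one. Without a no-holes lemma, $\mathcal F_{\sfrac32}(u)$ could a priori be a proper subset of a $(d-2)$-graph, and the three sheets $\Gamma_{ij}$ need not extend up to a common boundary. The paper proves this separately: if some normal slice near $x_0$ contained only frequency-$1$ points, then in a small cylinder $\mathcal F(u)$ would be a $C^{1,\alpha}$ $(d-1)$-manifold; a closed curve circling the spine of $Y$ would then be transversal to $\mathcal F(u)$ and, by a mod-$2$ intersection argument in the simply connected cylinder, must cross $\mathcal F(u)$ an even number of times---but the $\eps$-regularity picture away from the spine forces exactly three crossings. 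This topological step is essential and is missing from your outline.

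A smaller remark on the epiperimetric inequality: the paper does not build the competitor by a direct Fourier/spectral correction. It argues by contradiction \`a la Weiss, linearizing $W_{\sfrac32}$ at $Y$ via the identity $W_{\sfrac32}(u)=W_{\sfrac32}(\d_{\Sigma_N}(u,Y))+\beta(u,Y)$, where $\beta\ge0$ is a boundary term on $\partial\{Y_i>0\}\cap\partial\{Y_j>0\}$ encoding the segregation constraint. Controlling $\beta$ along the contradiction sequence is a genuine step (the authors note that omitting it invalidated an earlier argument in the literature), and your sketch does not account for it.
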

	
	Results in the spirit of \cref{t:main-main} exist in the context of area-minimizing surfaces; in this framework, description of the triple junction singularities is known thanks to the historical works from the 70s of Jean Taylor \cite{TaylorInventiones,TaylorAnnals}, for hypersurfaces in $\R^3$, and from the early 90s of Leon Simon \cite{SimonJDG} in any dimension $d\ge 3$. The techniques from \cite{TaylorInventiones,TaylorAnnals,SimonJDG} have been abundantly explored in the last decade (see for instance \cite{modp-1,modp-2,modp-3} and the references therein).

    \medskip
    
    In the context of free boundary problems, triple junction configurations  appear on the graphs of solutions to multiple membrane problems (see for instance \cite{SavinYu2019,SavinYu2021,SavinYu2023,DeSilvaSavin2024}). The $120$-degrees-angles triple junction singularities that we investigate in this paper appear on the free interface formed in the domain of a solution and do not satisfy any a priori graphicality condition (thus our situation is more similar to the one in \cite{TaylorInventiones,TaylorAnnals,SimonJDG}); in this sense, to our knowledge, this is the first instance of a study of triple junction singularities in the contexts of free boundary problems and harmonic maps in dimension $d>2$.

    \medskip
    
	The main ingredient in the proof of \cref{t:main-main} is the following epiperimetric inequality (\cref{t:epi}) for the $\sfrac{3}{2}$-Weiss energy
    \begin{equation*}
		W_{\frac{3}{2}}(u):=\sum_{i=1}^N\int_{B_1}|\nabla u_i|^2\dx-\frac{3}{2}\sum_{i=1}^N\int_{\partial B_1}u_i^2\ds
	\end{equation*}
    near homogeneous triple junctions $Y(x)=Y(x_{d-1},x_d)$ with $2$-dimensional profile 
    defined as
		\begin{equation}\label{eq:def-Y}
			Y_i(r,\theta)=\begin{cases}
				r^{\frac{3}{2}}\abs{\cos\left(\frac{3}{2}\theta\right)},&\text{for }-\pi+\frac{2\pi}{3}(i-1)\leq \theta\leq- \pi+\frac{2\pi}{3}i, \\
				0, &\text{elsewhere},
			\end{cases}
		\end{equation}
		for $i=1,2,3$. 
    \begin{theorem}[Epiperimetric inequality]\label{t:epi}
		There exists $\delta,\eps,\tau\in (0,1)$ depending only on the dimension $d$ such that the following holds. For any $N\in\N$ and any $\sfrac{3}{2}$-homogeneous $c\in H^1(B_1;\Sigma_N)\cap C^{0,1}(B_1;\R^N)$ such that $c_{i}\geq 0$ for all $i=1,\dots,N$,
		\begin{equation*}
			\sum_{i=1}^3\d_{\mathcal{H}}\left(\{c_i>0\},\{Y_i>0\}\right)+\sum_{i=4}^N\d_{\mathcal{H}}\left(\{c_i>0\},\{x_{d-1}=x_d=0\}\right)\leq \tau,
		\end{equation*}
		where the Hausdorff distance $\d_{\mathcal{H}}$ is measured in $\overline B_1$, and
		\begin{equation*}
			\norm{\mathrm{d}_{\Sigma_N}(c,Y)}_{H^1(B_1)}\leq \delta,
		\end{equation*}
		there exists $u\in H^1(B_1;\Sigma_N)\cap C^{0,1}(B_1;\R^N)$ such that $u_i\ge 0$ for all $i$, $u=c$ on $\partial B_1$, and
		\begin{equation*}
			W_{\frac{3}{2}}(u)\leq (1-\eps)W_{\frac{3}{2}}(c).
		\end{equation*}
	\end{theorem}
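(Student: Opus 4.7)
The plan is to use a contradiction-and-compactness scheme for epiperimetric inequalities of the kind developed by Weiss, Colombo--Spolaor--Velichkov, and subsequent authors for singular points in obstacle and free boundary problems, adapted here to the triple-junction geometry and the $\Sigma_N$-valued constraint. Assume the statement fails: then there exist sequences $\tau_n,\delta_n\to 0$ and admissible $\sfrac32$-homogeneous $c_n$ satisfying the hypotheses for which the optimal competitor $u_n$ obeys $W_{\frac{3}{2}}(u_n)>(1-n^{-1})W_{\frac{3}{2}}(c_n)$. I first gauge-fix the planar rotation symmetry of $Y$ by replacing it with the rotated profile $Y^{R_n}$ that minimizes $\norm{\d_{\Sigma_N}(c_n,\cdot)}_{H^1(B_1)}$, set $\eta_n$ to be the corresponding minimum, and define $v_n:=(c_n-Y^{R_n})/\eta_n$ componentwise on each sector of $\supp(Y^{R_n})$. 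The Hausdorff smallness $\tau_n\to 0$ forces the extra components $c_{4,n},\dots,c_{N_n,n}$ to concentrate in shrinking tubes around the singular axis $\{x_{d-1}=x_d=0\}$, and a capacity/frequency argument shows that their contribution to $v_n$ is negligible in the limit.

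Passing to a weak $H^1(B_1)$-limit $v=(v_1,v_2,v_3)$ supported on the three sectors $S_1,S_2,S_3$ of $Y$, I expect $v$ to solve a linearized problem: each $v_i$ is $\sfrac32$-homogeneous and harmonic on $S_i$, vanishes on $\partial S_i\setminus\{x_{d-1}=x_d=0\}$ (from the linearization of the orthogonality $c_i c_j=0$), and satisfies a transmission condition along the singular axis encoding the first variation of the $120$ degree angles. A separation of variables along the $(d-2)$-dimensional axis combined with spherical Fourier expansion in the $(x_{d-1},x_d)$-plane reduces the classification to a one-dimensional eigenvalue problem on three intervals of length $\sfrac{2\pi}{3}$; the lowest Dirichlet eigenvalue on each such interval corresponds exactly to frequency $\sfrac32$, so the $\sfrac32$-homogeneous solutions on a single sector are scalar multiples $\alpha_i Y_i$. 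Combining the three sectors via the axis transmission condition, the whole kernel reduces to either scalar rescalings (which contribute zero Weiss energy) or infinitesimal planar rotations of $Y$ (already projected out by the gauge fixing). Hence $v\equiv 0$, and upgrading the weak convergence to strong via the Euler--Lagrange equation of the quasi-minimizers $c_n$ contradicts $\norm{v_n}_{H^1(B_1)}=1$.

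To convert this contradiction into the stated quantitative statement, I would exploit the orthogonality of $c_n-Y^{R_n}$ to the $\sfrac32$-homogeneous kernel to obtain $W_{\frac{3}{2}}(c_n)\le C\eta_n^2$ by a spectral gap estimate, and then build an explicit admissible competitor $u_n$ with $u_n|_{\partial B_1}=c_n|_{\partial B_1}$ whose energy improves by at least $c\,\eta_n^2$. Concretely, the non-kernel (spectral) part of $c_n-Y^{R_n}$ is reextended in $B_1$ by the $\tau$-homogeneous extension associated with the next eigenvalue $\tau>\sfrac32$ instead of the $\sfrac32$-homogeneous one, and the deformed interface is straightened by interpolating linearly in $r$ between the undeformed axis of $Y^{R_n}$ at the origin and the perturbed interface of $c_n$ at $\partial B_1$; the $\Sigma_N$-valued constraint is preserved by a small projection, which is possible thanks to the Hausdorff smallness $\tau$. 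The \emph{main obstacle}, in my view, is the rigorous derivation and analysis of the linearized problem across the triple axis: one must identify the correct transmission conditions at the singular axis, justify strong $H^1$-convergence of $v_n$ through the interface, and, in dimension $d>2$, rule out additional $\sfrac32$-homogeneous modes generated by variations along the positive-dimensional singular axis. A secondary difficulty is the simultaneous handling of the small extra components $c_{4,n},\dots,c_{N_n,n}$, both in the compactness argument and in the admissibility of the explicit competitor.
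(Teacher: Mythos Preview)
Your overall contradiction-and-compactness scheme is in the right spirit, but there are two genuine gaps that would block the argument as written.

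\textbf{The linearized problem is misidentified.} You write that $v_i$ ``vanishes on $\partial S_i\setminus\{x_{d-1}=x_d=0\}$ (from the linearization of the orthogonality $c_i c_j=0$)''. This is not the correct transmission condition. On the flat interface $\partial\Omega_i\cap\partial\Omega_j$ both $Y_i$ and $Y_j$ vanish, so $c_ic_j=0$ imposes nothing to first order there; what actually couples the sectors is the \emph{moving} interface of $c_n$, which sits slightly off $\partial\Omega_i\cap\partial\Omega_j$. The paper handles this by working not with $c_{n,i}-Y_i$ but with the signed quantity $w_{n,i}:=\delta_n^{-1}\big(Y_i-c_{n,i}+\sum_{j\neq i}c_{n,j}\big)$ on $\Omega_i$, whose absolute value equals $\delta_n^{-1}\d_{\Sigma_N}(c_n,Y)$. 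With this choice the limit satisfies $w_i=-w_j$ and $\nabla w_i=-\nabla w_j$ on $\partial\Omega_i\cap\partial\Omega_j$, so $\widehat w:=-w_1+w_2-w_3$ is harmonic on the slit domain $B_1\setminus\mathcal P$ with mixed (odd/even) conditions on $\mathcal P$; this is what links the classification to the thin-obstacle kernel. With the Dirichlet condition you propose, the kernel on each sector would be just $\alpha_iY_i$ and you would miss the modes $x_jU_0(x_{d-1},x_d)$ and $x_jV_0(x_{d-1},x_d)$ that genuinely arise in $d>2$; correspondingly your gauge fixing by ``planar rotation'' alone is insufficient---the full kernel is $2(d-1)$-dimensional and is killed only by projecting onto the entire orbit $\mathcal M_{3/2}(\R^d;3)$ (scaling plus all rotations of the $(x_{d-1},x_d)$-plane into the remaining coordinates), as the paper does.

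\textbf{The Weiss-energy linearization has an extra term you omit.} The identity $W_{3/2}(u)=W_{3/2}(\d_{\Sigma_N}(u,Y))+\beta(u,Y)$ (Lemma~\ref{lemma:weiss_diff}) carries a nonnegative interface term
\[
\beta(u,Y)=4\sum_{i<j}\int_{B_1\cap\partial\{Y_i>0\}\cap\partial\{Y_j>0\}}|\nabla Y_i|\Big(\sum_{k\neq i,j}u_k\Big)\dS,
\]
which is \emph{linear} in $u$ and records the mass of the ``wrong'' components sitting on each $Y$-interface. A priori $\beta(c_n,Y)$ is only $O(\delta_n)$, not $O(\delta_n^2)$, so after dividing by $\delta_n^2$ it could blow up and destroy the compactness/linearization. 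The paper spends Step~1 constructing a specific $\Sigma_N$-valued competitor (an explicit interpolation from $Y$ to $c_n$ along rays) precisely to prove $\delta_n^{-2}\beta(c_n,Y)$ is bounded; this same competitor is reused in Step~4 to upgrade weak to strong convergence. Your proposed competitor (re-extend the non-kernel part with the next homogeneity and project back to $\Sigma_N$) does not obviously control $\beta$, and the ``small projection'' you invoke to restore the $\Sigma_N$ constraint is exactly where $\beta$-type terms reappear. Without an argument bounding $\delta_n^{-2}\beta(c_n,Y)$, neither the derivation of the linearized equation (your Step~2) nor the strong-convergence contradiction goes through.
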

    
    The proof of \Cref{t:epi} is obtained via a contradiction argument in the spirit of Weiss \cite{Weiss} (see also \cite{FSthin,GPGthin,TaylorAnnals,TaylorInventiones}).  The epiperimetric inequality approach to the analysis of the singular points in the context of harmonic maps is, to our knowledge, new in this field.\medskip

 One of the main consequences of \cref{t:epi} is the uniqueness of the ($\sfrac32$-homogeneous) blow-up at every point of frequency $\sfrac32$. We notice that the uniqueness of the tangent maps was claimed in \cite[Section 3]{CL2010}. The argument in \cite{CL2010} is based on a Monneau-type formula obtained via the Weiss-energy linearization identity from \cite[Lemma 3.2]{CL2010}. Unfortunately, there is a missing term in \cite[Lemma 3.2]{CL2010}, which compromises the conclusion in \cite{CL2010}. The correct formula is given in \cref{lemma:weiss_diff} and contains an extra term $\beta$. This term is linear and positive, and requires particular attention in the proof of the epiperimetric inequality. Finally, we also point out that the term $\beta$ from \cref{lemma:weiss_diff} is specific for our functional and contains information on the interaction between the different phases. 
\medskip

  Our epiperimetric inequality shares some common features with the epiperimetric inequalities from \cite{TaylorAnnals,TaylorInventiones}, for instance, the concept of \enquote{linearization} and the difficulties related to the \enquote{parametrization} over the limit cone.     On the other hand, our functional is of different nature, which is reflected, in particular, in the classifications of the solutions of the linearized problem in \Cref{sec:linearized}, in the Weiss-energy linearization identity from \cref{lemma:weiss_diff}, and in the constructions of the competitors in Step 1 and Step 4 of the proof of the epiperimetric inequality in  \cref{sec:proof-of-epi}, where we face structural problems caused by the segregation constraint which is not present in \cite{Weiss,GPGthin,FSthin,TaylorAnnals,TaylorInventiones}. In fact, there are linear remainder terms that appear in the Weiss-energy identity \Cref{lemma:weiss_diff} and also in the projection of the homogeneous extensions of blow-up sequences onto the space of $\sfrac{3}{2}$-homogeneous minimizers, see Step 3 of the proof of \Cref{t:epi} and, in particular, \eqref{eq:epi3}.\medskip
  
  Once we have the epiperimetric inequality \cref{t:epi}, the conclusion in \cref{t:main-main} follows mainly from the classification of the $\sfrac32$-homogeneous tangent maps (\cref{prop:32-homogeneous-classification}), from the frequency gap from above (\cref{lemma:gap_from_above}), and from a topological argument (\cref{l:no-holes}) similar to \cite{SimonJDG,TaylorAnnals,TaylorInventiones}, which guarantees the persistence of singularities over the triple junction model.

    \subsection{Two consequences of \cref{t:main-main}}\label{sub:further-results}
We now discuss two corollaries of \cref{t:main-main} for harmonic maps with values in metric graphs and for spectral optimal partitions.

\medskip

First of all we notice that, as a corollary of \cref{t:main-main} and the clean-up result of Sun \cite{Sun}, we obtain the following result for energy-minimizing maps with values in $\R$-trees with infinitely many edges and vertices. We will give the precise definition of a $\R$-tree $T$ and an energy-minimizing map with values in $T$ in \cref{s:harmonic-maps-R-trees}; we only stress here that the sets $\Sigma_N$ from \eqref{e:definition-of-Sigma-N} is a particular example of an $\R$-tree and that, for general $\R$-trees $T$ and energy-minimizing maps $u:B_1\to T$, the Almgren's frequency $\gamma(u,x)$ from \cref{t:main-main} is replaced by the so-called order function $\textrm{Ord}^{u}(x)$ (see \cref{s:harmonic-maps-R-trees}). The key point is that, thanks to the clean-up result of Sun \cite{Sun}, even if the target tree has infinitely many vertices and infinitely many edges (see \cref{sub:R-trees-definition}) coming out of each vertex, any energy-minimizing map with values in $T$ locally \enquote{sees} only finitely many vertices and edges; at this point, thanks to the regularity results of Gromov and Schoen \cite{GS}, one can locally replace $T$ by a tree of the form $\Sigma_N$, so  \cref{t:main-main} applies locally. Our main result for energy-minimizing maps in general $\R$-trees is the following; we postpone the proof and the precise definitions to \cref{s:harmonic-maps-R-trees}. 

\begin{theorem}\label{t:harmonic-maps-graphs}
		Let $B_1$ be the unit ball in $\R^d$, for some $d\ge 2$. Let $T$ be an $\R$-tree and let $u:B_1\to T$ be an energy-minimizing map. Then, the singular set 
        $$\mathrm{Sing}{(u)}=\{x\in\Omega\ :\ {\rm Ord}^u(x)>1\},$$
        can be decomposed as
		$$\mathrm{Sing}{(u)}:=\mathcal{F}_{\sfrac32}{(u)}\cup\Big\{x\in \Omega\ :\ {\rm Ord}^u(x)\geq\sfrac32+\delta_d\Big\},$$
        where $\delta_d>0$ is a dimensional constant and where 
        $$\mathcal{F}_{\sfrac32}{(u)}:=\big\{x\in \Omega\ :\ {\rm Ord}^u(x)=\sfrac32\big\}.$$
        Moreover:
        \begin{enumerate}
        \item[{\rm (i)}] The set 
		$\mathcal{F}_{\sfrac32}{(u)}$
		is an open subset of $\mathrm{Sing}(u)$ and is locally a $(d-2)$-dimensional $C^{1,\alpha}$-smooth manifold.
        \item[\rm (ii)]
        Around any $x\in \mathcal{F}_{\sfrac32}{(u)}$, the set $u^{-1}(u(x))$ consists of three $(d-1)$-manifolds 
        with common boundary $\mathcal F_{\sfrac32}{(u)}$, which are all $C^{1,\alpha}$-regular up to $\mathcal F_{\sfrac32}{(u)}$ and form $120$ degree angles at $\mathcal F_{\sfrac32}{(u)}$. 
        \end{enumerate}
	\end{theorem}

Another consequence concerns the regularity of the following optimal partition problem studied in \cite{CL2007}, \cite{CTV2003,CTVFucick2005} and \cite{OV} (see also \cite{tavares_1,tavares_2,tavares_3} for further developments). Given a bounded open set $D\subset\R^d$, $d\ge 2$, and a natural number $N\ge 2$ we consider the optimal partition problem
\begin{equation}\label{e:optimal-partition}
\min\Big\{\sum_{j=1}^N\lambda_1(\Omega_j)\ :\ \Omega_j\,-\,\text{open},\ \Omega_j\subset D,\ \Omega_i\cap\Omega_j=\emptyset\ \text{for}\ i\neq j\Big\},
\end{equation}
where $\lambda_1(\Omega_j)$ is the first eigenvalue on $\Omega_j$ with Dirichlet boundary conditions on $\partial\Omega_j$. If the $N$-tuple of disjoint sets $(\Omega_1,\dots,\Omega_N)$ is a minimizer of \eqref{e:optimal-partition} and if $u=(u_1,\dots,u_N)$ is the vector of the normalized first eigenfunctions (each $u_j$ is extended by zero outside $\Omega_j$), then the free interface 
$$\mathcal F{(u)}:=D\cap\Big({\bigcup_{j=1}^N}\partial\Omega_j\Big)$$ 
can be decomposed as a regular part and a singular part as follows (see \cite{CL2007,ST-opt-gap,Alper}): the regular part $\mathrm{Reg}{(u)}$ consists of all points $x\in\mathcal F{(u)}$ of frequency $\gamma(u,x)=1$ and is a smooth manifold of dimension $d-1$; the singular part $\mathrm{Sing}{(u)}$ is a closed $(d-2)$-rectifiable set composed of all the points of frequency $\gamma(u,x)\geq \sfrac{3}{2}$. We have the following theorem.
	\begin{theorem}\label{t:optimal-partitions}
	Let $(\Omega_1,\dots,\Omega_N)$ be a minimizer of \eqref{e:optimal-partition} and $u=(u_1,\dots,u_N)$ be the corresponding vector of the normalized first eigenfunctions. 	Then, the singular set of $\mathrm{Sing}{(u)}$ of the free interface $\mathcal F(u)$ can be decomposed as
		$$\mathrm{Sing}{(u)}=\Big\{x\in \mathcal F{(u)}\ :\ \gamma(u,x)=\sfrac32\Big\}\cup\Big\{x\in \mathcal F{(u)}\ :\ \gamma(u,x)\geq \sfrac32+\delta_d\Big\},$$
        where $\delta_d>0$ is dimensional constant.	The set $\mathcal{F}_{\sfrac32}{(u)}:=\big\{x\in \mathcal F(u)\ :\ \gamma(u,x)=\sfrac32\big\}$
		is an open subset of $\mathrm{Sing}(u)$ and is locally a $(d-2)$-dimensional $C^{1,\alpha}$-smooth manifold.  Moreover, around any $x\in \mathcal{F}_{\sfrac32}{(u)}$, the set $\mathcal F{(u)}$ consists of three $(d-1)$-manifolds 
        with common boundary $\mathcal F_{\sfrac32}{(u)}$, which are all $C^{1,\alpha}$-regular up to $\mathcal F_{\sfrac32}{(u)}$ and form $120$ degree angles at $\mathcal F_{\sfrac32}{(u)}$. 
	\end{theorem}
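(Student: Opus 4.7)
The plan is to reduce \cref{t:optimal-partitions} to \cref{t:main-main} via a perturbative argument. The vector $u=(u_1,\dots,u_N)$ takes values in $\Sigma_N$, since $u_iu_j\equiv 0$ for $i\ne j$, and each component satisfies $-\Delta u_i = \lambda_i u_i$ in $\{u_i>0\}$. Thus, on small balls, $u$ is a local minimizer of the perturbed energy $\sum_i\int_{B_r(x_0)}\bigl(|\nabla u_i|^2-\lambda_i u_i^2\bigr)\dx$ among $\Sigma_N$-valued competitors with the same trace on $\partial B_r(x_0)$. Morally, this reduces to the harmonic-map setting of \cref{t:main-main}, because the lower-order eigenvalue term scales away under blow-up.

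First I would revisit the blow-up analysis at a free-interface point. Following \cite{CL2007,ST-opt-gap,Alper}, the Almgren frequency, corrected by an exponential factor, is non-decreasing, and at every $x_0\in\mathcal F(u)$ the rescalings $u_{x_0,r}(y):=H(x_0,r)^{-1/2}u(x_0+ry)$, with $H(x_0,r)=r^{1-d}\int_{\partial B_r(x_0)}|u|^2\ds$, are pre-compact in $H^1_{\mathrm{loc}}(\R^d)$ and converge, along subsequences and up to rotations, to a homogeneous energy-minimizing harmonic map into $\Sigma_N$. At points of frequency $\sfrac32$, the limits are therefore $\sfrac32$-homogeneous harmonic maps, which by the classification (\cref{prop:32-homogeneous-classification}) coincide, up to rotation and relabeling of phases, with the profile $Y$ of \eqref{eq:def-Y}, the remaining components being identically zero.

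The heart of the proof is to combine a perturbed Weiss monotonicity with the epiperimetric decay of \cref{t:epi}. I would derive a modified Weiss formula of the form
$\tfrac{d}{dr}W_{\sfrac32}^{x_0}(u,r)\ge \tfrac{c}{r^{d+1}}\int_{\partial B_r(x_0)}\bigl(\partial_\nu u-\tfrac{3}{2r}u\bigr)^{2}\ds - Cr$,
where the error $Cr$ encodes the eigenvalue terms. On scales $r\le r_0$ at which the hypotheses of \cref{t:epi} are met---Hausdorff closeness of the positivity sets $\{u_i>0\}$ to those of $Y$ together with $H^1$-closeness of $\d_{\Sigma_N}(u_{x_0,r},Y)$---minimality of $u$ combined with \cref{t:epi} yields an almost-geometric decay of the form $W_{\sfrac32}^{x_0}(u,r)\le (1-\eps)\,W_{\sfrac32}^{x_0}(u,\rho r)+Cr^{1+\alpha}$ for some dilation factor $\rho>1$. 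Iterating and absorbing the lower-order error produces a polynomial rate $W_{\sfrac32}^{x_0}(u,r)\le Cr^\alpha$, which in turn gives uniqueness of the blow-up at $x_0$ and H\"older dependence of the tangent $Y$-configuration on $x_0\in\mathcal F_{\sfrac32}(u)$.

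Finally, the argument closes exactly as for \cref{t:main-main}: the gap from above (\cref{lemma:gap_from_above}) shows that $\mathcal F_{\sfrac32}(u)$ is open in $\mathrm{Sing}(u)$; the topological no-holes lemma (\cref{l:no-holes}) ensures persistence of the $Y$-profile in a whole neighborhood of each point of $\mathcal F_{\sfrac32}(u)$; and the H\"older dependence of the tangent map yields, via a standard straightening, that $\mathcal F_{\sfrac32}(u)$ is a $C^{1,\alpha}$ $(d-2)$-manifold and that the three $(d-1)$-sheets of $\mathcal F(u)$ are $C^{1,\alpha}$ up to this boundary, meeting it at $120$ degree angles. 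The main obstacle is controlling the eigenvalue perturbation uniformly through the epiperimetric iteration: the key observation is that this perturbation scales with a strictly larger power of $r$ than the principal Weiss energy at $\sfrac32$-homogeneous configurations, so it is absorbed at the cost of possibly worsening the H\"older exponent $\alpha$, while leaving the qualitative picture of \cref{t:main-main} intact.
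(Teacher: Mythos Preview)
The paper does not supply a standalone proof of \cref{t:optimal-partitions}; it is stated in \Cref{sub:further-results} as a consequence of the main machinery and the cited references \cite{CL2007,ST-opt-gap,Alper,OV}, without further detail. Your outline is precisely the intended reduction: the blow-ups of the eigenfunction vector at free-interface points are energy-minimizing harmonic maps in $\mathcal{M}_\gamma(\R^d;N)$ (this is already contained in \cite{CL2007,ST-opt-gap}), so \cref{prop:32-homogeneous-classification}, \cref{lemma:gap_from_above} and \cref{l:no-holes} apply verbatim at the blow-up level, and the only additional work is running the epiperimetric argument through a Weiss monotonicity formula perturbed by the eigenvalue terms. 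You correctly identify that this perturbation scales with a strictly higher power of $r$ than the $\sfrac32$-Weiss energy and is therefore absorbed in the Gronwall/iteration step, at worst worsening the H\"older exponent.

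One small technical point: your iterative formulation $W_{\sfrac32}^{x_0}(u,r)\le (1-\eps)\,W_{\sfrac32}^{x_0}(u,\rho r)+Cr^{1+\alpha}$ is not quite how the epiperimetric inequality is used. Since \cref{t:epi} is stated for an arbitrary $\sfrac32$-homogeneous trace $c$ (not for the solution itself), the standard route is differential: one combines the perturbed Weiss derivative bound with the inequality $W_{\sfrac32}(u^{x_0,r})\le W_{\sfrac32}(v)+Cr^2\le (1-\eps)W_{\sfrac32}(c_r)+Cr^2$, where $v$ is the competitor produced by \cref{t:epi} and the $Cr^2$ error comes from comparing the Dirichlet and the eigenvalue energies on $B_r$. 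This yields an ODE of the form $\tfrac{d}{dr}W_{\sfrac32}^{x_0}(u,r)\ge \tfrac{(d+1)\eps}{r}W_{\sfrac32}^{x_0}(u,r)-Cr$, which integrates to the polynomial decay you state. This is a cosmetic rewriting, not a gap; the substance of your proposal is correct and matches the paper's (implicit) argument.
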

    
   \subsection{The Bishop-Friedland-Hayman's  conjecture for the min-max}\label{sub:intro-bishop}

	Consider, for $p\in(0,+\infty]$, the family of optimal partition problems
		\begin{equation}\label{eq:p_partition}
		\min\left\{ \left(\sum_{i=1}^N\lambda_1(\omega_i)^p\right)^{\frac{1}{p}}\colon \omega_i\sub\partial B_1~\text{open, connected and s.t. }\omega_i\cap\omega_j=\emptyset ~\text{for }i\neq j \right\},\tag{$\text{OP}_p$}
	\end{equation}
	where $\lambda_1(\omega)$ denotes the first Dirichlet eigenvalue of the spherical Laplacian in $\omega\sub\partial B_1$, and 
	\begin{equation*}
		\left(\sum_{i=1}^N\lambda_1(\omega_i)^p\right)^{\frac{1}{p}}:=
		\max_{i\in\{1,\dots,N\}}\lambda_1(\omega_i)
        \quad\text{when}\quad p=+\infty.
	\end{equation*} 
	In the particular case of the $3$-partition, i.e. $N=3$, we have the following conjecture.
	\begin{problem}[Bishop-Friedland-Hayman-Helffer-Hoffmann-Ostenhof-Terracini]\label{conj}
		For $N=3$, prove that the $Y$-configuration is the unique minimizer of \eqref{eq:p_partition}, for any $p\in(0,+\infty]$.
	\end{problem}
	This conjecture was posed in \cite{Bishop} in the case $p=1$, but can be generalized for any $p\in(0,+\infty]$ (see \cite{FH} and  \cite{HHOT10}) and is related to shape optimization and other optimal partition problems \cite{CTV2003,CTVFucick2005,BBH1998,Helffer-survey,HH10,HHOT10}. 
    \cref{conj} was solved for $p=+\infty$ in dimension $d=3$ by Helffer, Hoffmann-Ostenhof and Terracini in \cite{HHOT10}. In \cref{sub:classification} we solve \cref{conj}, for $p=+\infty$ and in any dimension $d\ge 3$, as a consequence of the classification of $\sfrac32$-homogeneous tangent maps (first obtained in \cite{ST-opt-gap}), see \cref{cor:min-max}.

    \subsection*{Organization of the paper} The rest of the paper is organized as follows. In \Cref{sec:intro-notations} we fix the notation, the functional setting and we recall some known facts about segregated solutions. In \Cref{sec:gap}, we classify the $\sfrac{3}{2}$-homogeneous solutions and we find, as a corollary, the shape of minimizers of a certain optimal partition problem on the sphere (see \Cref{cor:min-max}). \Cref{sec:linearized} contains the classification of $\sfrac{3}{2}$-homogeneous solutions to the linearized problem over 
    $Y$-configurations. \Cref{sec:proof-of-epi} is devoted to the proof of our main tool, the epiperimetric inequality \Cref{t:epi} and, finally, in \Cref{sec:regularity} we conclude the proof of our main regularity result \Cref{t:main-main}.
    
	\section{Preliminaries}\label{sec:intro-notations}
    In this section we introduce the functional setting and we define the main tools in our framework, such as monotonicity formulas.
    
    For any $N\ge 1$, let $\Sigma_N$ be as in \eqref{e:definition-of-Sigma-N}. For any open set $D\sub \R^d$, we define the Sobolev space
	\begin{equation*}
		H^1(D;\Sigma_N)=\left\{u\in (H^1(D))^N\colon u_iu_j=0~\text{a.e. in }D~\text{for all }i\neq j\right\},
	\end{equation*}
	with the space $H^1_{\tu{loc}}(D;\Sigma_N)$ being analogously defined; we refer to \cite{KS} for the theory of Sobolev spaces in this setting. Now, to each open $D\sub\R^d$ and each function $u\in H^1(D;\Sigma_N)$ we associate the corresponding Dirichlet energy, defined as
	\begin{equation*}
		E(u,D):=\sum_{i=1}^N\int_D|\nabla u_i|^2\dx.
	\end{equation*}
	In the present paper, we work with certain non-negative stationary points of $E$, hence we first define the space of admissible functions as
	\begin{equation*}
		\mathcal{A}(D;N):=\left\{u\in H^1_{\tu{loc}}(D;\Sigma_N)\colon u_i\geq 0~\text{for all }i=1,\dots,N\right\}.
	\end{equation*}
	We consider two classes of \enquote{critical points}. The first one is the class of minimizers: we say that $u\in H^1_{\tu{loc}}(D;\Sigma_N)$ is a \emph{local minimizer}, and we write $u\in \mathcal{M}(D;N)$, if
	\begin{equation*}
		E(u,\Omega)\leq E(v,\Omega)
	\end{equation*}
	for all open $\Omega\Subset D$ and all $v\in H^1(\Omega;\Sigma_N)$ such that $u-v\in H^1_0(\Omega;\Sigma_N)$. 
    We also introduce the class $\mathcal{S}(D;N)$ of functions $u\in\mathcal{A}(D;N)$ satisfying
	\begin{equation*}
		\begin{bvp}
			-\Delta u_i &\leq 0,&&\text{in }D, \\
			-\Delta \Big(u_i-\sum_{j\neq i}u_j\Big)&\geq 0, &&\text{in }D,
		\end{bvp}
	\end{equation*}
	in the sense of distributions, for all $i=1,\dots,N$. In particular, there holds $\mathcal{M}(D;N)\sub\mathcal{S}(D;N)$ (see e.g. \cite[Theorem 5.1]{CTV2005indiana}). The converse inclusion $\mathcal{S}(D;N)\sub\mathcal{M}(D;N)$ was proved in \cite[Theorem 1.6]{wang-zhang}, so the two classes coincide: $\mathcal{S}(D;N)=\mathcal{M}(D;N)$. In what follows, we keep both notations for historical reasons, as most of the papers do distinguish between the two classes, and also in order to emphasize whether we make use of the minimality or the Euler-Lagrange equation.

    \medskip

    We now recall the main basic properties of critical points $u\in\mathcal{S}(D;N)$ and energy-minimizers $u\in\mathcal{M}(D;N)$ and of the free boundary $\mathcal{F}(u)$. Most of the properties we recall were proved in a series of papers, namely \cite{GS,CL2007} for the class $\mathcal{M}(D;N)$ and \cite{CTV2005indiana,TerraciniTavares2012,ST-opt-gap} for $\mathcal{S}(D;N)$; for what concerns the regularity of different classes of segregated solutions we refer to \cite{NRS24,ST24,ST24a}.

    \subsection{Lipschitz continuity}
     Critical points of the energy functional belonging to the class $\mathcal{S}(D;N)$ satisfy the following optimal regularity result.
    \begin{theorem}[Theorem 8.4 \cite{CTV2005indiana}]
        If $u\in \mathcal S(D;N)$, then $u$ is locally Lipschitz continuous. Moreover, if $\partial D$ is of class $C^1$ and $u_i=\phi_i$ on $\partial D$, for any $i=1,\dots,N$ and for some $\phi\in \mathcal{A}(D;N)$, then $u_i$ is locally Lipschitz continuous up to $\partial D$.
    \end{theorem}
    We point out that, in case of minimizers, the Lipschitz continuity was first proved in \cite{GS}. As a direct consequence, we have that
    \begin{equation*}
		\Omega^u_i:=\{x\in D\colon u_i(x)>0\}
	\end{equation*}
    is open and that $u_i$ is harmonic, hence analytic, in $\Omega_i^u$.

	\subsection{Almgren and Weiss monotonicity formulas}\label{sub:intro-almgren}
	For any non-trivial $u\in\mathcal{S}(D;N)$ one can define the Almgren \emph{frequency function} for any $x_0\in D$ and any $r<\dist(x_0,\partial D)$ as
	\begin{equation*}
		\mathcal{N}(u,x_0,r):=\frac{E(u,x_0,r)}{H(u,x_0,r)},
	\end{equation*}
	where
	\begin{equation*}
		E(u,x_0,r):=\frac{1}{r^{d-2}}E(u,B_r(x_0))=\frac{1}{r^{d-2}}\sum_{i=1}^N\int_{B_r(x_0)}|\nabla u_i|^2\dx
	\end{equation*}
	is the scaled energy and
	\begin{equation*}
		H(u,x_0,r):=\frac{1}{r^{d-1}}\sum_{i=1}^N\int_{B_r(x_0)}u_i^2\ds
	\end{equation*}
	is the scaled height function. From \cite[Theorem 2.2 \& Remark 2.4]{TerraciniTavares2012} we get the monotonicity of $\mathcal{N}$, i.e. we have the following.
    \begin{theorem}
        Let $u\in\mathcal{S}(D;N)$ be such that $u\not\equiv 0$ in $D$. Then for any $x_0\in D$, we have that for any $r<\dist(x_0,\partial D)$ there holds
        \begin{itemize}
            \item $H(u,x_0,r)>0$;
            \item the map $\mathcal{N}(u,x_0,\cdot)$ is absolutely continuous in $(0,r)$;
            \item $\partial_r\, \mathcal{N}(u,x_0,r)\geq 0$.
        \end{itemize}
    \end{theorem} 
    
    Hence, the \emph{frequency} of $u$ 
	\begin{equation*}
		\gamma(u,x_0):=\lim_{r\to 0^+}\mathcal{N}(u,x_0,r)
	\end{equation*}
	is well-defined at any point $x_0\in D$. As a standard consequence, one can derive unique continuation properties implying that $\mathcal{F}(u)$ has empty interior and that
	\begin{equation*}
		\mathcal{F}(u)=D\cap\Big(\bigcup_{i=1}^N\partial\Omega_i^u\Big).
	\end{equation*}
    Hence, we can also split the points of the free boundary in terms of their frequency; namely, for any $\gamma>0$ we let
    \begin{equation*}
        \mathcal{F}_\gamma(u):=\{x\in\mathcal{F}(u)\colon \gamma(u,x)=\gamma\}.
    \end{equation*}
Finally, we point out that also the Weiss energy
\begin{equation*}
    W_\gamma(u,x_0,r):=\frac{H(u,x_0,r)}{r^{2\gamma}}\Big(\mathcal{N}(u,x_0,r)-\gamma\Big)
\end{equation*}
is monotone non-decreasing with respect to $r>0$, for any $0<\gamma\leq \gamma(u,x_0)$. This is a standard direct corollary of the monotonicity of $\mathcal{N}(u,x_0,\cdot)$ (for the proof we refer e.g. to \cite[Proposition 6.16]{OV}).

	\subsection{Blow-up analysis}\label{sub:intro-blow-up} 
    Let $u\in\mathcal{S}(D;N)$, respectively $\mathcal{M}(D;N)$, and let $x_0\in\mathcal F_\gamma(u)$, for some $\gamma>0$. We define the \emph{Almgren rescalings} as
	\[
	u_{x_0,r}(x):=\frac{u(x_0+rx)}{\sqrt{H(u,x_0,r)}}.
	\]
	As a consequence of the Almgren monotonicity formula, we have that $u_{x_0,r}$ converges, up to subsequences, to an element of 
    \begin{equation*}
		\mathcal{S}_\gamma(\R^d;N):=\left\{u\in\mathcal{S}(\R^d;N)\colon u~\text{is $\gamma$-homogeneous}\right\},
	\end{equation*}
	respectively,
	\begin{equation*}
		\mathcal{M}_\gamma(\R^d;N):=\left\{u\in\mathcal{M}(\R^d;N)\colon u~\text{is $\gamma$-homogeneous}\right\}.
	\end{equation*}
    More precisely, we have the following.
    \begin{theorem}
    For any sequence $r_n\to0$ there is a subsequence (still denoted by $r_n$) such that
	\begin{equation*}
		u_{x_0,r_n}\to U\quad\text{uniformly in $B_R$ and in }H^1(B_R;\R^N) 
	\end{equation*}
	as $n\to\infty$, for all $R>0$, for some $U\in\mathcal{S}_\gamma(\R^d;N)$, respectively $U\in\mathcal{M}_\gamma(\R^d;N)$.
    \end{theorem}
    For the proof, we refer e.g. to \cite[Theorem 3.3]{TerraciniTavares2012} and \cite[Proposition 6.13]{OV}.

    \subsection{Free boundary regularity}\label{sec:free_boundary_reg}
    We here summarize the main known results concerning the regularity and geometrical structure of the free boundary in the interior. This collects results from \cite{TerraciniTavares2012,ST-opt-gap}.
    \begin{theorem}
        For any $u\in\mathcal{S}(D;N)$, we have that 
    \begin{equation*}
        \text{either}\quad\gamma(u,x)=1\quad\text{or}\quad \gamma(u,x)\geq \frac{3}{2},
    \end{equation*}
    for any $x\in D$. Moreover, we can decompose
    \begin{equation*}
			\mathcal{F}(u)=D\cap\Big(\bigcup_{i=1}^N \partial \Omega_i^u\Big)=\mathrm{Reg}(u)\cup \mathrm{Sing}(u),
		\end{equation*}
		where 
        $$\mathrm{Reg}(u)=\{x\in D\colon \gamma(u,x)=1\}=\mathcal{F}_1(u)$$
        is the union of $(d-1)$-dimensional manifolds of class $C^{1,\alpha}$ and
        \begin{equation*}
            \mathrm{Sing}(u)=\left\{x\in D\colon \gamma(u,x)\geq \frac{3}{2}\right\}=\bigcup_{\gamma\geq \sfrac{3}{2}} \mathcal{F}_\gamma (u)
        \end{equation*}
        satisfies
        \begin{equation}\label{eq:dim_sing}
            \dim_{\mathcal{H}}\big(\mathrm{Sing}(u)\big)\leq d-2.
        \end{equation}
        For every $i=1,\dots,N$, the function 
        $u_i$ is differentiable at all points of $\overline\Omega_i^u\cap D$ and its gradient $\nabla u_i:\overline\Omega_i^u\cap D\to\R^d$ is continuous. Moreover, we have: 
        \begin{equation}\label{eq:grad_interface_sing}
        |\nabla u_i|=0\quad\text{on}\quad \mathrm{Sing}(u)\cap\partial\Omega_i\,,
        \end{equation}
        and 
		\begin{equation}\label{eq:grad_interface}
			|\nabla u_i|=|\nabla u_j|>0\quad\text{on}\quad\mathrm{Reg}(u)\cap \partial \Omega_i^u\cap\partial\Omega_j^u\,.
		\end{equation}   
        \end{theorem}

	\section{Singular blow-ups of minimal frequency}\label{sec:gap}
    The aim of this section is to classify homogeneous singular solutions with minimal frequency $\sfrac{3}{2}$, i.e. functions in $\mathcal{S}_{\sfrac{3}{2}}(\R^d;N)$.
	\subsection{A topological lemma}
	This section is dedicated to a topological lemma, which we use both in the classification of the $\sfrac32$-homogeneous blow-ups (\Cref{prop:32-homogeneous-classification}) and in the proof of the no-holes lemma (\Cref{l:no-holes}).

	\begin{definition}\label{def:family-of-curves}
		Let $\mathcal M$ be a smooth $m$-dimensional manifold and let $\mathcal F\subset \mathcal M$ be a $C^1$ submanifold
		of dimension $m-1$ (that is,  for every $x_0\in \mathcal F$ there is an open neighborhood $\omega\subset\mathcal M$ of $x_0$ and a $C^1$-diffeomorphism $\Phi:\omega\to B_1\subset\R^d$ such that $\Phi(\mathcal F)=B_1\cap\{x_m=0\}$). For any $x,y\in\mathcal M\setminus\mathcal F$, we denote by $C(x\to y)$ the family of piecewise $C^1$ curves transversal to $\mathcal F$. Precisely, we say that $\ell\in\mathcal C(x\to y)$ if: 
		\begin{enumerate}
			\item $\ell\colon[0,1]\to\mathcal M$, $\ell(0)=x$ and $\ell(1)=y$;
			\item $\ell\in C([0,1];\mathcal M)$ and there are $k=k(\ell)$ points $0=T_0< T_1< \dots< T_k=1,$
			such that: $\ell\in  C^{1}((T_{j-1},T_j);\mathcal M)$ and $\ell'\neq0$ on $(T_{j-1},T_j)$ for every $j=1,\dots, k$, and
			$$\ell(T_j)\notin\mathcal F\quad\text{for every}\quad j={0},\dots, k;$$
			\item if $\ell(t)\in\mathcal F$ for some $t\in(0,1)$, then $\ell$ is transversal to $\mathcal F$ at $\ell(t)$, that is, $\ell'(t)\notin T_{\ell(t)}\mathcal F$, where $T_{\ell(t)}\mathcal F$ is the tangent space to $\mathcal F$ at $\ell(t)$.
		\end{enumerate}	
	\end{definition}
	
	\begin{lemma}
		Let $\mathcal M$ be a smooth manifold of dimension $m$. Suppose that $\mathcal F$ is an $(m-1)$-dimensional $C^1$ submanifold of $\mathcal M$ and suppose that $\mathcal F$ is a relatively closed subset of $\mathcal M$. Let $x,y\in\mathcal M\setminus \mathcal F$ and let $\ell\in\mathcal C(x\to y)$ be a curve in $\mathcal M$. Then, the set of intersection points $\mathcal I(\ell):=\{t\in(0,1)\ :\ \ell(t)\in\mathcal F\}$ is finite. 
	\end{lemma}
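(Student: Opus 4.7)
The plan is to argue by contradiction: assume $\mathcal I(\ell)$ is infinite and produce an accumulation point at which transversality fails.

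First I would observe that $\mathcal I(\ell)$ is a closed subset of $[0,1]$. Indeed, if $t_n\in\mathcal I(\ell)$ and $t_n\to t^*\in[0,1]$, then by continuity of $\ell$ and the hypothesis that $\mathcal F$ is relatively closed in $\mathcal M$, we get $\ell(t^*)\in\mathcal F$. Since $\ell(0)=x\notin\mathcal F$ and $\ell(1)=y\notin\mathcal F$, necessarily $t^*\in(0,1)$, hence $t^*\in\mathcal I(\ell)$. Moreover, by condition (2) in \Cref{def:family-of-curves}, $\ell(T_j)\notin\mathcal F$ for each breakpoint $T_j$, so $t^*\neq T_j$ for every $j=0,\dots,k$.

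Now assume $\mathcal I(\ell)$ is infinite. Being an infinite closed (hence compact) subset of $[0,1]$, it contains an accumulation point $t^*\in\mathcal I(\ell)\subset(0,1)\setminus\{T_0,\dots,T_k\}$, so $t^*$ lies in the interior of some smoothness interval $(T_{j-1},T_j)$ on which $\ell$ is $C^1$ with $\ell'\neq 0$. Choose the neighborhood $\omega\subset\mathcal M$ of $\ell(t^*)$ and the $C^1$-diffeomorphism $\Phi:\omega\to B_1\subset\R^m$ provided by the submanifold structure, so that $\Phi(\mathcal F\cap\omega)=B_1\cap\{x_m=0\}$. Define the $C^1$ function $f(t):=\Phi_m(\ell(t))$ for $t$ in a small neighborhood $I\subset(T_{j-1},T_j)$ of $t^*$ with $\ell(I)\subset\omega$. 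Then $f(t^*)=0$, and the chain rule gives $f'(t^*)=\d\Phi_m(\ell(t^*))[\ell'(t^*)]$.

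The transversality condition (3) says $\ell'(t^*)\notin T_{\ell(t^*)}\mathcal F$; since $T_{\ell(t^*)}\mathcal F=(\d\Phi(\ell(t^*)))^{-1}(\{x_m=0\})$, this precisely means $f'(t^*)\neq 0$. By the inverse function theorem applied to $f:I\to\R$, after shrinking $I$ the map $f$ is a $C^1$-diffeomorphism onto its image, so $f^{-1}(\{0\})=\{t^*\}$ on $I$. Hence there is no sequence $t_n\in I\setminus\{t^*\}$ with $\ell(t_n)\in\mathcal F$, contradicting that $t^*$ is an accumulation point of $\mathcal I(\ell)$. The only routine ingredient is the identification of the tangent space $T_{\ell(t^*)}\mathcal F$ through the chart $\Phi$, which I would expect to be the only place needing any care.
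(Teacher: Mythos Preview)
Your proof is correct and follows essentially the same contradiction argument as the paper: assume infinitely many intersection times, extract an accumulation point $t^*\in\mathcal I(\ell)$, and use transversality at $t^*$ to show that $t^*$ is an isolated zero, a contradiction. Your write-up is in fact more careful than the paper's, since you explicitly verify that $t^*$ cannot be one of the breakpoints $T_j$ and spell out the local-chart computation showing that transversality forces $f'(t^*)\neq 0$.
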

	\begin{proof}
		Suppose that the set $\mathcal I$ contains an infinite sequence $(t_n)_{n\ge 1}$. Then, we can suppose that $t_n$ converges to some $t_\infty\in (0,1)$. By continuity of $\gamma$ and the fact that $\mathcal F$ is closed, we have $\gamma(t_\infty)\in\mathcal I$. But $\gamma$ is transversal to $\mathcal F$ at $t_\infty$. Thus, for $t\neq t_\infty$ in a neighborhood of $t_\infty$, $\gamma(t)\notin\mathcal F$, which is a contradiction.
	\end{proof}
	
	\begin{lemma}\label{l:topological-lemma}
		Let $\mathcal M$ be a simply connected smooth manifold of dimension $m$. Suppose that $\mathcal F\subset \mathcal M$ is a relatively closed subset of $\mathcal M$ and a $C^1$-submanifold of dimension $m-1$.
		Suppose that $x\in\mathcal M\setminus\mathcal F$ and that $\ell:[0,1]\to\mathcal M$ is a closed curve in $\mathcal C(x\to x)$.
		Then the set $\mathcal I(\ell)$ has an even number of elements.    
	\end{lemma}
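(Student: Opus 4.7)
The plan is to prove this via a standard mod-$2$ intersection number argument, exploiting simple connectedness to reduce to a constant loop and then using the structure of compact $1$-manifolds. The rough outline is as follows.

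First, I would invoke simple connectedness of $\mathcal{M}$ to obtain a continuous homotopy $H:[0,1]\times[0,1]\to\mathcal{M}$ with $H(\,\cdot\,,0)=\ell$, $H(\,\cdot\,,1)\equiv x$, and $H(0,s)=H(1,s)=x$ for all $s\in[0,1]$. Because $x\notin\mathcal{F}$ and $\mathcal{F}$ is relatively closed, we can choose a small neighborhood $U\ni x$ with $U\cap\mathcal{F}=\emptyset$; since $\ell$ is already transverse to $\mathcal{F}$, we can deform $H$ without changing it on $\partial([0,1]^2)\setminus([0,1]\times\{0\})$ so that $H$ becomes smooth on $[0,1]^2$ (in fact we only need $C^1$), remains constant near the three mentioned boundary sides, and is transverse to $\mathcal{F}$ in its interior. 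This is a standard application of the smooth/$C^1$ Thom transversality theorem together with the fact that the set of intersection points of $\ell$ with $\mathcal{F}$ is finite and transverse by \Cref{def:family-of-curves} and the previous lemma.

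Having achieved transversality, I would study the preimage
\begin{equation*}
    \Sigma:=H^{-1}(\mathcal{F})\subset [0,1]^2.
\end{equation*}
By transversality (and the fact that $\dim\mathcal{F}=m-1$ while the source has dimension $2$, with $H$ mapping into $\mathcal{M}$ of dimension $m$), $\Sigma$ is a properly embedded $1$-dimensional $C^1$-submanifold of $[0,1]^2$ with boundary contained in $\partial([0,1]^2)$. By construction $H$ sends the three sides $\{0\}\times[0,1]$, $\{1\}\times[0,1]$, and $[0,1]\times\{1\}$ entirely into $\mathcal{M}\setminus\mathcal{F}$, so
\begin{equation*}
    \partial\Sigma=\Sigma\cap([0,1]\times\{0\})=\mathcal{I}(\ell)\times\{0\}.
\end{equation*}
Since $\Sigma$ is a compact $1$-manifold with boundary, it is a finite disjoint union of closed arcs and circles; the number of boundary points of a compact $1$-manifold is always even, which gives $|\mathcal{I}(\ell)|\in 2\N$.

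The main obstacle is the transversality step: the loop $\ell$ is only piecewise $C^1$, and a priori the homotopy provided by simple connectedness is merely continuous. To handle this cleanly, I would first smoothen $\ell$ inside small neighborhoods of its corners $\ell(T_j)\notin\mathcal{F}$, so that the approximated loop is $C^1$, still in $\mathcal{C}(x\to x)$, and has the same set $\mathcal{I}(\ell)$; then apply the $C^1$ Whitney approximation and transversality theorem to $H$ relative to the boundary portion where $H$ already avoids $\mathcal{F}$. Once this technical point is cleared, the parity conclusion is immediate from the structure theorem for compact $1$-manifolds, and the lemma follows.
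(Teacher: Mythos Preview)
Your argument is correct in spirit and follows the standard differential-topological route via mod-$2$ intersection theory, but it takes a genuinely different path from the paper's proof. The paper avoids transversality machinery entirely: it discretizes the homotopy by choosing a fine grid of points $\widetilde X_{i,j}\in\mathcal M\setminus\mathcal F$ such that every $2\times2$ block of adjacent points sits either in a chart disjoint from $\mathcal F$ or in a chart where $\mathcal F$ is a hyperplane, connects adjacent grid points by transversal arcs, and then tracks the parity of the intersection count $M(\ell_i)$ along successive ``rows'' $\ell_i$ of the net, showing that $M(\ell_{i+1})-M(\ell_i)$ is even by an elementary local computation in each chart. Your approach is shorter and more conceptual---once transversality of $H$ is secured, the parity falls out immediately from the classification of compact $1$-manifolds---whereas the paper's argument is longer but entirely self-contained and sidesteps the Whitney approximation and Thom transversality theorems, which is convenient given that $\mathcal F$ is only $C^1$ and $\ell$ only piecewise~$C^1$.

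One small imprecision to fix: you write that you deform $H$ ``without changing it on $\partial([0,1]^2)\setminus([0,1]\times\{0\})$'', but if the perturbation is allowed to move $H$ on the bottom edge then $\partial\Sigma$ need not equal $\mathcal I(\ell)\times\{0\}$. After smoothening $\ell$ you should apply the relative transversality theorem keeping $H$ fixed on \emph{all} of $\partial[0,1]^2$; this is legitimate precisely because $H|_{[0,1]\times\{0\}}$ coincides with the smoothed loop, which is already transverse to $\mathcal F$, while the other three sides map to $x\notin\mathcal F$. With that correction the argument goes through.
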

	\begin{proof}
		Since $\mathcal M$ is simply connected, we can find a continuous homotopy $\Gamma:[0,1]\times[0,1]\to\mathcal M$ that deforms $\ell$ into the constant curve $x$. Precisely,
		$$\Gamma(0,\cdot)=\ell(\cdot)\qquad\text{and}\qquad\Gamma(1,\cdot)\equiv x.$$
		We proceed in several steps.    
		
		\noindent{\it Step 1. Discrete homotopy.} Since $\mathcal F$ is a $C^1$ embedded submanifold, we can find points $(s_i,t_j)$ of the form 
		$$(s_i,t_j)=\left(\frac{i}{n},\frac{j}{n}\right)\quad i=0,\dots,n\ ,\ j=1,\dots,n,$$
		such that for every set of \enquote{consecutive} 4 points:
		$$Q_{i,j}:=\Big\{X_{i,j}:=\Gamma(s_i,t_j),~X_{i+1,j}:=\Gamma(s_{i+1},t_j),~X_{i,j+1}:=\Gamma(s_i,t_{j+1}),~X_{i+1,j+1}:=\Gamma(s_{i+1},t_{j+1})\Big\}$$
		we have that either: 
		\begin{enumerate}
			\item[(Q1)] $Q_{i,j}$ is contained in an open set $\omega_1\subset \mathcal M\setminus\mathcal F$ diffeomorphic to a ball;
			\item[(Q2)] $Q_{i,j}$ is contained in an open set $\omega_2\subset\mathcal M$ for which there is a $C^1$-diffeomorphism $\Phi:\omega_2\to B_1\subset\R^m$ such that $\Phi(\mathcal F)=B_1\cap\{x_m=0\}$. 
		\end{enumerate}
		We next notice that by perturbing slightly each of the points $X_{i,j}$ we can construct another family of $n\times n$ points
		$$\Big\{\widetilde X_{i,j}\in\mathcal U\setminus\mathcal F\ :\ 0\le i\le n-1,\ 0\le j\le n-1\Big\},$$
		such that any set of the form
		$$\widetilde Q_{i,j}:=\Big\{\widetilde X_{i,j},\ \widetilde X_{i+1,j},\ \widetilde X_{i,j+1},\ \widetilde X_{i+1,j+1}\Big\}\,,$$
		satisfies (Q1) or (Q2) above. Moreover, we can also suppose that 
		$$\widetilde X(0,j)\in\ell([0,1])\quad\text{for every}\quad j=0,\dots,n;$$
		$$\widetilde X(n,j)=x\quad\text{for every}\quad j=0,\dots,n.$$
		
		\noindent{\it Step 2. Construction of a piecewise $C^1$-regular net with vertices $\widetilde X_{ij}$.} In what follows we will use the following notation, for every $0\le i,j\le n$
		\begin{itemize}
			\item $\omega_{ij}$ is the open set 
			$$\omega_{ij}:=\begin{cases}
				\omega_1\quad\text{if}\  (Q1)\ \text{is verified for the set}\ \widetilde Q_{ij}\,;\\
				\omega_2\quad\text{if}\ (Q2)\ \text{is verified for the set}\  \widetilde Q_{ij}\,.
			\end{cases}	$$	
			\item $\alpha_{ij}\in\mathcal C(\widetilde X_{i,j}\to\widetilde X_{i,j+1})$	is a curve lying in $\omega_{ij}$ (so $\alpha_{ij}$ does not cross $\mathcal F$);
			\item $\beta_{ij}^+\in\mathcal C(\widetilde X_{i,j}\to\widetilde X_{i+1,j})$	is a curve lying in $\omega_{ij}$ and $\beta_{ij}^-(t):=\beta_{ij}^+(1-t)$.
		\end{itemize}	
		Finally, for every $i=1,\dots,n$, we set 
		$$\ell_i:=\alpha_{i1}\ast\alpha_{i2}\ast\dots\ast\alpha_{i(n-1)},$$
		where $\ast$ denotes the usual concatenation of curves.
		We notice that, up to a reparametrization, we can choose the curves  $\alpha_{0j}$, $j=1,\dots,n$, to be pieces of $\ell$. Moreover, we can choose $\alpha_{nj}$, $j=1,\dots,n$, to be the constant curves $\alpha_{nj}(t)=x$ for all $t$. Thus, for all $t$, we have:
		$$\ell_0(t)=\ell(t)\qquad\text{and}\qquad\ell_n(t)=x.$$
		\noindent{\it Step 3. The map $M$.} In what follows, for any curve $\ell:[0,1]\to\mathcal M$ with $\ell\in\mathcal C(x\to y)$ for some $x,y\in\mathcal M\setminus\mathcal F$, we will denote by $M(\ell)$ the number of elements of $\mathcal I(\ell)$. The map $M$ has the following properties: 
		\begin{enumerate}
			\item[(M1)] Let $x,y,z\in \mathcal M\setminus\mathcal F$. If $\ell_1\in\mathcal C(x\to y)$ and $\ell_2\in\mathcal C(y\to z)$, then $\ell_1\ast \ell_2\in\mathcal C(x\to z)$ and 
			$$M(\ell_1\ast\ell_2)=M(\ell_1)+M(\ell_2).$$
			\item[(M2)]  Let $x,y\in \mathcal M\setminus\mathcal F$. If $\ell_+\in\mathcal C(x\to y)$ and $\ell_-$ is defined as 
			$$\ell_-(t)=\ell(1-t),$$
			then $\ell_-\in \mathcal C(y\to x)$ and $M(\ell_+)=M(\ell_-)$.
			\item[(M3)]  Let $x,y,z,w\in \mathcal M\setminus\mathcal F$ be 4 points in a set $\omega_2$ as in (Q2) and let the curves $\alpha,\beta,\gamma,\delta$ be as follows: 
			$$\alpha\in \mathcal C(x\to y)\ ,\quad \beta\in \mathcal C(y\to z)\ ,\quad \gamma\in \mathcal C(z\to w)\ ,\quad \delta\in \mathcal C(x\to w)\ .$$
			Then, we have that 
			$$M(\delta)=M(\alpha\ast\beta\ast\gamma)=M(\alpha)+M(\beta)+M(\gamma).$$
		\end{enumerate}	
		
		\noindent{\it Step 4. Conclusion.} Now, using the properties (M1), (M2) and (M3), we obtain that, for every pair of consecutive indices $i$ and $i+1$ with $0\le i<i+1\le n$, the following holds:
		\begin{align*}
			M(\ell_{i+1})=\sum_{j=0}^{n-1}M(\alpha_{(i+1)j})
			&=	\sum_{j=0}^{n-1}M(\beta_{ij}^-\ast \alpha_{ij}\ast\beta_{ij}^+)\\
			&=	\sum_{j=0}^{n-1}\Big(M(\beta_{ij}^-)+M(\alpha_{ij})+M(\beta_{ij}^+)\Big)\\\
			&=	\sum_{j=0}^{n-1}\Big(M(\alpha_{ij})+2M(\beta_{ij}^+)\Big)\\
			&=M(\ell_{i})+2\sum_{j=0}^{n-1}M(\beta_{ij}^+),
		\end{align*}	
		so that 
		$$(-1)^{M(\ell_{i+1})}=(-1)^{M(\ell_i)}.$$
		Since $i$ is arbitrary, this implies 
		$$(-1)^{M(\ell_n)}=(-1)^{M(\ell_0)}=(-1)^{M(\ell)}.$$
		Finally, since $\ell_n$ is the constant curve $\ell_n(t)=x$ for every $t$, we get that $M(\ell_n)=0$. This concludes the proof since $M(\ell)$ by definition is the number of elements of $\mathcal I(\ell)$.
	\end{proof}
	
	\subsection{Classification of the blow-ups of frequency $3/2$}\label{sub:classification}	%
		In this section we prove that any $\sfrac32$-homogeneous function $u\in \mathcal S_{\sfrac{3}{2}}(\R^d;N)$ is of the form $Y$ (see \cref{prop:32-homogeneous-classification}).  We stress that even if this result is not explicitly stated in  \cite{ST-opt-gap}, the key elements (the dimension reduction argument and \cref{l:optimal-gap-main-lemma}) are already contained in the proof of \cite[Lemma 4.2]{ST-opt-gap}; here below we give the details since we need \cref{prop:32-homogeneous-classification} in \cref{t:main-main}.

	\begin{lemma}\label{l:optimal-gap-main-lemma}
		Let $d\ge 3$ and $N\ge 2$. Suppose that $u=(u_1,\dots,u_N)\in\mathcal S_\gamma(\R^d;N)$ is a non-trivial $\gamma$-homogeneous function.
	Suppose that all the free boundary points on the unit sphere are regular, that is
		\[
		\mathcal F(u)\equiv \mathcal F_1(u)\quad\text{on}\quad \partial B_1,
		\]
		and that all the sets $\Omega_i^u\cap\partial B_1$, $i=1,\dots,N$ are connected. Then, there is a function 
		\[
		\sigma:\{1,\dots,N\}\to\{-1,1\},
		\]
		such that the function $\sigma\cdot u$, given by
		\[
		(\sigma\cdot u)(x):=\sum_{i=1}^N\sigma(i)u_i(x)
		\]
		is harmonic in $\R^d$. In particular, $\gamma$ is an integer. 
	\end{lemma}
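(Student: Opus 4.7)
The plan is to exploit the regularity of the free boundary on $\partial B_1$ to construct a global sign assignment $\sigma$ that \enquote{linearizes} the segregation condition, so that $\sigma\cdot u$ becomes harmonic on all of $\R^d$; the hypothesis $\mathcal{F}(u)\equiv\mathcal{F}_1(u)$ on $\partial B_1$ guarantees the free boundary is a smooth codimension-two stratum to which the topological lemma applies.

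First I would set $\omega_i:=\Omega_i^u\cap\partial B_1$ and $\mathcal{F}^\star:=\mathcal{F}(u)\cap\partial B_1$. By the assumption that every free-boundary point on $\partial B_1$ has frequency $1$, the regularity theory recalled in \Cref{sec:intro-notations} implies that $\mathcal{F}^\star$ is a smooth $(d-2)$-dimensional submanifold of $\partial B_1$, relatively closed, and that around each of its points exactly two phases $u_i,u_j$ are nontrivial, meeting across $\mathcal{F}^\star$ transversally.

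Next I would build $\sigma$ via a path-parity argument. Fix $p_0\in\omega_1$ and declare $\sigma(1):=+1$. For an arbitrary $p\in\partial B_1\setminus\mathcal{F}^\star$ pick any transversal arc $\ell\in\mathcal C(p_0\to p)$, in the sense of \Cref{def:family-of-curves} applied to $\mathcal M=\partial B_1$ and $\mathcal F=\mathcal F^\star$, and define
\begin{equation*}
	\widetilde\sigma(p):=(-1)^{|\mathcal I(\ell)|}.
\end{equation*}
Independence of the chosen arc is exactly where \Cref{l:topological-lemma} enters: two admissible curves from $p_0$ to $p$ can be concatenated into a closed loop in $\partial B_1$, which, using the simple connectedness of $S^{d-1}$ (this is where the assumption $d\ge 3$ is used), crosses $\mathcal F^\star$ an even number of times. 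Since $\widetilde\sigma$ is locally constant on $\partial B_1\setminus\mathcal F^\star$ and each $\omega_i$ is connected by hypothesis, it descends to a well-defined map $\sigma:\{1,\dots,N\}\to\{-1,+1\}$, and by construction adjacent phases receive opposite signs.

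Then I would verify that $\sigma\cdot u$ is harmonic in all of $\R^d$. On each open set $\Omega_i^u$ the component $u_i$ is harmonic, hence so is $\sigma\cdot u=\sigma(i)u_i$. At a regular point $x\in\mathcal F^\star$ only two phases $u_i,u_j$ are nontrivial nearby, and a short transversal arc through $x$ produces a single crossing, so $\sigma(j)=-\sigma(i)$. Writing the defining inequalities of $\mathcal S(D;N)$ for the indices $i$ and $j$ in this local two-phase configuration gives
\begin{equation*}
	-\Delta(u_i-u_j)\ge 0\qquad\text{and}\qquad-\Delta(u_j-u_i)\ge 0,
\end{equation*}
hence $\Delta(u_i-u_j)=0$ across $\mathcal F^\star$ in the distributional sense, and locally $\sigma\cdot u=\sigma(i)(u_i-u_j)$ is harmonic through $x$. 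By $\gamma$-homogeneity every free-boundary point of $u$ in $\R^d\setminus\{0\}$ is a positive rescaling of a point in $\mathcal F^\star$ and thus also regular, so $\sigma\cdot u$ is harmonic in $\R^d\setminus\{0\}$; since $\sigma\cdot u\in H^1_{\tu{loc}}(\R^d)$ and a point has zero $H^1$-capacity in $\R^d$ for $d\ge 2$, the origin is a removable singularity and $\sigma\cdot u$ is harmonic in all of $\R^d$. A $\gamma$-homogeneous harmonic function on $\R^d$ is a harmonic polynomial of degree $\gamma$, which forces $\gamma\in\N$.

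The main obstacle is the path-independence in the second step: one must be confident that $\mathcal F^\star$ is a genuine closed $C^1$-submanifold of $\partial B_1$ (so that \Cref{l:topological-lemma} applies verbatim with the codimension-one hypothesis on $\mathcal F$), and that a short transversal arc through a regular frequency-$1$ point contributes exactly one element to $\mathcal I(\ell)$; both facts rely on the regularity theory for frequency-$1$ points recalled in the preliminaries.
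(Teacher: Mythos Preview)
Your proof is correct and follows essentially the same approach as the paper: define $\sigma$ via the parity of crossings along transversal curves on $\partial B_1$, invoke \Cref{l:topological-lemma} (with $\mathcal M=\partial B_1$) for well-definedness, and conclude that adjacent phases carry opposite signs so that $\sigma\cdot u$ is harmonic across $\mathcal F(u)$. You are in fact slightly more thorough than the paper, making explicit where $d\ge 3$ is used (simple connectedness of $\partial B_1$), deriving $\Delta(u_i-u_j)=0$ across the interface from the two $\mathcal S$-inequalities, and handling the origin via a removable-singularity/capacity argument.
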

	\begin{proof}
		Without loss of generality we can suppose that 
		$$\Omega_i^u\cap \partial B_1\neq \emptyset\quad\text{for every}\quad i=1,\dots,N.$$			
		We define the function $\sigma$ as follows. 
		We set $\sigma(1)=1$ and we fix a point $x_0\in \Omega_1^u\cap B_r$. For every point $x_1\in\partial B_1\setminus\mathcal F(u)$ we take a curve $\ell\in \mathcal C(x_0\to x_1)$ (where $\mathcal C(x_0\to x_1)$ is as in \cref{def:family-of-curves} with $\mathcal M=\partial B_1$ and $\mathcal F=\mathcal F(u)$) and we define
		$$\sigma(x_1):=(-1)^{M(\ell)},$$
		$M(\ell)$ being the number of times $\ell$ crosses $\mathcal F(u)$:
		\begin{equation}\label{e:definition-of-M}
			M(\ell):=\sharp\Big\{t\in[0,1]\ :\ \ell(t)\in\mathcal F(u)\Big\}.
		\end{equation}
		By \cref{l:topological-lemma} (applied to the sphere $\mathcal M=\partial B_1$ and $\mathcal F=\mathcal F(u)$) the value of $\sigma(x_1)$ does not depend on the choice of the curve $\ell$. In order to prove the harmonicity of $\sigma\cdot u$ we notice that if $B_r(x_0)$ is a ball such that $\mathcal F(u)\cap B_r(x_0)=\partial\Omega_i\cap \partial\Omega_j\cap B_r(x_0)$ is a smooth graph in $B_r(x_0)$, then for every pair of points $x_i\in\Omega_i\cap B_r(x_0)$ and $x_j\in\Omega_j\cap B_r(x_0)$, we can find a curve in $\mathcal C(x_i\to x_j)$ that crosses $\mathcal F(u)$ only once. Thus, by construction  $\sigma(x_i)=-\sigma(x_j)$
		and so the function $\sigma\cdot u$ is harmonic across $\mathcal F(u)$. This concludes the proof of \cref{l:optimal-gap-main-lemma}.
	\end{proof}
	
	\begin{proposition}\label{prop:32-homogeneous-classification}
	Let $u\in \mathcal S_{\sfrac32}(\R^d;N)$. Then, up to relabeling the components of $u$, we have that $u_j\equiv 0$ for $j\ge 4$, and, up to a rotation of the coordinate system, $(u_1,u_2,u_3)$ is of the form $(u_1,u_2,u_3)=cY$ for some constant $c\in\R$ and $Y=(Y_1,Y_2,Y_3)$ as in \eqref{eq:def-Y}.
	\end{proposition}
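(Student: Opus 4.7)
The plan is a two-step reduction: first lower the problem to dimension two by a Federer-type dimension reduction, then perform an explicit ODE analysis on $S^1$.

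\emph{Step 1 (reduction to a $2$-dimensional profile).} Since $u$ is $\sfrac{3}{2}$-homogeneous, its nodal set $\mathcal F(u)$ is a cone from the origin. Combining the Hausdorff dimension bound $\dim_{\mathcal H}\mathrm{Sing}(u)\le d-2$ with the Federer-type dimension reduction carried out in the proof of \cite[Lemma 4.2]{ST-opt-gap}, one shows that $u$ is invariant under translations along a $(d-2)$-dimensional subspace. Up to a rotation this reads $u(x)=u(x_{d-1},x_d)$, so it suffices to classify $\sfrac{3}{2}$-homogeneous elements of $\mathcal S(\R^2;N)$.

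\emph{Step 2 (ODE analysis on $S^1$).} In $\R^2$ write $u(r,\theta)=r^{\sfrac{3}{2}}\varphi(\theta)$ with $\varphi\colon S^1\to\Sigma_N$. Each non-trivial component $\varphi_i\ge 0$ is supported on a disjoint union of open arcs of $S^1$, and on each such arc $I$ it satisfies $-\varphi_i''=\sfrac{9}{4}\varphi_i$ with Dirichlet data and $\varphi_i>0$ inside. Hence $\varphi_i|_I$ is a positive first Dirichlet eigenfunction, whose eigenvalue $(\pi/|I|)^2$ must equal $\sfrac{9}{4}$, forcing $|I|=\sfrac{2\pi}{3}$. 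Since the arcs partition $S^1$ up to a finite set, there are exactly three of them.

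\emph{Step 3 (identifying $u=cY$).} Suppose two of the three arcs belonged to the same component $\varphi_i$. On the $3$-cycle of arcs they would be adjacent, sharing a boundary ray $\rho$ along which $u_i$ vanishes but is strictly positive on both sides, with no other component present near $\rho$. A direct distributional computation then gives that $-\Delta u_i$ is a strictly negative measure on $\rho$ (the jump of $\partial_\nu u_i$ has definite sign), contradicting the differential inequality $-\Delta(u_i-\sum_{j\neq i}u_j)\ge 0$ built into the class $\mathcal S$. Hence the three arcs belong to three distinct components, so after relabeling $\varphi_j\equiv 0$ for $j\ge 4$ and, after rotation, the supports of $\varphi_1,\varphi_2,\varphi_3$ coincide with the sectors in \eqref{eq:def-Y}. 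Writing $\varphi_i=c_i\cos\bigl(\sfrac{3}{2}(\theta-\bar\theta_i)\bigr)$ on its sector and applying the two-sided inequality in the definition of $\mathcal S$ at each common ray forces matching normal derivatives between neighbouring components, so $c_1=c_2=c_3=:c$ and $u=cY$.

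\emph{Main obstacle.} The principal technical input is Step~1, which requires tracking the Almgren frequency and the stratification of the singular set in the class $\mathcal S$ rather than just in $\mathcal M$; once this is in place, Steps~2 and~3 are short first-eigenfunction-positivity and balance-law arguments at the free boundary rays.
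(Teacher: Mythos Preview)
Your strategy matches the paper's: reduce to $d=2$ by a Federer-type iteration, then classify the angular profile on $S^1$. One correction to your Step~1: the Hausdorff bound $\dim_{\mathcal H}\mathrm{Sing}(u)\le d-2$ is not the operative input for the reduction. What is actually used (and what the paper isolates as \cref{l:optimal-gap-main-lemma}, which is also the content of the reference you cite) is a topological parity argument: if every point of $\mathcal F(u)\cap\partial B_1$ were regular, one could assign signs so that $\sigma\cdot u$ is globally harmonic, forcing the homogeneity $\gamma$ to be an integer. Since $\sfrac32\notin\N$, there must exist a singular point $x_0\in\partial B_1$; upper semicontinuity of the frequency along the ray through $x_0$ together with the optimal gap $\gamma(u,x_0)\ge\sfrac32$ then gives $\gamma(u,x_0)=\sfrac32$, hence translation invariance along $x_0$, and one iterates. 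So your citation is right but your one-line summary misidentifies the mechanism.

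Your Steps~2--3 are correct and in fact more explicit than the paper, which simply records that the two-dimensional classification is immediate. Your use of the two differential inequalities in $\mathcal S$---first to rule out two adjacent arcs carrying the same component (via the sign of the Dirac contribution to $-\Delta u_i$ on the shared ray), then to force matching normal derivatives $c_1=c_2=c_3$ across each interface---is exactly the right argument.
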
	
	\begin{proof}
	 Let $u\in\mathcal{S}_{\sfrac{3}{2}}(\R^d;N)$. By \Cref{l:optimal-gap-main-lemma}, there is a point $x_0\in \left(\mathcal F(u)\setminus\mathcal F_1(u)\right)\cap\partial B_1$. But then, the upper semicontinuity of $\gamma(u,\cdot)$ and the homogeneity of $u$ give
	$$\frac32=\gamma(u,0)\ge \lim_{t\to+\infty}\gamma(u,tx_0)=\gamma(u,x_0).$$
	Since, by \cite{ST-opt-gap}, $\sfrac32$ is the minimal frequency, we get that 
	$$\frac32=\gamma(u,0)=\gamma(u,x_0),$$ 
which implies that $u$ is invariant in the direction of $x_0$. Up to a rotation of the coordinate system, we can suppose that $x_0=\vec{\bm{e}}_1$. Then $\widetilde u(x_2,\dots, x_d):=u(0,x_2,\dots, x_d)$ is a $\sfrac{3}{2}$-homogeneous global solution in $\R^{d-1}$, i.e. it belongs to $\mathcal{S}_{\sfrac{3}{2}}(\R^{d-1};N)$. By repeating this argument in every dimension $d-k$, $k\ge 1$, we finally get that, up to a rotation of the coordinate axes, we have
	$$u(x_1,\dots, x_{d-1},x_d)=w(x_{d-1},x_d),$$ 
	where $w$ is a $\sfrac{3}{2}$-homogeneous solution in dimension 2. This gives the specific form of $w$ and the fact that $w$ has exactly three non-zero components.	
\end{proof}	

We notice that as a consequence of \Cref{prop:32-homogeneous-classification}, we obtain the uniqueness of the minimizer (in any dimension) to the optimal 3-partition of the sphere for the min-max problem studied in \cite{CTVFucick2005, HHOT09,HHOT10,ST-opt-gap}.
\begin{corollary}\label{cor:min-max}
The $Y$-configuration is the unique minimizer of
\begin{equation*}
		\mathcal L_3:=	\min\left\{ \max_{i\in\{1,2,3\}}\lambda_1(\omega_i)\colon \omega_i\sub\partial B_1~\text{open, connected and s.t. }\omega_i\cap\omega_j=\emptyset ~\text{for }i\neq j \right\},
\end{equation*}
where $\lambda_1(\omega_i)$ is the first eigenvalue of the spherical Dirichlet Laplacian on $\omega_i$.
\end{corollary}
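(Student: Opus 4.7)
The plan is to interpret every admissible $3$-partition as the sphere-trace of a segregated homogeneous harmonic map and then apply \Cref{prop:32-homogeneous-classification} to force the $Y$-shape. Concretely, to a minimizer $(\omega_1,\omega_2,\omega_3)$ of $\mathcal{L}_3$ I would associate a $\gamma$-homogeneous element $U\in\mathcal{S}_\gamma(\R^d;3)$ whose nodal pattern on $\partial B_1$ is exactly $(\omega_1,\omega_2,\omega_3)$, with $\gamma$ determined by $\gamma(\gamma+d-2)=\max_i\lambda_1(\omega_i)$.

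The first step is the \emph{equalization property} at a minimizer: all three first Dirichlet eigenvalues coincide, $\lambda_1(\omega_1)=\lambda_1(\omega_2)=\lambda_1(\omega_3)=\mathcal{L}_3$. This is a classical consequence of the min-max structure (see e.g. \cite{HHOT10}): if $\lambda_1(\omega_k)<\mathcal{L}_3=\lambda_1(\omega_j)$, one can transfer a thin neighborhood of $\partial\omega_j\cap\partial\omega_k$ from $\omega_j$ to $\omega_k$; by strict domain monotonicity of $\lambda_1$ this would strictly lower $\lambda_1(\omega_j)$ while keeping $\lambda_1(\omega_k)$ below the maximum, contradicting optimality. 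With $\gamma>0$ defined by $\gamma(\gamma+d-2)=\mathcal{L}_3$ and $\phi_i$ the positive $L^2$-normalized first eigenfunction on $\omega_i$, I would set $U_i(x):=|x|^\gamma\phi_i(x/|x|)$ on the cone over $\omega_i$ and zero elsewhere. Each $U_i$ is then $\gamma$-homogeneous, non-negative, continuous and harmonic in its open support $\{U_i>0\}$.

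The main obstacle is to check that $U=(U_1,U_2,U_3)$ belongs to $\mathcal{S}_\gamma(\R^d;3)$. The inequality $-\Delta U_i\leq 0$ is automatic, while the dual inequality $-\Delta(U_i-\sum_{j\neq i}U_j)\geq 0$ encodes the stationarity of $(\omega_1,\omega_2,\omega_3)$ at the common interfaces: at any regular free-boundary point $x\in\partial\omega_i\cap\partial\omega_j$ it translates into the balance $|\nabla U_i(x)|=|\nabla U_j(x)|$ of the outward normal derivatives, which is the standard first-order optimality condition for min-max partitions and is proved in \cite{CTVFucick2005,HHOT10,ST-opt-gap}. This bridges the optimal-partition viewpoint and the segregation framework set up in \Cref{sec:intro-notations}.

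Once $U\in\mathcal{S}_\gamma(\R^d;3)$ is established, the conclusion is immediate. By the optimal frequency gap of \cite{ST-opt-gap}, any non-trivial element of $\mathcal{S}(\R^d;N)$ has frequency at least $\sfrac{3}{2}$, so $\gamma\geq \sfrac{3}{2}$ and hence $\mathcal{L}_3\geq \sfrac{3}{2}(\sfrac{3}{2}+d-2)$. The $Y$-profile from \eqref{eq:def-Y}, extended trivially in the variables $x_1,\dots,x_{d-2}$, is an explicit admissible competitor saturating this bound, which forces $\gamma=\sfrac{3}{2}$. Then \Cref{prop:32-homogeneous-classification} applied to $U$ yields, up to a rotation of $\R^d$ and a permutation of the indices, $U=cY$ for some $c>0$; in particular $(\omega_1,\omega_2,\omega_3)$ coincides on $\partial B_1$ with the $Y$-configuration, as required.
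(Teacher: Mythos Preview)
Your overall strategy is exactly the paper's: attach to a minimizer $(\omega_1,\omega_2,\omega_3)$ a homogeneous element of $\mathcal{S}_{\sfrac32}(\R^d;3)$ and invoke \Cref{prop:32-homogeneous-classification}. The gap is in how you build that element. You take the $L^2$-normalized first eigenfunctions $\phi_i$ and set $U_i=|x|^\gamma\phi_i$, then claim that the optimality condition gives $|\nabla U_i|=|\nabla U_j|$ on the interfaces. But the first-order optimality conditions for min-max partitions in the references you cite (and in \cite{HHOT09}, which is what the paper uses) do \emph{not} say this for the $L^2$-normalized eigenfunctions. What they say is that there exist constants $a_i\ge 0$, not all zero, such that $\psi=(a_1\phi_1,a_2\phi_2,a_3\phi_3)$ satisfies the segregation differential inequalities on $\partial B_1$; equivalently, the normal derivatives of the \emph{weighted} functions $a_i\phi_i$ match. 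With your choice of normalization, $U$ need not belong to $\mathcal{S}_\gamma(\R^d;3)$.

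There is a second, related point you skip: once one uses the correct weighted vector $\psi$, one must still check that all $a_i$ are strictly positive, so that $\psi$ is genuinely a three-phase configuration and its homogeneous extension has $\{U_i>0\}\cap\partial B_1=\omega_i$. The paper does this by a short unique continuation argument: if, say, $a_3=0$, then $a_1\phi_1-a_2\phi_2$ would be a global spherical eigenfunction, forcing its zero set to have zero $\mathcal{H}^{d-1}$-measure, which contradicts $\mathcal{H}^{d-1}(\omega_3)>0$. Without this step you cannot conclude that the partition you recover from \Cref{prop:32-homogeneous-classification} is the original $(\omega_1,\omega_2,\omega_3)$. Your equalization argument and your route to $\gamma=\sfrac32$ via the frequency gap plus the explicit $Y$-competitor are fine (and slightly more self-contained than the paper's direct citation), but the two issues above need to be fixed.
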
	
\begin{proof}
Let $(\omega_1,\omega_2,\omega_3)$ be a triple achieving the minimum (see for instance \cite{HHOT09}). We know that $\omega_i\subset\partial B_1$ is measurable and $\mathcal H^{d-1}(\omega_i)>0$, for every $i$. 	Moreover, if $\phi_i$ is a first normalized  eigenfunction on $\omega_i$, then by \cite[Theorem 3.4]{HHOT09}, there are constants $a_i\ge 0$, $i=1,2,3$, such that $\psi:=(a_1\phi_1,a_2\phi_2,a_3\phi_3)$ satisfies
	\begin{equation*}
	\begin{bvp}
		-\Delta \psi_i &\leq 	\mathcal L_3\psi_i,&&\text{in }\partial B_1, \\
		-\Delta \Big(\psi_i-\sum_{j\neq i}\psi_j\Big)&\geq 	\mathcal L_3\Big(\psi_i-\sum_{j\neq i}\psi_j\Big), &&\text{in }\partial B_1.
	\end{bvp}
\end{equation*} 
By \cite[Theorem 3.4]{HHOT09} that $(a_1,a_2,a_3)\neq(0,0,0)$. We now claim that $a_i>0$ for all $i$. Suppose by contradiction that $a_3=0$, so that also $\psi_3=0$. By, the equation above, we get that $-\Delta (\psi_1-\psi_2)=\mathcal L_3\,(\psi_1-\psi_2)$ on $\partial B_1$. By the unique continuation principle for eigenfunctions, we get that $\mathcal H^{d-1}(\{\psi_1-\psi_2=0\})=0$, which leads to a contradiction since $\mathcal H^{d-1}(\omega_3)>0$. Now, in view of  \cite[Remark 3.14 (d)]{HHOT09} and \cite[Theorem 1.10]{ST-opt-gap}, we know that $\mathcal L_3=\lambda_1(\omega_i)=\frac32\left(\frac32+d-2\right)$ for all $i$. Thus, the $\sfrac32$ homogeneous extension $u$ of $\psi$ belongs to the class $\mathcal S_{\sfrac32}(\R^d;3)$, so the conclusion follows from \Cref{prop:32-homogeneous-classification}.
\end{proof}

	\section{Linearization around triple junctions}\label{sec:linearized}
	We now introduce the linearized problem at points of frequency $\sfrac{3}{2}$, which plays a crucial role in the proof of the epiperimetric inequality.
	For $i=1,2,3$, we set 
	\begin{equation*}
		\Omega_i:=\left\{(r\cos\theta,r\sin\theta)\colon r> 0~\text{and } \theta\in\left(-\pi+(i-1)\frac{2\pi}{3},-\pi+i\frac{2\pi}{3}\right)\right\},
	\end{equation*}
	so that $\R^2$ is the disjoint union 
	$$\R^2=\Omega_1\cup\Omega_2\cup\Omega_3\cup \{Y=0\}.$$
	Moreover, in $\R^d$ we may identify $\Omega_i$ with its cylindrical extension $\R^{d-2}\times\Omega_i$. 
	Now, the linearized problem around the triple junction solution $Y=(Y_1,Y_2,Y_3)\in\mathcal{S}_{\sfrac{3}{2}}(\R^2;3)$, which we recall to be defined as
	\begin{equation*}
		Y_i(r,\theta)=\begin{cases}
			r^{\frac{3}{2}}\abs{\cos\left(\frac{3}{2}\theta\right)},&\text{for }-\pi+\frac{2\pi}{3}(i-1)\leq \theta\leq -\pi+\frac{2\pi}{3}i, \\
			0, &\text{elsewhere},
		\end{cases}
	\end{equation*}
	is the following:
	\begin{equation}\label{eq:linearized}
		\begin{bvp}
			-\Delta w_k&=0, &&\text{in }\Omega_k~\text{for every }k\in\{1,2,3\},\\
			w_i&=-w_j,&&\text{on }\partial\Omega_i\cap\partial\Omega_j,~ \text{for every }i\neq j\in\{1,2,3\},\\
			\nabla w_i&=-\nabla w_j&&\text{on } \partial\Omega_i\cap\partial\Omega_j,~ \text{for every } i\neq j\in\{1,2,3\}.
		\end{bvp}
	\end{equation}
	More precisely, we say that $w=(w_1,w_2,w_3)$ is a solution of \eqref{eq:linearized} if 
	\begin{equation*}
		\begin{cases}
			w_i-w_j\in H^1_{\tu{loc}}(\Omega_{ij}), \\
			-\Delta(w_i-w_j)=0\quad\text{in }\Omega_{ij},
		\end{cases}
	\end{equation*}
	where
	\begin{equation*}
		\Omega_{ij}:=\Omega_i\cup\Omega_j\cup\left(\partial\Omega_i\cap\partial\Omega_j\right)\setminus\{0\}
	\end{equation*}
	for all $i, j\in\{1,2,3\}$, $i\neq j$.
	
	We remark that the linearized problem can also be rephrased with the cut space as domain, drawing a parallel with the thin obstacle problem. Namely, if we let
	\begin{equation*}
		\mathcal{P}:=\{(x'',x_{d-1},x_d)\in\R^d\colon x_{d-1}\leq 0,~x_d=0\}
	\end{equation*}
	and $\widetilde{w}:=-w_1+w_2-w_3$, then we have the following:
	\begin{itemize}
		\item $\widetilde{w}\in H^1_\tu{loc}(\R^d\setminus\mathcal{P})$ solves
		\begin{equation*}
			\begin{bvp}
				-\Delta w&=0, &&\text{in }\R^d\setminus\mathcal{P}, \\
				\lim_{x_d\to 0^+}w+\lim_{x_d\to 0^-}w&=0, &&\text{on }\mathcal{P}, \\
				\lim_{x_d\to 0^+}\partial_{x_d}w+\lim_{x_d\to 0^-}\partial_{x_d} w&=0, &&\text{on }\mathcal{P};
			\end{bvp}
		\end{equation*}
		\item we can write $\widetilde{w}=w_\tu{e}+w_{\tu{o}}$, where $w_{\tu{e}}$ and $w_{\tu{o}}$ are, respectively, even and odd with respect to $x_d$:
		\begin{equation*}
			w_{\tu{e}}(x',x_d):=\frac{w(x',x_d)+w(x',-x_d)}{2},\qquad			w_{\tu{o}}(x',x_d):=\frac{w(x',x_d)-w(x',-x_d)}{2};
		\end{equation*}
		\item the even part $w_{\tu{e}}$ solves
		\begin{equation*}
			\begin{bvp}
				-\Delta w_{\tu{e}}&=0, &&\text{in }\R^d\setminus\mathcal{P}, \\
				w_{\tu{e}}&=0, &&\text{on }\mathcal{P};
			\end{bvp}
		\end{equation*}
		\item the odd part $w_{\tu{o}}$ solves
		\begin{equation*}
			\begin{bvp}
				-\Delta w_{\tu{o}}&=0, &&\text{in }\R^d\setminus\mathcal{P}, \\
				\partial_{x_d}w_{\tu{o}}&=0, &&\text{on }\mathcal{P}.
			\end{bvp}
		\end{equation*}
	\end{itemize}
	A key observation is that, if we consider the even extension of the restriction of $w_\tu{o}$ to $\R^d_+$ and exchange $x_{d-1}$ with $-x_{d-1}$, that is we let
	\begin{equation*}
		\bar{w}(x'',x_{d-1},x_d):=\begin{cases}
			w_\tu{o}(x'',-x_{d-1},x_d),&\text{for }x_d>0, \\
			w_\tu{o}(x'',-x_{d-1},-x_d),&\text{for }x_d\leq 0, 
		\end{cases}
	\end{equation*}
	then one can see that $\bar{w}$ solves the same equation as the even part $w_\tu{e}$, namely
	\begin{equation*}
		\begin{bvp}
			-\Delta\bar{w}&=0, &&\text{in }\R^d\setminus\mathcal{P}, \\
			\bar{w}&=0, &&\text{on }\mathcal{P}.
		\end{bvp}
	\end{equation*}
	On the other hand, this coincides with the linearized problem of the thin obstacle problem and so, for what concerns the classification of $\sfrac{3}{2}$-homogeneous solutions, we can appeal to the known results in this setting (see e.g. \cite{GPGthin,FSthin}). Therefore, we proved the following.
	\begin{proposition}\label{prop:classification}
		Let $\widetilde{w}\in H^1_{\textup{loc}}(\R^d\setminus\mathcal{P})$ be a $\sfrac{3}{2}$-homogeneous solution of 
		\begin{equation*}
			\begin{bvp}
				-\Delta w&=0, &&\text{in }\R^d\setminus\mathcal{P}, \\
				\lim_{x_d\to 0^+}w+\lim_{x_d\to 0^-}w&=0, &&\text{on }\mathcal{P}, \\
				\lim_{x_d\to 0^+}\partial_{x_d}w+\lim_{x_d\to 0^-}\partial_{x_d} w&=0, &&\text{on }\mathcal{P}.
			\end{bvp}
		\end{equation*}
		Then $\widetilde{w}=w_{\tu{e}}+w_{\tu{o}}$, where $w_{\tu{e}}$ is even with respect to $x_d$ and takes the form
		\begin{equation*}
			w_{\tu{e}}=a\widetilde{Y}(x_{d-1},x_d)+U_0(x_{d-1},x_d)\sum_{i=1}^{d-2}a_i x_i
		\end{equation*}
		and $w_{\tu{o}}$ is odd with respect to $x_d$ and takes the form
		\begin{equation*}
			w_{\tu{o}}=b\widetilde{Y}(-x_{d-1},x_d)+U_0(-x_{d-1},x_d)\sum_{i=1}^{d-2}b_i x_i\quad\text{for }x_d>0,
		\end{equation*}
		where
		\begin{equation*}
			\widetilde{Y}(r,\theta)=r^{\frac{3}{2}}\cos\left(\frac{3}{2}\theta\right)=-Y_1+Y_2-Y_3\quad\text{and}\quad 			U_0(r,\theta)=r^{\frac{1}{2}}\cos\left(\frac{\theta}{2}\right).
		\end{equation*}
		In particular, $w_{\tu{e}}$ and $w_{\tu{o}}$ are $\sfrac{3}{2}$-homogeneous solutions of
		\begin{equation*}
			\begin{bvp}
				-\Delta w_{\tu{e}}&=0, &&\text{in }\R^d\setminus\mathcal{P}, \\
				w_{\tu{e}}&=0, &&\text{on }\mathcal{P}
			\end{bvp}
			\qquad\text{and} \qquad
			\begin{bvp}
				-\Delta w_{\tu{o}}&=0, &&\text{in }\R^d\setminus\mathcal{P}, \\
				\partial_{x_d}w_{\tu{o}}&=0, &&\text{on }\mathcal{P}.
			\end{bvp}
		\end{equation*}
	\end{proposition}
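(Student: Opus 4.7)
The plan is to use the even/odd decomposition already set up in the paragraphs above to reduce the problem to two independent thin Dirichlet classifications, and then to quote the known $\sfrac{3}{2}$-homogeneous classification for the thin obstacle problem from \cite{GPGthin,FSthin}.

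First, I would define $w_{\tu{e}}$ and $w_{\tu{o}}$ as in the preceding discussion. Both are $\sfrac{3}{2}$-homogeneous (since $\widehat{w}$ is, and the decomposition is linear in $\widehat{w}$) and harmonic away from $\mathcal{P}$. Using the two boundary conditions from the statement one checks:
\begin{align*}
w_{\tu{e}}(x',0)&=\tfrac{1}{2}\bigl(\lim_{x_d\to 0^+}\widehat{w}+\lim_{x_d\to 0^-}\widehat{w}\bigr)=0\quad\text{on }\mathcal{P},\\
\lim_{x_d\to 0^+}\partial_{x_d}w_{\tu{o}}&=\tfrac{1}{2}\bigl(\lim_{x_d\to 0^+}\partial_{x_d}\widehat{w}+\lim_{x_d\to 0^-}\partial_{x_d}\widehat{w}\bigr)=0\quad\text{on }\mathcal{P},
\end{align*}
so $w_{\tu{e}}$ solves the thin Dirichlet problem and $w_{\tu{o}}$ the thin Neumann problem, as claimed.

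Next, for $w_{\tu{e}}$ I would apply the classification of $\sfrac{3}{2}$-homogeneous solutions of the thin Dirichlet problem from \cite{GPGthin,FSthin}: the linear space of such solutions is $(d-1)$-dimensional, spanned by the planar mode $\widehat{Y}(x_{d-1},x_d)=r^{3/2}\cos(\sfrac{3}{2}\theta)$ and by the $d-2$ products $U_0(x_{d-1},x_d)\,x_i$ with $i=1,\dots,d-2$. One verifies directly that these are all harmonic off $\mathcal{P}$ (since $U_0$ is harmonic in the thin sense in the $(x_{d-1},x_d)$-plane and $x_i$ is harmonic with $\partial_i U_0\equiv 0$ for $i\leq d-2$) and vanish on $\mathcal{P}$. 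This yields the claimed expression for $w_{\tu{e}}$.

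Finally, for $w_{\tu{o}}$ I would use the reduction described in the text: on the upper half-space $\{x_d>0\}$ define $\widetilde{w}(x'',x_{d-1},x_d):=w_{\tu{o}}(x'',-x_{d-1},x_d)$, and extend it evenly across $\{x_d=0\}$. Since $w_{\tu{o}}$ vanishes on $\{x_d=0\}$ (being odd), the even extension is continuous, and since $\partial_{x_d}w_{\tu{o}}=0$ on $\{x_d=0,\;x_{d-1}\leq 0\}$, the normal derivative of the even extension matches across the complementary half of the hyperplane, so $\widetilde{w}$ is harmonic in $\R^d\setminus\mathcal{P}$ after the flip $x_{d-1}\mapsto -x_{d-1}$; moreover $\widetilde{w}=0$ on $\mathcal{P}$. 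The same thin Dirichlet classification then gives $\widetilde{w}=b\widehat{Y}(x_{d-1},x_d)+U_0(x_{d-1},x_d)\sum b_i x_i$, and undoing the flip $x_{d-1}\mapsto -x_{d-1}$ gives the stated expression for $w_{\tu{o}}$ on $\{x_d>0\}$, which is then extended to $\{x_d<0\}$ by oddness. The main non-trivial input is the thin-obstacle classification itself, which is quoted; everything else reduces to the symmetry manipulations already outlined before the statement.
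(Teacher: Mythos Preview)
Your proposal is correct and follows exactly the paper's approach: the paper's proof of this proposition is the discussion immediately preceding the statement, which performs the same even/odd decomposition, reduces $w_{\tu{o}}$ to the thin Dirichlet problem via the even-extension-plus-flip map $\widetilde{w}$, and then appeals to the thin obstacle classification from \cite{GPGthin,FSthin}. You have simply filled in a few verification details (the boundary conditions for $w_{\tu{e}}$, $w_{\tu{o}}$ and the harmonicity of $\widetilde{w}$ across the hyperplane) that the paper leaves implicit.
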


	\section{Epiperimetric inequality at points of frequency $\sfrac32$}\label{sec:proof-of-epi}
	In this section we prove \cref{t:epi}; we start with the following general lemma, which will be useful in the construction of the competitors in Step 1 and Step 4 of the proof.
    
	\begin{lemma}[Distance on $\Sigma_N$]\label{lemma:dist}
		For any $u,v\in H^1(B_1;\Sigma_N)\cap C^{0,1}(B_1;\R^N)$ such that $u_i,v_i\geq 0$ in $B_1$ for all $i=1,\dots,N$, we have
		\begin{equation*}
			\d_{\Sigma_N}(u,v)=\Big|v_i-u_i+\sum_{j\neq i}u_j\Big|=\abs{v_i-\widehat{u}_i} \quad\text{in }\overline{\{v_i>0\}},
		\end{equation*}
            where
            \begin{equation}\label{eq:hat_op}
                \widehat{u}_i:=u_i-\sum_{j\neq i}u_j.
            \end{equation}
		Moreover, we also have
		\begin{equation*}
			\d_{\Sigma_N}(u,v)=\sum_{j=1}^N|u_j-v_j|\quad\text{in }B_1.
		\end{equation*}
	\end{lemma}
	\begin{proof}
		Let $x\in\overline{\{v_i>0\}}$. If $x\in \{u_i>0\}$, then 
		$$\d_{\Sigma_N}(u,v)(x)=|v_i(x)-u_i(x)|=\Big|v_i(x)-u_i(x)+\sum_{j\neq i}u_j\Big|=\sum_{j=1}^N|v_j(x)-u_j(x)|,$$
		while, if $x\in \overline{\{u_k>0\}}$, with $k\neq i$, then $$\d_{\Sigma_N}(u,v)(x)=v_i(x)+u_k(x)=\Big|v_i(x)-u_i(x)+\sum_{j\neq i}u_j(x)\Big|=\sum_{j=1}^N|v_j(x)-u_j(x)|.$$
		If $u_k(x)=0$ for all $k=1,\dots,N$ the statement is trivial.
	\end{proof}

    The following lemma contains a general integration by parts identity for tangent maps. 
    
	\begin{lemma}\label{lemma:int_by_parts}
		Suppose that $v\in \mathcal{S}_\gamma(\R^d;N)$. Then, for every $u\in C^{0,1}(B_1)$ and every $i=1,\dots,N$, we have that
		$$\int_{B_1\cap\{v_i>0\}}\nabla v_i\cdot\nabla u\dx=\gamma\int_{\partial B_1\cap\{v_i>0\}}v_i u\ds+\int_{B_1\cap\partial\{v_i>0\}}u\partial_{\nnu}v_i \ds,$$
		where $\nu$ is the normal to $\mathrm{Reg}(v)\cap\partial\{v_i>0\}$ pointing outwards $\{v_i>0\}$.
	\end{lemma}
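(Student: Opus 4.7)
The plan is to apply the classical divergence theorem to $u\nabla v_i$ on the open set $\Omega_i^v:=\{v_i>0\}\cap B_1$, handling the free-boundary singularities of $\partial\Omega_i^v$ via a cutoff argument based on the dimension bound for $\mathrm{Sing}(v)$.

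First I would note that $v_i$ is harmonic in $\{v_i>0\}$: indeed, the disjointness condition in the definition of $\Sigma_N$ forces $v_j\equiv 0$ on $\{v_i>0\}$ for every $j\neq i$, so combining the two distributional inequalities from the definition of $\mathcal{S}(\R^d;N)$ yields $\Delta v_i=0$ there. Moreover $v$ is locally Lipschitz on $\R^d$ by Subsection~2.1, so $|\nabla v_i|\le L$ almost everywhere. By the regularity results recalled in the Introduction (Gromov--Schoen, Caffarelli--Lin, Alper, Dees), the free boundary $\partial\{v_i>0\}\cap B_1$ splits into the regular part $\mathrm{Reg}(v)\cap\partial\{v_i>0\}$, a smooth $(d-1)$-manifold on which $\partial_{\nnu}v_i$ is well-defined, and the singular part $\mathrm{Sing}(v)\cap\partial\{v_i>0\}$, which has Hausdorff dimension at most $d-2$ and is therefore $\mathcal{H}^{d-1}$-negligible.

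For every $\delta>0$ I can then cover $\mathrm{Sing}(v)\cap\overline{B_1}$ by countably many balls $B_{r_j}(x_j)$ with $\sum_j r_j^{d-1}<\delta$ and build a cutoff $\eta_\delta\in C^\infty_c(\R^d;[0,1])$ vanishing on $\bigcup_j B_{r_j}(x_j)$, equal to $1$ outside $\bigcup_j B_{2r_j}(x_j)$, and satisfying $\int_{\R^d}|\nabla\eta_\delta|\dx\lesssim\delta$. Applying the divergence theorem to the vector field $\eta_\delta u\nabla v_i$ on $\Omega_i^v$---on $\{\eta_\delta>0\}$ the boundary of $\Omega_i^v$ is a $C^1$ manifold---and using harmonicity of $v_i$ to kill the interior Laplacian term gives
\begin{equation*}
\int_{\Omega_i^v}\eta_\delta\,\nabla u\cdot\nabla v_i\dx+\int_{\Omega_i^v}u\,\nabla\eta_\delta\cdot\nabla v_i\dx=\int_{\partial B_1\cap\{v_i>0\}}\eta_\delta\, u\,\partial_{\nnu}v_i\dS+\int_{\mathrm{Reg}(v)\cap\partial\{v_i>0\}\cap B_1}\eta_\delta\, u\,\partial_{\nnu}v_i\dS.
\end{equation*}
Passing to the limit $\delta\to 0^+$, the second term on the left vanishes since $|u\,\nabla v_i|$ is bounded while $\int|\nabla\eta_\delta|\dx\to0$, and the remaining terms converge by dominated convergence. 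On $\partial B_1$ the outward normal is radial, so $\gamma$-homogeneity of $v_i$ yields $\partial_{\nnu}v_i=\partial_r v_i=\gamma v_i$, producing the stated identity. The main difficulty is precisely this cutoff step: ensuring that the singular part of the free boundary carries no contribution, which is exactly what the dimension bound $\dim_{\mathcal H}\mathrm{Sing}(v)\le d-2$ provides.
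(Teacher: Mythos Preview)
Your proposal is correct and follows essentially the same approach as the paper's proof: cover $\mathrm{Sing}(v)\cap\overline{B_1}$ by balls with small total $(d-1)$-dimensional content, build a cutoff vanishing near the singular set, integrate by parts on the regular portion (using harmonicity of $v_i$ in $\{v_i>0\}$ and the homogeneity relation $\partial_r v_i=\gamma v_i$ on $\partial B_1$), and estimate the error terms via the Lipschitz bound on $v$ and $u$. The only cosmetic differences are that the paper invokes \cite{TerraciniTavares2012} (which covers the class $\mathcal{S}$, not just $\mathcal{M}$) for the fact that $\mathcal{H}^{d-1}(\mathrm{Sing}(v))=0$, and uses a finite cover by compactness rather than a countable one.
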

	\begin{proof}
		Before starting, we first recall that we know some basic structure of the free boundary of $v$ (see \Cref{sec:free_boundary_reg}); more precisely, from \cite{TerraciniTavares2012} we know that $v\in C^{0,1}(B_1;\R^N)$ and that we can decompose
		\begin{equation*}
			\mathcal{F}(v)=\bigcup_{i=1}^N \partial \{v_i>0\}=\mathrm{Reg}(v)\cup \mathrm{Sing}(v),
		\end{equation*}
		where $\mathrm{Reg}(v)$ is the union of $(d-1)$-dimensional manifolds of class $C^{1,\alpha}$ and $\mathrm{Sing}(v)$ has zero $\mathcal{H}^{d-1}$ measure. Moreover, from \eqref{eq:grad_interface_sing} and \eqref{eq:grad_interface}, we have
		\begin{equation*}
			|\nabla v_i|=|\nabla v_j|\quad\text{on }\mathrm{Reg}(v)\qquad\text{and}\quad |\nabla v_i|=0\quad\text{on }\mathrm{Sing}(v).
		\end{equation*}
		Hence, for every $\eps>0$ we can find a finite family of balls $B_{r_k}(x_k)$ such that:
		$${\rm Sing}(v)\cap \overline B_1\subset\bigcup_kB_{r_k}(x_k)\qquad\text{and}\qquad \sum_kr_k^{d-1}\le \eps.$$  
		For every ball we consider a smooth function $\phi_k\in C^\infty_c(\R^d)$ such that: 
		$$\phi_k\equiv 1\ \text{ on }\ B_{r_k}(x_k),\qquad \phi_k\equiv 0\ \text{ on }\ \R^d\setminus B_{2r_k}(x_k),\qquad |\nabla \phi_k|\le 2r_k^{-1}.$$
		and we set $\phi=\phi_\eps:=\sup_k\phi_k$. Then, since $u(1-\phi)$ is identically zero in a neighborhood of $\mathrm{Sing}(v)$, we have that 
		$$\int_{B_1\cap\{v_i>0\}}\nabla \big(u(1-\phi)\big)\cdot\nabla v_i=\gamma\int_{\partial B_1\cap\{v_i>0\}}(1-\phi)v_i u\dS+\int_{B_1\cap\partial\{v_i>0\}}(1-\phi)u\partial_{\nnu}v_i \dS.$$
		Thus, in order to conclude the proof, it is sufficient to estimate the error term
		$$e:=\int_{B_1\cap\{v_i>0\}}(\phi\nabla u+u\nabla\phi)\cdot\nabla v_i+\gamma\int_{\partial B_1\cap\{v_i>0\}}\phi v_i u\dS+\int_{B_1\cap\partial\{v_i>0\}}\phi u\partial_{\nnu}v_i \dS.$$
		By the definition of $\phi$ we have that 
		\begin{align*}
			|e|&\le\sum_{k}\Big(\int_{B_{2r_k}(x_k)\cap B_1\cap \{v_i>0\}}|\phi_k||\nabla u||\nabla v_i|+\int_{B_{2r_k}(x_k)\cap B_1\cap \{v_i>0\}}|\nabla\phi_k||u||\nabla v_i|\\
			&\qquad+\gamma\int_{B_{2r_k}(x_k)\cap\partial B_1\cap\{v_i>0\}}|\phi_k| |v_i| |u|\dS+\int_{B_{2r_k}(x_k)\cap B_1\cap\partial\{v_i>0\}}|\phi_k| |u||\nabla v_i| \dS\Big) \\
			&\le C\sum_{k}\Big(r_k^{d}+r_k^dr_k^{-1}+\gamma r_k^{d-1}+r_k^{d-1}\Big)\le C\sum_{k}r_k^{d-1}\le C\eps, 
		\end{align*}
		where the constant $C$ depends on $d$, $\|v\|_{L^\infty}$, $\|\nabla v\|_{L^\infty}$, $\|u\|_{L^\infty}$, and $\|\nabla u\|_{L^\infty}$.
	\end{proof}

    The next lemma is the Weiss-energy linearization identity which is a key element in the argument in the proof of \cref{t:epi}.
    
	\begin{lemma}[Weiss-energy linearization identity]\label{lemma:weiss_diff}
		For any $u\in H^1(B_1;\Sigma_N)$ and $\gamma\geq 0$, let
		\begin{equation*}
			W_\gamma(u):=\sum_{i=1}^N\int_{B_1}|\nabla u_i|^2\dx-\gamma\sum_{i=1}^N\int_{\partial B_1}u_i^2\ds.
		\end{equation*}
		Then, for any $u\in H^1(B_1;\Sigma_N)\cap C^{0,1}(B_1;\R^N)$ and $v\in\mathcal{S}_\gamma(\R^d;N)$ such that $u_i,v_i\geq 0$ for all $i=1,\dots,N$, there holds
		\begin{equation*}
			W_{\gamma}(u)=W_{\gamma}(\d_{\Sigma_N}(u,v))+\beta(u,v),
		\end{equation*}
		where  
		\begin{equation}\label{eq:beta}
			\beta(u,v):=4\sum_{1\le i<j\le N}\int_{B_1\cap\partial\{v_i>0\}\cap\partial\{v_j>0\}}|\nabla v_i|\Big(\sum_{k\neq i,j}u_k\Big)\ds.
		\end{equation}	
	\end{lemma}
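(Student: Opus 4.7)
The natural strategy is to carry out all computations phase by phase, exploiting that the sets $\{v_i>0\}$ are pairwise disjoint and cover $B_1$ up to the measure-zero set $\mathcal{F}(v)$. On $\overline{\{v_i>0\}}$, \cref{lemma:dist} gives $\d_{\Sigma_N}(u,v)=|\phi_i|$ where $\phi_i:=v_i-u_i+\sum_{j\neq i}u_j$ is a globally Lipschitz function on $B_1$. The key is to evaluate $W_\gamma(u)-W_\gamma(\d_{\Sigma_N}(u,v))$ by comparing integrands pointwise on each $\{v_i>0\}$ and then recombining the boundary contributions on the interfaces.

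Expanding the square and using $u_iu_j=0$ a.e. (and hence $\nabla u_i\cdot\nabla u_j=0$ a.e.), one finds on $\{v_i>0\}$ the pointwise identities
\begin{align*}
\sum_j u_j^2-|\phi_i|^2 &= v_i^2-2v_i\phi_i, \\
\sum_j|\nabla u_j|^2-|\nabla \phi_i|^2 &= |\nabla v_i|^2-2\nabla v_i\cdot\nabla\phi_i.
\end{align*}
Summing over $i$ and integrating, and using the Pohozaev-type identity for the $\gamma$-homogeneous harmonic function $v_i$ on $\{v_i>0\}$ (namely $\int_{B_1\cap\{v_i>0\}}|\nabla v_i|^2\dx=\gamma\int_{\partial B_1\cap\{v_i>0\}}v_i^2\dS$), the pure $v_i$-terms cancel and we obtain
\begin{equation*}
W_\gamma(u)-W_\gamma(\d_{\Sigma_N}(u,v))=-2\sum_{i=1}^N\int_{B_1\cap\{v_i>0\}}\nabla v_i\cdot\nabla\phi_i\dx+2\gamma\sum_{i=1}^N\int_{\partial B_1\cap\{v_i>0\}}v_i\phi_i\dS.
\end{equation*}

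Applying \cref{lemma:int_by_parts} with test function $\phi_i$ to each $v_i$ cancels the spherical boundary term $2\gamma\int_{\partial B_1\cap\{v_i>0\}}v_i\phi_i\dS$ and converts the bulk integrals into a sum over the free boundary:
\begin{equation*}
W_\gamma(u)-W_\gamma(\d_{\Sigma_N}(u,v))=-2\sum_{i=1}^N\int_{B_1\cap\partial\{v_i>0\}}\phi_i\,\partial_{\nnu}v_i\dS,
\end{equation*}
where $\nnu$ is the outward unit normal to $\{v_i>0\}$, so that $\partial_{\nnu}v_i=-|\nabla v_i|$ on $\mathrm{Reg}(v)\cap\partial\{v_i>0\}$ (recall $v_i=0$ on that set).

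To recast this as $\beta(u,v)$, we use the decomposition of the regular part $\mathrm{Reg}(v)\cap B_1$ as the a.e. disjoint union of the two-phase interfaces $\Gamma_{ij}:=\mathrm{Reg}(v)\cap\partial\{v_i>0\}\cap\partial\{v_j>0\}$ together with the fact $|\nabla v_i|=|\nabla v_j|$ on $\Gamma_{ij}$. On $\Gamma_{ij}$ we have $v_i=v_j=0$, hence $\phi_i=-u_i+\sum_{k\neq i}u_k$ and similarly for $\phi_j$, so a direct expansion gives $\phi_i+\phi_j=2\sum_{k\neq i,j}u_k$. Grouping the boundary contributions in pairs $(i,j)$ and $(j,i)$ therefore yields
\begin{equation*}
-2\sum_{i=1}^N\int_{B_1\cap\partial\{v_i>0\}}\phi_i\,\partial_{\nnu}v_i\dS
=2\sum_{1\le i<j\le N}\int_{\Gamma_{ij}}(\phi_i+\phi_j)|\nabla v_i|\dS=\beta(u,v),
\end{equation*}
which is the asserted identity. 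The only genuinely delicate step is the integration by parts across the free boundary, which is handled by \cref{lemma:int_by_parts}: the singular set $\mathrm{Sing}(v)$ is negligible in terms of $\mathcal{H}^{d-1}$, but one must cut it out with a family of bump functions and control the error using that $v,u$ are Lipschitz, and this has already been absorbed into the lemma.
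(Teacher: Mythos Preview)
Your proof is correct and follows essentially the same approach as the paper: both compute the difference $W_\gamma(u)-W_\gamma(\d_{\Sigma_N}(u,v))$ phase by phase via \cref{lemma:dist}, use the segregation identity $\nabla u_j\cdot\nabla u_k=0$ a.e.\ to expand the squares, cancel the pure $v$-terms through $W_\gamma(v)=0$, invoke \cref{lemma:int_by_parts} to pass to the free boundary, and pair the interface contributions using $|\nabla v_i|=|\nabla v_j|$ on $\Gamma_{ij}$. Your packaging with $\phi_i$ and the observation $\phi_i+\phi_j=2\sum_{k\neq i,j}u_k$ on $\Gamma_{ij}$ is a slightly cleaner bookkeeping of exactly the same computation.
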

	\begin{proof}
		We prove the result by direct computations. In view of \Cref{lemma:dist} we have that
		\begin{align*}
			W_\gamma(\d_{\Sigma_N}(u,v))&=\sum_{i=1}^N\Bigg[\int_{B_1\cap\{v_i>0\}}\Big|\nabla\Big(v_i-u_i+\sum_{k\neq i}u_k\Big)\Big|^2\dx \\
			&\quad-\gamma\int_{\partial B_1\cap \{v_i>0\}}\Big|v_i-u_i+\sum_{k\neq i}u_k\Big|^2\ds\Bigg]
		\end{align*}
		and so, by explicit computations, we derive that
		\begin{align*}
			W_\gamma(\d_{\Sigma_N}(u,v))&=W_\gamma(u)+W_\gamma(v) \\
			\quad &-2\sum_{i=1}^N\int_{B_1\cap \{v_i>0\}}\nabla v_i\cdot\nabla\Big(u_i-\sum_{k\neq i}u_k\Big)\dx+2\gamma\int_{\partial B_1\cap\{v_i>0\}} v_i\Big(u_i-\sum_{k\neq i} u_k\Big)\ds.
		\end{align*}
		Now, in view of \Cref{lemma:int_by_parts}, \eqref{eq:grad_interface} and by homogeneity, we have that 
		\begin{align*}
			\sum_{i=1}^N\int_{B_1\cap \{v_i>0\}}\nabla v_i\cdot\nabla\Big(u_i-\sum_{k\neq i}u_k\Big)\dx&=\sum_{i=1}^N\int_{B_1\cap\partial \{v_i>0\}}\partial_{\nnu} v_i\Big(u_i-\sum_{k\neq i}u_k\Big)\dx \\
			&\quad +\sum_{i=1}^N\int_{\partial B_1}\partial_{\nnu}v_i\Big(u_i-\sum_{k\neq i}u_k\Big)\ds \\
			&=-\sum_{i=1}^N\int_{B_1\cap\partial \{v_i>0\}}|\nabla v_i|\Big(u_i-\sum_{k\neq i}u_k\Big)\dx \\
			&\quad +\gamma\sum_{i=1}^N\int_{\partial B_1}v_i\Big(u_i-\sum_{k\neq i}u_k\Big)\ds
		\end{align*}
		Therefore, since also $W_\gamma(v)=0$ (again from \Cref{lemma:int_by_parts}), we have
		\begin{align*}
			W_\gamma(\d_{\Sigma_N}(u,v))&=W_\gamma(u)+2\sum_{i=1}^N\int_{B_1\cap\partial \{v_i>0\}}|\nabla v_i|\Big(u_i-\sum_{k\neq i}u_k\Big)\dx \\
			&=W_\gamma(u)+2\sum_{i=1}^N\sum_{\substack{j=1 \\ j\neq i}}^N \int_{B_1\cap\partial \{v_i>0\}\cap\partial \{v_j>0\}}|\nabla v_i|\Big(u_i-\sum_{k\neq i}u_k\Big)\dx \\
			&=W_\gamma(u)-4\sum_{1\leq i<j\leq N}\int_{B_1\cap\partial \{v_i>0\}\cap\partial \{v_j>0\}}|\nabla v_i|\Big(\sum_{k\neq i,j}u_k\Big)\dx,
		\end{align*}
		where we used \eqref{eq:grad_interface_sing} and \eqref{eq:grad_interface}, thus concluding the proof.
	\end{proof}
	
	We conclude this section with the following.
	
	\begin{proof}[\bf Proof of \cref{t:epi}]
		In the whole proof, with an abuse of notation, for the sake of simplicity, we write $\Omega_i$ in place of $\Omega_i^Y\cap B_1$, for $i=1,2,3$.
		
		We reason by contradiction, and assume that there exist sequences $\{\delta_n\}_n\sub\R_+$, $\{\eps_n\}_n\sub\R_+$, $\{\tau_n\}_n\sub\R_+$, $\{N_n\}_n\sub\N$ and $\{c_n\}_n\sub H^1(B_1;\Sigma_{N_n})\cap C^{0,1}(B_1;\R^{N_n})$ such that 
		\begin{align}
			& c_{n,i}\geq 0\quad\text{for all }n\in\N~\text{and all }i=1,\dots,N_n, \notag\\
			&\delta_n,\eps_n,\tau_n\to 0\quad\text{as }n\to\infty, \notag \\
			&\sum_{i=1}^3\d_{\mathcal{H}}\left(\{c_{n,i}>0\},\{Y_i>0\}\right)+\sum_{i=4}^{N_n}\d_{\mathcal{H}}\left(\{c_i>0\},\{x_{d-1}=x_d=0\}\right)=\tau_n, \label{eq:epi5}\\
			&\norm{\d_{\Sigma_{N_n}}(c_n,Y)}_{H^1(B_1)}=\delta_n \notag
		\end{align}
		and
		\begin{equation}\label{eq:epi1}
			(1-\eps_n)W_{\frac{3}{2}}(c_n)<W_{\frac{3}{2}}(u)
		\end{equation}
		for all $u\in H^1(B_1;\Sigma_{N_n})\cap C^{0,1}(B_1;\R^{N_n})$ such that $u=c_n$ on $\partial B_1$. Moreover, without loss of generality, we can assume that
		\begin{equation}\label{eq:proj_L2}
			\norm{\d_{\Sigma_{N_n}}(c_n,Y)}_{L^2(B_1)}=\inf\left\{\norm{\d_{\Sigma_{N_n}}(c_n,h)}_{L^2(B_1)}\colon h\in \mathcal{M}_{\frac{3}{2}}(\R^d;3)\right\}.
		\end{equation}
        Let us prove this fact. Let $P c_n$ be the projection of $c_n$ onto the space of $3/2$-homogeneous blow-ups $\mathcal{M}_{\frac{3}{2}}(\R^d;3)$ with respect to the $L^2$-scalar product, i.e.
            \begin{equation}\label{eq:L2}
                \norm{d_{\Sigma_{N_n}}(c_n,Pc_n)}_{L^2(B_1)}=\inf\left\{\norm{\d_{\Sigma_{N_n}}(c_n,h)}_{L^2(B_1)}\colon h\in \mathcal{M}_{\frac{3}{2}}(\R^d;3)\right\}.
            \end{equation}
            We deduce that
            \begin{align*}
                \norm{d_{\Sigma_{N_n}}(Y,Pc_n)}_{L^2(B_1)}&\leq \norm{d_{\Sigma_{N_n}}(c_n,Pc_n)}_{L^2(B_1)}+\norm{d_{\Sigma_{N_n}}(c_n,Y)}_{L^2(B_1)} \\
                &\leq 2\norm{d_{\Sigma_{N_n}}(c_n,Y)}_{L^2(B_1)}\le2\delta_n.
            \end{align*}
            Hence, if $Pc_n=a_n Y\circ T_n$ for some $a_n>0$ and a rotation $T_n\in SO(d)$, we have that $a_n\to 1$ and $T_n\to I$ as $n\to\infty$, and this implies that
            \begin{equation*}
                \norm{\d_{\Sigma_{N_n}}(Y,Pc_n)}_{H^1(B_1)}\to 0\quad\text{as }n\to\infty.
            \end{equation*}
            Essentially, this is a consequence of the fact that, being $\mathcal{M}_{\frac{3}{2}}(\R^d;3)$ a finite dimensional manifold, the $L^2$ and the $H^1$ metrics are equivalent. We now observe that, since $T_n\in SO(d)$, we have
            \begin{align}
                \norm{\d_{\Sigma_{N_n}}\left(\frac{c_n\circ T_n^{-1}}{a_n},Y\right)}_{H^1(B_1)}&=\frac{1}{a_n}\norm{\d_{\Sigma_{N_n}}(c_n,Pc_n)}_{H^1(B_1)} \notag\\
                &\leq \frac{1}{a_n}\left(\norm{\d_{\Sigma_{N_n}}(c_n,Y)}_{H^1(B_1)} +\norm{\d_{\Sigma_{N_n}}(Y,Pc_n)}_{H^1(B_1)} \right)=:\delta_n'\to 0,\label{eq:delta'}
            \end{align}
            as $n\to\infty$. In addition, from \eqref{eq:L2}, we obtain that
            \begin{align}
                \norm{\d_{\Sigma_{N_n}}\left(\frac{c_n\circ T_n^{-1}}{a_n},Y\right)}_{L^2(B_1)} &=\inf\left\{\norm{\d_{\Sigma_{N_n}}\left(\frac{c_n\circ T_n^{-1}}{a_n},\frac{h\circ T_n^{-1}}{a_n}\right)}_{L^2(B_1)}\colon h\in \mathcal{M}_{\frac{3}{2}}(\R^d;3)\right\} \notag\\
                & =\inf\left\{\norm{\d_{\Sigma_{N_n}}\left(\frac{c_n\circ T_n^{-1}}{a_n},h\right)}_{L^2(B_1)}\colon h\in \mathcal{M}_{\frac{3}{2}}(\R^d;3)\right\}.\label{eq:L2'}
            \end{align}
            In particular, $Y$ is the projection of $\frac{c_n\circ T_n^{-1}}{a_n}$ onto the space $\mathcal{M}_{\frac{3}{2}}(\R^d;3)$ with respect to $L^2(B_1)$. Finally, one can observe that, by triangular inequality there holds
            \begin{align}
                &\sum_{i=1}^3\d_{\mathcal{H}}\left(\left\{\left(\frac{c_n\circ T_n^{-1}}{a_n}\right)_i>0\right\},\{Y_i>0\}\right)+\sum_{i=4}^{N_n}\d_{\mathcal{H}}\left(\left\{\left(\frac{c_n\circ T_n^{-1}}{a_n}\right)_i>0\right\},\mathrm{Sing}(Y)\right)\notag \\
                &\leq \sum_{i=1}^3\d_{\mathcal{H}}\left(\left\{(c_n\circ T_n^{-1})_i>0\right\},\{(Y\circ T_n^{-1})_i>0\}\right)\notag \\
                &+\sum_{i=4}^{N_n}\d_{\mathcal{H}}\left(\left\{(c_n\circ T_n^{-1})_i>0\right\},\mathrm{Sing}(Y\circ T_n^{-1})\right) \notag\\
                &+ \sum_{i=1}^3\d_{\mathcal{H}}\left(\{(Y\circ T_n^{-1})_i>0\},\{Y_i>0\}\right)+\sum_{i=4}^{N_n}\d_{\mathcal{H}}\left(\mathrm{Sing}\,(Y\circ T_n^{-1}),\mathrm{Sing}(Y)\right):=\tau_n'\label{eq:tau'}
            \end{align}
            and $\tau_n'\to 0$ as $n\to\infty$. Now, we rename
            \begin{equation*}
                \frac{c_n\circ T_n^{-1}}{a_n}~\text{as }c_n,\qquad \norm{\d_{\Sigma_{N_n}}\left(\frac{c_n\circ T_n^{-1}}{a_n},Y\right)}_{H^1(B_1)}~\text{as }\delta_n,
            \end{equation*}
            and
            \begin{equation*}
                \sum_{i=1}^3\d_{\mathcal{H}}\left(\left\{\left(\frac{c_n\circ T_n^{-1}}{a_n}\right)_i>0\right\},\{Y_i>0\}\right) +\sum_{i=4}^{N_n}\d_{\mathcal{H}}\left(\left\{\left(\frac{c_n\circ T_n^{-1}}{a_n}\right)_i>0\right\},\mathrm{Sing}(Y)\right)
            \end{equation*}
            as $\tau_n$, so that $\delta_n\leq \delta_n'$ and $ \tau_n\leq \tau_n'$, with $\delta_n',\tau_n'\to 0$ as in \eqref{eq:delta'} and \eqref{eq:tau'}, and we can proceed in the proof by assuming \eqref{eq:proj_L2}.

		Now, first of all, we apply \Cref{lemma:weiss_diff} with $v=Y$  and divide both sides by $\delta_n^2$, thus obtaining that
		\begin{equation*}
			(1-\eps_n)\left(W_{\frac{3}{2}}(\xi_n)+\frac1{\delta_n^2}\beta(c_n,Y)\right)<W_{\frac{3}{2}}\left(\frac{\d_{\Sigma_{N_n}}(u,Y)}{\delta_n}\right)+\frac1{\delta_n^2}\beta(u,Y),
		\end{equation*}
		for all $u\in H^1(B_1;\Sigma_{N_n})\cap C^{0,1}(B_1;\R^{N_n})$ such that $u=c_n$ on $\partial B_1$, where $\xi_n:=\d_{\Sigma_{N_n}}(c_n,Y)/\delta_n$. In other words,
		\begin{equation*}
			(1-\eps_n)\left(\int_{B_1}|\nabla \xi_n|^2\dx+\frac1{\delta_n^2}\beta(c_n,Y)\right)<\int_{B_1}\left|\nabla \Big(\frac{\d_{\Sigma_{N_n}}(u,Y)}{\delta_n}\Big)\right|^2\dx+\frac1{\delta_n^2}\beta(u,Y)+\frac{3\eps_n}{2}\int_{\partial B_1}\xi_n^2\ds.
		\end{equation*}
		We also let 
		\begin{equation*}
			w_{n,i}:=\begin{cases}
				\displaystyle\frac{Y_i-c_{n,i}}{\delta_n}+\sum_{j\neq i}\frac{c_{n,j}}{\delta_n},&\text{in }\Omega_i, \\
				0, &\text{in }B_1\setminus\Omega_i,
			\end{cases}
		\end{equation*}
		for $i=1,2,3$. Since
		\begin{equation*}
			\abs{w_{n,i}}=\frac{\d_{\Sigma_{N_n}}(c_n,Y)}{\delta_n}=\xi_n,\quad\text{in }\Omega_i,
		\end{equation*}
		see \Cref{lemma:dist}, in particular,
		\begin{equation*}
			\sum_{i=1}^3\norm{w_{n,i}}_{H^1(\Omega_i)}^2=\norm{\xi_n}_{H^1(B_1)}^2=1.
		\end{equation*}
		Therefore, there exist functions $w_i\in H^1(\Omega_i)$ such that, up to a subsequence,
		\begin{equation*}
			w_{n,i}\weak w_i\quad\text{weakly in }H^1(\Omega_i),~\text{as }n\to\infty,
		\end{equation*}
		for $i=1,2,3$. Hence, we can rephrase the contradiction assumption as
		\begin{multline}\label{eq:epi2}
			(1-\eps_n)\left(\sum_{i=1}^3\int_{\Omega_i}|\nabla w_{n,i}|^2\dx+\frac1{\delta_n^2}\beta(c_n,Y)\right) \\
			<\int_{B_1}\Big|\nabla \Big(\frac{\d_{\Sigma_{N_n}}(u,Y)}{\delta_n}\Big)\Big|^2\dx+\frac1{\delta_n^2}\beta(u,Y)+o(1),
		\end{multline}
		as $n\to\infty$,
		for all $u\in H^1(B_1;\Sigma_{N_n})\cap C^{0,1}(B_1;\R^{N_n})$ such that $u=c_n$ on $\partial B_1$, where the reminder term $o(1)$ is independent from $u$.
		We now proceed in the following steps, which, at the end, lead to a contradiction:
		\begin{itemize}
			\item \textbf{Step 1:} $\delta_n^{-2}\beta(c_n,Y)$ is uniformly bounded with respect to $n$;
			\item \textbf{Step 2:} $w=(w_1,w_2,w_3)$ is a solution of the linearized problem, see \Cref{sec:linearized};
			\item \textbf{Step 3:} $w_i=0$ for all $i=1,2,3$;
			\item \textbf{Step 4:} $w_{n,i}\to 0$ strongly in $H^1(\Omega_i)$, as $n\to\infty$, for all $i=1,2,3$.
		\end{itemize}
		\noindent \underline{Proof of \textbf{Step 1}.} For any $n\in\N$, we define a suitable \enquote{competitor} $u_n\in H^1(B_1;\Sigma_{N_n})$ such that $u_n=c_n$ on $\partial B_1$ as follows. First, in an inner ball, we let $u_n=Y$, namely
		\begin{equation*}
			u_n=Y\quad\text{in }B_{\frac{1}{2}}.
		\end{equation*}
		In the outer annulus $B_1\setminus B_{\frac{1}{2}}$, we consider the interpolation between $Y$ and $c_n$; namely, we first set
		\begin{equation*}
			t=t_r:=2\left(r-\frac{1}{2}\right),\quad\text{with }r=|x|.
		\end{equation*}
		Now, we let
		\begin{equation*}
			u_n(r,\theta)=r^{\frac{3}{2}}\left(t\,c_{n,i}(1,\theta)+(1-t)Y_i(1,\theta)\right)\,\vec{\bm{e}}_i
		\end{equation*}
		if $1/2<r<1$ and $Y_j(\theta)=c_{n,j}(1,\theta)=0$ for all $j\neq i$ and, on the other hand, we let
		\begin{equation*}
			u_n(r,\theta)=\begin{cases}
				r^{\frac{3}{2}}(Y_i(1,\theta)-t(Y_i(1,\theta)+c_{n,j}(1,\theta)))\,\vec{\bm{e}}_i,&\text{for }0<t<\frac{Y_i(1,\theta)}{Y_i(1,\theta)+c_{n,j}(1,\theta)}, \\
				r^{\frac{3}{2}}(-Y_i(1,\theta)+t(Y_i(1,\theta)+c_{n,j}(1,\theta)))\,\vec{\bm{e}}_j,&\text{for }\frac{Y_i(1,\theta)}{Y_i(1,\theta)+c_{n,j}(1,\theta)}\leq t<1,
			\end{cases}
		\end{equation*}
		if $1/2<r<1$ and $Y_i(\theta)>0$ and $c_{n,j}(1,\theta)>0$ for some $j\neq i$. If we now let
		\begin{equation*}
			\bar{u}_{n,i}:=\frac{Y_i-u_{n,i}+\sum_{j\neq i}u_{n,j}}{\delta_n}\quad\text{in }\Omega_i,
		\end{equation*}
		then there holds
		\begin{equation*}
			\abs{\bar{u}_{n,i}}=\frac{\d_{\Sigma_{N_n}}(u_n,Y)}{\delta_n}\quad\text{in }\Omega_i,
		\end{equation*}
		see \Cref{lemma:dist}, and one can easily check that
		\begin{equation*}
			\bar{u}_{n,i}=tw_{n,i}\quad\text{in }\Omega_i\cap\left(B_1\setminus B_{\frac{1}{2}}\right)\quad\text{and}\quad \bar{u}_{n,i}=0\quad\text{in }B_{\frac{1}{2}}.
		\end{equation*}
		Moreover, we have that
		\begin{equation*}
			u_n=tc_n\quad\text{on }\{(Y_1,Y_2,Y_3)=(0,0,0)\}.
		\end{equation*}
		Therefore, from \eqref{eq:epi2} we derive that
		\begin{equation}\label{eq:epi_beta_bdd}
			\frac{\beta((1-\eps_n-t)c_n,Y)}{\delta_n^2}<(\eps_n-1)\sum_{i=1}^3\int_{\Omega_i}|\nabla w_{n,i}|^2\dx+\sum_{i=1}^3\int_{\Omega_i}|\nabla (tw_{n,i})|^2\dx+o(1),
		\end{equation}
		as $n\to\infty$. On one hand, being $w_{n,i}$ bounded in $H^1(\Omega_i)$, the right hand side is uniformly bounded with respect to $n\in\N$, while, on the other hand, by direct computations we have
		\begin{equation}\label{eq:epi_beta_diff}
			\beta((1-\eps_n-t)c_n,Y)=(d+1)\int_0^1r^d(1-\eps_n-t)\dr \,\beta(c_n,Y).
		\end{equation}
		Therefore since $t<1$ in $B_1\setminus B_{1/2}$, from \eqref{eq:epi_beta_bdd} we conclude the proof of the step.
		
		\noindent \underline{Proof of \textbf{Step 2}.} We are going to prove that 
		\begin{equation}\label{eq:epi4}
			-\Delta(w_i-w_j)=0\quad\text{in }\Omega_{ij},
		\end{equation}
		for all $i,j=1,2,3$, $i\neq j$, where
		\begin{equation*}
			\Omega_{ij}:=\left(\Omega_i\cup\Omega_j\cup\left(\partial\Omega_i\cap\partial\Omega_j\right)\right)\cap B_1.
		\end{equation*}
		For the sake of simplicity, we prove it for $i=1$ and $j=2$. First of all, we fix any $x_0\in\Omega_{12}$ and $r>0$ in such a way that $\overline{B_{2r}(x_0)}\sub \Omega_{12}$. Moreover, for fixed $\tau>0$, we let
		\begin{equation*}
			C_\tau:=\left\{(x'',x_{d-1},x_d)\in\R^d\colon \sqrt{x_{d-1}^2+x_d^2}<\tau\right\}
		\end{equation*}
		and we assume that $B_{2r}(x_0)\cap C_\tau=\emptyset$ and that $c_{n,j}=0$ in $B_{2r}(x_0)$, for $j=3,\dots,N_n$. This is possible by choosing $\tau$ sufficiently small and $n$ sufficiently large, in view of the Hausdorff convergence assumption \eqref{eq:epi5}. We now use \eqref{eq:epi2} with $u\in H^1(B_1;\Sigma_{N_n})$ being such that $u=c_n$ in $B_1\setminus B_{2r}(x_0)$ and $u_j=0$ in $B_{2r}(x_0)$ for $j=3,\dots,N_n$. Since $c_{n,j}=u_{n,j}\equiv 0$ in $B_{2r}(x_0)$, we have that
		$$\beta(u,Y)=\beta(c_n,Y),$$
		which, in view of Step 1, gives
		\begin{equation}\label{eq:beta-mid-step}
			\frac1{\delta_n^2}\beta(u,Y)-\frac{1-\eps_n}{\delta_n^2}\beta(c_n,Y)=\frac{\eps_n}{\delta_n^2}\beta(c_n,Y)=o(1),
		\end{equation}
		as $n\to+\infty$. Moreover, we observe that
		\begin{equation*}
			\frac{\d_{\Sigma_{N_n}}(u_n,Y)}{\delta_n}=\frac{1}{\delta_n}\Bigg|Y_i-c_{n,i}+\sum_{j\neq i} c_{n,j}\Bigg|=w_{n,i}\quad\text{in }\Omega_i\setminus B_{2r}(x_0),
		\end{equation*}
		for $i=1,2,3$. Now, from \eqref{eq:epi2} and \eqref{eq:beta-mid-step} we deduce that
		\begin{multline*}
			\int_{\Omega_1\cap B_{2r}(x_0)}|\nabla w_{n,1}|^2\dx +\int_{\Omega_2\cap B_{2r}(x_0)}|\nabla w_{n,2}|^2\dx +o(1)\\
			<\int_{\Omega_1\cap B_{2r}(x_0)}\Bigg|\nabla\left(\frac{Y_1-u_1+u_2}{\delta_n}\right)\Bigg|^2\dx+\int_{\Omega_2\cap B_{2r}(x_0)}\Bigg|\nabla\left(\frac{Y_2-u_2+u_1}{\delta_n}\right)\Bigg|^2\dx,
		\end{multline*}
		for all $u\in H^1(B_1;\Sigma_2)$ such that $u=c_n$ on $\partial B_{2r}(x_0)$, where the reminder term does not depend on $u$. In particular, if we let
		\begin{equation*}
			W_n:=w_{n,1}-w_{n,2}\quad\text{and}\quad U:=u_1-u_2
		\end{equation*}
		this implies that
		\begin{equation}\label{eq:epi6}
			\int_{B_{2r}(x_0)}|\nabla W_n|^2\dx<\int_{B_{2r}(x_0)}\Bigg|\nabla\Bigg(\frac{Y_1-Y_2-U}{\delta_n}\Bigg)\Bigg|^2\dx+o(1)
		\end{equation}
		for all $U\in H^1(B_{2r}(x_0))$ such that $U=c_{n,1}-c_{n,2}$ on $\partial B_{2r}(x_0)$. One can also see that, by definition,
		\begin{equation*}
			w_{n,1}=-w_{n,2}\quad\text{and}\quad \nabla w_{n,1}=-\nabla w_{n,2}\quad\text{on }\left(\partial\Omega_1\cap\partial\Omega_2\right)\cap B_{2r}(x_0),
		\end{equation*}
		so that $W_n\in H^1(B_{2r}(x_0))\cap C^{0,1}(B_{2r}(x_0))$; moreover, its $H^1$ norm is bounded and so, up to a subsequence,
		\begin{equation*}
			W_n\weak w_1-w_2\quad\text{weakly in }H^1(B_{2r}(x_0)),~\text{as }n\to\infty.
		\end{equation*}
		At this point, we define
		\begin{equation*}
			U=\eta(Y_1-Y_2-\delta_n\phi)+(1-\eta)(c_{n,1}-c_{n,2}),
		\end{equation*}
		where $\eta\in C_c^\infty(B_{2r}(x_0))$ is such that $\eta=1$ in $B_r(x_0)$ and $\phi\in H^1(B_{2r}(x_0))$. Plugging this choice into \eqref{eq:epi6}, we get
		\begin{equation*}
			\int_{B_{2r}(x_0)}|\nabla W_n|^2\dx<\int_{B_{2r}(x_0)}|\nabla((1-\eta)W_n+\eta\phi)|^2\dx+o(1).
		\end{equation*}
		By direct computations, one can see that
		\begin{multline*}
			\int_{B_{2r}(x_0)}(1-(1-\eta)^2)|\nabla W_n|^2\dx+o(1) \\<\int_{B_{2r}(x_0)}\left(W_n^2|\nabla\eta|^2-2(1-\eta)W_n\nabla W_n\cdot\nabla \eta+|\nabla(\eta\phi)|^2+2\nabla((1-\eta)W_n)\cdot\nabla(\eta\phi)\right)\dx
		\end{multline*}
		and, since we can pass now to the limit as $n\to\infty$ in the inequality above, after rearranging the terms back we obtain that
		\begin{equation*}
			\int_{B_{2r}(x_0)}|\nabla (w_1-w_2)|^2\dx\leq \int_{B_{2r}(x_0)}|\nabla ((1-\eta)(w_1-w_2)+\eta\phi)|^2\dx.
		\end{equation*}
		Finally, we take $\phi=w_1-w_2$ in $B_{2r}(x_0)\setminus B_{r}(x_0)$, so that
		\begin{equation*}
			\int_{B_r(x_0)}|\nabla (w_1-w_2)|^2\dx\leq \int_{B_r(x_0)}|\nabla\phi|^2\dx.
		\end{equation*}
		Hence, we proved that $\Delta(w_1-w_2)=0$ in $B_r(x_0)$. Up to moving the ball, we get \eqref{eq:epi4}.\medskip
		
		\noindent \underline{Proof of \textbf{Step 3}.} We use the classification of the $\sfrac32$-homogeneous solutions of the linearized problem, namely \Cref{prop:classification}. More precisely, if we denote
		\begin{equation*}
			\widetilde{w}:=-w_1+w_2-w_3,
		\end{equation*}
		which, we recall, belongs to $H^1(B_1\setminus\mathcal{P})$, then in view of \Cref{prop:classification} we have that
		\begin{equation*}
			\widetilde{w}=w_{\tu{e}}+w_{\tu{o}},
		\end{equation*}
		where $w_{\tu{e}}$ is even with respect to $x_d$ and is of the form
		\begin{equation*}
			w_{\tu{e}}(x)=a_0\widetilde{Y}(x_{d-1},x_d)+U_0(x_{d-1},x_d)\sum_{j=1}^{d-2}a_j x_j,\quad\text{for }x_d>0,
		\end{equation*}
		for some $a_j\in\R$, $j=0,\dots,d-2$, and $w_{\tu{o}}$ is odd with respect to $x_d$ and is of the form
		\begin{equation*}
			w_{\tu{o}}(x)=b_0\widetilde{Y}(-x_{d-1},x_d)+U_0(-x_{d-1},x_d)\sum_{j=1}^{d-2}b_j x_j\quad\text{for }x_d>0,
		\end{equation*}
		for some $b_j\in\R$, $j=0,\dots,d-2$, where
		\begin{equation}\label{eq:Y_tilde}
			\widetilde{Y}(r,\theta)=r^{\frac{3}{2}}\cos\left(\frac{3}{2}\theta\right)=-Y_1+Y_2-Y_3\quad\text{and}\quad 	U_0(r,\theta)=r^{\frac{1}{2}}\cos\left(\frac{\theta}{2}\right).
		\end{equation}
		Moreover, if we denote by $Z$ and $V_0$, respectively, the odd extensions of $\widetilde{Y}(-x_{d-1},x_d)$ and $U_0(-x_{d-1},x_d)$ and if we pass to polar coordinates in the last two variables, i.e. 
		\begin{equation*}
			\begin{bvp}
				x_{d-1}&=r\cos\theta \\
				x_d&=r\sin\theta,
			\end{bvp}\qquad\text{with }r\in(0,1)~\text{and}~ \theta\in\left(-\pi,\pi\right),
		\end{equation*}
		then we have
		\begin{equation}\label{eq:epi9}
			Z(r,\theta)=-r^{\frac{3}{2}}\sin\left(\frac{3}{2}\theta\right)\quad\text{and}\quad V_0(r,\theta)=r^{\frac{1}{2}}\sin\left(\frac{\theta}{2}\right).
		\end{equation}
		We also observe that, by explicit computations
		\begin{equation}\label{eq:epi8}
			\partial_{x_{d-1}} \widetilde{Y}(r,\theta)=\frac{3}{2}r^{\frac{1}{2}}\cos\left(\frac{\theta}{2}\right)\quad\text{and}\quad \partial_{x_d} \widetilde{Y}(r,\theta)=-\frac{3}{2}r^{\frac{1}{2}}\sin\left(\frac{\theta}{2}\right).
		\end{equation}
		We now prove that all the coefficients $a_j,b_j$, with $j=0,\dots, d-2$ vanish, by exploiting the fact that $Y$ is the projection of $c_n$ onto the space of blow-up limits $\mathcal{M}_{\frac{3}{2}}(\R^d;3)$. We know that
		\begin{equation}\label{eq:proj_L2_epi}
			\norm{\d_{\Sigma_{N_n}}(c_n,Y)}_{L^2(B_1)}^2\leq \norm{\d_{\Sigma_{N_n}}(c_n,h)}_{L^2(B_1)}^2\quad\text{for all }h\in \mathcal{M}_{\frac{3}{2}}(\R^d;3)
		\end{equation}
		and from this we can deduce that
            \begin{equation}\label{eq:proj_1}
            \begin{aligned}
                &2\sum_{i=1}^{N_n}\int_{\{c_{n,i}>0\}}(c_{n,i}-\widehat{Y}_i)(\widehat{h}_i-\widehat{Y}_i)\dx -2\int_{\{c_n=0\}}\widehat{Y}_1(\widehat{h}_1-\widehat{Y}_1)\dx\\
                &\leq \sum_{i=1}^{N_n}\int_{\{c_{n,i}>0\}}(\widehat{h}_i-\widehat{Y}_i)^2\dx+\int_{\{c_n=0\}}(\widehat{h}_1-\widehat{Y}_1)^2\dx,
            \end{aligned}
            \end{equation}
            where, for the sake of brevity, we assumed that every integral is always taken inside $B_1$, avoiding to write $\cap B_1$. Moreover, we recall that the $\widehat{~}$ operation is defined in \eqref{eq:hat_op}. Let us prove \eqref{eq:proj_1}. First of all, we observe that, since
            \begin{equation*}
                \d_{\Sigma_{N_n}}(c_n,Y)^2=|c_{n,i}-\widehat{Y}_i|^2\quad\text{on }\overline{\{c_{n,i}>0\}},
            \end{equation*}
            and since
            \begin{equation*}
                \d_{\Sigma_{N_n}}(c_n,Y)^2=|\widehat{Y}_j|^2=|Y|^2\quad\text{in }\{c_n=0\},\quad\text{for any }j=1,2,3,
            \end{equation*}
            we have that
            \begin{align*}
                \norm{\d_{\Sigma_{N_n}}(c_n,Y)}_{L^2(B_1)}^2&=\int_{\{c_n=0\}}|\widehat{Y}_1|^2\dx+\sum_{i=1}^{N_n}\int_{\{c_{n,i}>0\}}(c_{n,i}-\widehat{Y}_i)^2\dx.
            \end{align*}
            Hence, reasoning analogously for the right-hand side of \eqref{eq:proj_L2_epi}, we get that
            \begin{equation*}
                \int_{\{c_n=0\}}|\widehat{Y}_1|^2\dx+\sum_{i=1}^{N_n}\int_{\{c_{n,i}>0\}}(c_{n,i}-\widehat{Y}_i)^2\dx\leq \int_{\{c_n=0\}}|\widehat{h}_1|^2\dx+\sum_{i=1}^{N_n}\int_{\{c_{n,i}>0\}}(c_{n,i}-\widehat{h}_i)^2\dx.
            \end{equation*}
            Then, by explicit calculations we get \eqref{eq:proj_1}. 
            
            We now fix $h\in\mathcal{M}_{\frac{3}{2}}(\R^d;3)$ and claim that
            \begin{multline}\label{eq:proj_2}
                2\sum_{i=1}^3\int_{\Omega_i}(Y_i-\widehat{c}_{n,i})(Y_i-\widehat{h}_i)\dx \\ -4\sum_{i=1}^3\sum_{j\neq i}\int_{\Omega_i\cap\{c_{n,j}>0\}}(Y_i-\widehat{c}_{n,i})\Big(\sum_{k\neq i,j}h_k\Big)\dx\leq \sum_{i=1}^3\int_{\Omega_i}(Y_i-\widehat{h}_i)^2\dx.
            \end{multline}
            We start by taking into account the first term in the left-hand side of \eqref{eq:proj_1}. We split and rearrange the integrals as follows
            \begin{align*}
                &\sum_{i=1}^{N_n}\int_{\{c_{n,i}>0\}}(c_{n,i}-\widehat{Y}_i)(\widehat{h}_i-\widehat{Y}_i)\dx =\sum_{i=1}^{N_n}\sum_{j=1}^3\int_{\{c_{n,i}>0\}\cap \Omega_j}(c_{n,i}-\widehat{Y}_i)(\widehat{h}_i-\widehat{Y}_i)\dx =\\
                &=\sum_{j=1}^3\Bigg(\int_{\{c_{n,j}>0\}\cap \Omega_j}(c_{n,j}-\widehat{Y}_j)(\widehat{h}_j-\widehat{Y}_j)\dx +\sum_{i\neq j}\int_{\{c_{n,i}>0\}\cap\Omega_j}(c_{n,i}-\widehat{Y}_i)(\widehat{h}_i-\widehat{Y}_i)\dx\Bigg) \\
                &=\sum_{j=1}^3\Bigg(\int_{\{c_{n,j}>0\}\cap \Omega_j}(\widehat{c}_{n,j}-Y_j)(\widehat{h}_j-Y_j)\dx +\sum_{i\neq j}\int_{\{c_{n,i}>0\}\cap\Omega_j}(-\widehat{c}_{n,j}+Y_j)(\widehat{h}_i+Y_j)\dx\Bigg) \\
                &=\sum_{j=1}^3\Bigg(\int_{\{c_{n,j}>0\}\cap \Omega_j}(\widehat{c}_{n,j}-Y_j)(\widehat{h}_j-Y_j)\dx +\sum_{i\neq j}\int_{\{c_{n,i}>0\}\cap\Omega_j}(\widehat{c}_{n,j}-Y_j)(-\widehat{h}_i-Y_j)\dx\Bigg).
            \end{align*}
            We now observe that, since for $i\neq j$,
            \begin{equation}\label{eq:proj_4}
                \widehat{h}_j+\widehat{h}_i=-2\sum_{k\neq i,j}h_k\,,
            \end{equation}
            we have
            \begin{equation*}
                -\widehat{h}_i-Y_j=\widehat{h}_j-Y_j-(\widehat{h}_j+\widehat{h}_i)=\widehat{h}_j-Y_j+2\sum_{k\neq i,j}h_k.
            \end{equation*}
            Now, plugging this identity into the previous one, we get
            \begin{align}
                \sum_{i=1}^{N_n}\int_{\{c_{n,i}>0\}}(c_{n,i}-\widehat{Y}_i)(\widehat{h}_i-\widehat{Y}_i)\dx&=\sum_{j=1}^3\Bigg(\sum_{i=1}^{N_n}\int_{\{c_{n,i}>0\}\cap \Omega_j}(\widehat{c}_{n,j}-Y_j)(\widehat{h}_j-Y_j)\dx\notag \\
                &\quad+2\sum_{i\neq j}\int_{\{c_{n,i}>0\}\cap \Omega_j}(\widehat{c}_{n,j}-Y_j)\Big(\sum_{k\neq i,j}h_k\Big)\dx\Bigg).\label{eq:proj_3}
            \end{align}
            Let us now consider the second term in the left-hand side of \eqref{eq:proj_1}. Since 
            $$|\widehat Y_1|^2=Y_1^2+Y_2^2+Y_3^2,$$
            and
            \begin{align*}
            \widehat Y_1\widehat h_1&=(Y_1-Y_2-Y_3)(h_1-h_2-h_3)\\
            &=Y_1(h_1-h_2-h_3)+Y_2(h_2-h_1-h_3+2h_3)+Y_3(h_3-h_1-h_2+2h_2)\\
            &=Y_1\widehat h_1+Y_2\widehat h_2+Y_3\widehat h_3+2(Y_2h_3+Y_3h_2),
            \end{align*}
            we have that 
            \begin{align*}
                -\int_{\{c_n=0\}}\widehat{Y}_1(\widehat{h}_1-\widehat{Y}_1)\dx&=-\sum_{j=1}^3\int_{\{c_n=0\}\cap \Omega_j}\widehat{Y}_1(\widehat{h}_1-\widehat{Y}_1)\dx  \\
                &= -\sum_{j=1}^3\int_{\{c_n=0\}\cap \Omega_j}Y_j(\widehat{h}_j-Y_j)\dx-2\int_{\{c_n=0\}}(Y_2h_3+Y_3h_2)\dx\\
                &=\sum_{j=1}^3\int_{\{c_n=0\}\cap \Omega_j}(\widehat{c}_{n,j}-Y_j)(\widehat{h}_j-Y_j)\dx -2\int_{\{c_n=0\}}(Y_2h_3+Y_3h_2)\dx.
            \end{align*}
            Summing the previous identity with \eqref{eq:proj_3}, we obtain that the left-hand side of \eqref{eq:proj_1} can be written as
            \begin{align*}
                &2\sum_{i=1}^{N_n}\int_{\{c_{n,i}>0\}}(c_{n,i}-\widehat{Y}_i)(\widehat{h}_i-\widehat{Y}_i)\dx -2\int_{\{c_n=0\}}\widehat{Y}_1(\widehat{h}_1-\widehat{Y}_1)\dx\\
                &=2\sum_{j=1}^3\Bigg(\sum_{i=1}^{N_n}\int_{\{c_{n,i}>0\}\cap \Omega_j}(\widehat{c}_{n,j}-Y_j)(\widehat{h}_j-Y_j)\dx+2\sum_{i\neq j}\int_{\{c_{n,i}>0\}\cap \Omega_j}(\widehat{c}_{n,j}-Y_j)\Big(\sum_{k\neq i,j}h_k\Big)\dx\Bigg)\\
                &\quad+2\sum_{j=1}^3\int_{\{c_n=0\}\cap \Omega_j}(\widehat{c}_{n,j}-Y_j)(\widehat{h}_j-Y_j)\dx -4\int_{\{c_n=0\}}(Y_2h_3+Y_3h_2)\dx\\
                &=2\sum_{j=1}^3\Bigg(\int_{\Omega_j}(\widehat{c}_{n,j}-Y_j)(\widehat{h}_j-Y_j)\dx +2\sum_{i\neq j}\int_{\{c_{n,i}>0\}\cap \Omega_j}(\widehat{c}_{n,j}-Y_j)\Big(\sum_{k\neq i,j}h_k\Big)\dx\Bigg)\\
                &\quad -4\int_{\{c_n=0\}}(Y_2h_3+Y_3h_2)\dx,
            \end{align*}
            where we notice that in the second-last line we have precisely left-hand side of \eqref{eq:proj_2} (up to exchanging the roles of $i$ and $j$), while the last term will simplify with a corresponding equal term on the right-hand side.

            We now work on the right-hand side of \eqref{eq:proj_1}. By splitting the integral in the first term, we get that
            \begin{align}
                &\sum_{i=1}^{N_n}\int_{\{c_{n,i}>0\}}(\widehat{h}_i-\widehat{Y}_i)^2\dx=\sum_{i=1}^{N_n}\sum_{j=1}^3\int_{\{c_{n,i}>0\}\cap\Omega_j}(\widehat{h}_i-\widehat{Y}_i)^2\dx = \notag\\
                &=\sum_{j=1}^3\Bigg(\int_{\{c_{n,j}>0\}\cap \Omega_j}(\widehat{h}_j-Y_j)^2\dx+\sum_{i\neq j}\int_{\{c_{n,i}>0\}\cap \Omega_j}(\widehat{h}_i+Y_j)^2\dx\Bigg).\label{eq:proj_5}
            \end{align}
            We now notice that, in view of \eqref{eq:proj_4}, for $i\neq j$ we have
            \begin{align*}
                (\widehat{h}_i+Y_j)^2=(\widehat{h}_i+\widehat{h}_j+Y_j-\widehat{h}_j)^2&=\Big((Y_j-\widehat{h}_j)^2+4\Big(\sum_{k\neq i,j} h_k\Big)\Big(\widehat{h}_j-Y_j+\sum_{k\neq i,j}h_k\Big)\Big) \\
                &=\Big((Y_j-\widehat{h}_j)^2+4\Big(\sum_{k\neq i,j} h_k\Big)\Big(h_j-h_i-Y_j\Big)\Big) \\
                &=\Big((Y_j-\widehat{h}_j)^2-4\Big(\sum_{k\neq i,j} h_k\Big)Y_j\Big)\leq (Y_j-\widehat{h}_j)^2,
            \end{align*}
            where in the second last step we used the fact that $h=(h_1,h_2,h_3)$ is segregated and in the last step that $h_k,Y_j\geq 0$. Hence, together with \eqref{eq:proj_5}, this tells us that
            \begin{equation}\label{eq:proj_6}
                \sum_{i=1}^{N_n}\int_{\{c_{n,i}>0\}}(\widehat{h}_i-\widehat{Y}_i)^2\dx\leq \sum_{j=1}^3\sum_{i=1}^{N_n}\int_{\{c_{n,i}>0\}\cap \Omega_j}(\widehat{h}_j-Y_j)^2\dx
            \end{equation}
            Finally, concerning the second term in the right-hand side of \eqref{eq:proj_1}, we have
            \begin{align}
                \int_{\{c_n=0\}}(\widehat{h}_1-\widehat{Y}_1)^2\dx&=\sum_{j=1}^3\int_{\{c_n=0\}\cap\Omega_j}\big((\widehat{h}_j-\widehat{Y}_j)^2+2(\widehat{h}_j\widehat{Y}_j-\widehat{h}_1\widehat{Y}_1)\big)\dx \notag \\
                &=\sum_{j=1}^3\int_{\{c_n=0\}\cap\Omega_j}(\widehat{h}_j-Y_j)^2\dx-4\int_{\{c_n=0\}}(Y_2h_3+Y_3h_2)\dx,\label{eq:proj_7}
            \end{align}
            where, in the last step, we used that 
            \begin{align*}
                &\widehat{h}_2\widehat{Y}_2-\widehat{h}_1\widehat{Y}_1=Y_2(\widehat{h}_1+\widehat{h}_2)=-2Y_2h_3\quad\text{in }\Omega_2, \\
                &\widehat{h}_3\widehat{Y}_3-\widehat{h}_1\widehat{Y}_1=Y_3(\widehat{h}_1+\widehat{h}_3)=-2Y_3h_2\quad\text{in }\Omega_3.
            \end{align*}
            Therefore, summing up \eqref{eq:proj_6} and \eqref{eq:proj_7}, we get that the right-hand side of \eqref{eq:proj_1} is smaller than or equal to the right-hand side of \eqref{eq:proj_2}, recalling that the term
            \begin{equation*}
                -4\int_{\{c_n=0\}}(Y_2h_3+Y_3h_2)\dx
            \end{equation*}
            simplifies with a corresponding one in the left-hand side. Hence \eqref{eq:proj_2} is proved. 
            We now rearrange the terms in \eqref{eq:proj_2} and divide by $2\delta_n^3$, in order to obtain that 
            \begin{equation}\label{eq:epi3}
                \sum_{i=1}^3\int_{\Omega_i}w_{n,i}\,\frac{Y_i-\widehat{h}_i}{\delta_n^2}\dx \leq  \frac{\delta_n}{2}\sum_{i=1}^3\int_{\Omega_i}\Bigg|\frac{Y_i-\widehat{h}_i}{\delta_n^2}\Bigg|^2\dx+R_n(h)\quad\text{for any }h\in\mathcal{M}_{\frac{3}{2}}(\R^d;3),
            \end{equation}
            where
            \begin{equation}\label{eq:R_n}
                R_n(h):=2\sum_{i=1}^3\sum_{j\neq i}\int_{\Omega_i\cap\{c_{n,j}>0\}}w_{n,i}\,\Big(\sum_{k\neq i,j}\frac{h_k}{\delta_n^2}\Big)\dx.
            \end{equation}
		At this point, we consider as $h$ suitable perturbations of the blow-up $Y$ (depending on the parameter $\delta_n$) and pass to the limit as $n\to\infty$. This will provide a set of orthogonality conditions which the limit $w$ must satisfy; in view of the classification, this leads to conditions on the coefficients $a_j$ and $b_j$. More precisely, we are going to choose a multiple of the blow-up limit $Y$ and the infinitesimal rotations inside the coordinate planes $x_ix_j$, with $i=1,\dots,d-1$ and $j=d-1,d$ ($i\neq j$). This way, we get a total of $2(d-1)$ equations which coincide with the degrees of freedom of the problem. We proceed in the following steps.
		\begin{enumerate}
			\item $a_0=0$: we choose $h_{n}(x)=(1\pm{\delta_n^2})Y(x_{d-1},x_d)$.
			\item $a_j=0$ for $j=1,\dots,d-2$: we choose $h_{n}(x)=Y(x_{d-1}\cos({\delta_n^2})\pm x_j\sin({\delta_n^2}),x_d)$.
			\item $b_0=0$: we choose $h_{n}(x)=Y( x_{d-1}\cos({\delta_n^2})\mp x_d\sin({\delta_n^2}), x_d\cos({\delta_n^2})\pm x_{d-1}\sin({\delta_n^2}))$.
			\item $b_j=0$ for $j=1,\dots,d-2$: we choose $h_{n}(x)=Y(x_{d-1},x_d\cos({\delta_n^2})\pm x_j\sin({\delta_n^2}))$.
		\end{enumerate}
        We notice that, if $h_n$ is as in (1)--(4), by Lipschitz continuity there holds
        \begin{equation*}
            0\leq h_{n,k}\leq C\delta_n^2\quad\text{on }\Omega_i\cap \{h_{n,k}>0\},
        \end{equation*}
        for some $C>0$ independent from $n$, for any $i\neq k$. As a consequence
        \begin{multline}\label{eq:R_n_0}
            |R_n(h_n)|\leq 2C\sum_{i=1}^3\sum_{k\neq i}\int_{\Omega_i\cap \{h_{n,k}>0\}}|w_{n,i}|\dx \\ 
            \leq 2C\sum_{i=1}^3\norm{w_{n,i}}_{L^2(\Omega_i)}\sum_{k\neq i}|\Omega_i\cap\{h_{n,k}>0\}|\to 0
        \end{multline}
        as $n\to\infty$, where we also used Cauchy-Schwarz inequality, the fact that $w_{n,i}$ is bounded in $L^2(\Omega_i)$ with respect to $n$ and that $|\Omega_i\cap\{h_{n,k}>0\}|=|\{Y_i>0\}\cap\{h_{n,k}>0\}|\to 0$ as $n\to\infty$ for $k\neq i$. This last fact holds since $h_n$ is the composition of $Y$ and a rotation at angle $\delta_n^2$ in cases (2)--(4) and the product of $Y$ by a constant in case (1). Hence, when using \eqref{eq:epi3}, we write $o(1)$ in place of $R_n(h_n)$.
		We also observe that perturbations (1) and (2) leave the last coordinate $x_d$ untouched, and so they generate conditions only on the even part of $w$. Let us now perform the various computations.
		\begin{enumerate}
			\item Plugging $h$ into \eqref{eq:epi3}, we get that
			\begin{equation*}
				\mp\sum_{i=1}^3\left(w_{n,i},Y_i\right)_{L^2(\Omega_i)}\leq \frac{\delta_n}{2}\sum_{i=1}^3\norm{Y_i}_{L^2(\Omega_i)}^2{+o(1)},
			\end{equation*}
			and, passing to the limit as $n\to\infty$, this leads to 
			\begin{equation*}
				\sum_{i=1}^3(\widetilde{w},\widetilde{Y})_{L^2(\Omega_i)}=(\widetilde{w},\widetilde{Y})_{L^2(B_1)}=0,
			\end{equation*}
			being $\widetilde{w},\widetilde{Y}\in H^1(B_1\setminus\mathcal{P})$, where we recall that $\widetilde Y=\widehat Y_2$. In view of the classification of the solutions of the linearized problem, since $\widetilde{Y}$ is even with respect to $x_d$ and independent from $x_j$, for $j=1,\dots,d-2$, the terms in $\widetilde{w}$ which linearly depend on $x_j$ do not contribute; thus, we have
			\begin{equation*}
				a_0\|\widetilde{Y}\|_{L^2(B_1)}^2=(\widetilde{w},\widetilde{Y})_{L^2(B_1)}=0,
			\end{equation*}
			which implies that $a_0=0$.
			\item First of all, we observe that
            \begin{equation*}
                \frac{Y_i-\widehat{h}_{n,i}}{\delta_n^2}\to\mp \frac{3}{2}\,x_j\,U_0
            \end{equation*}
            strongly in ${L^2}(\Omega_i)$, as $n\to\infty$, for all $i=1,2,3$ and $j=1,\dots,d-2$. Therefore, plugging $h$ into \eqref{eq:epi3}, using \eqref{eq:R_n_0} and passing to the limit as $n\to\infty$ yields
			\begin{equation*}
				\sum_{i=1}^3(\widetilde{w},x_jU_0)_{L^2(\Omega_i)}=(\widetilde{w},x_j U_0)_{L^2(B_1)}=0,
			\end{equation*}
            where $U_0$ is defined in \eqref{eq:Y_tilde}.
			On the other hand, since $\widetilde{Y}$ is even with respect to $x_d$ and since $x_j U_0$ is independent from the variable $x_k$ for $k=1,\dots, d-2$, $k\neq j$, thanks to the explicit form of $\widetilde{w}$ we infer
			\begin{equation*}
				(\widetilde{w},x_j U_0)_{L^2(B_1)}=a_j\norm{x_j U_0}_{L^2(B_1)}^2=0,
			\end{equation*}
			thus proving that $a_j=0$.
			\item By \eqref{eq:epi9} and \eqref{eq:epi8} we deduce that
                \begin{equation*}
                    \frac{Y_i-\widehat{h}_{n,i}}{\delta_n^2}\to \mp Z
                \end{equation*}
			 strongly in $L^2(\Omega_i)$, for $i=1,2,3$, as $n\to\infty$, where $Z$ is defined in \eqref{eq:epi9}.
			Hence, reasoning analogously to the previous step, from \eqref{eq:epi3}, using \eqref{eq:R_n_0} we get
			\begin{equation*}
				\left(\widetilde{w},Z\right)_{L^2(B_1)}=0.
			\end{equation*}
			Since the second term of the scalar product is odd with respect to $x_d$ and since the right term does not depend on $x_j$ for any $j=1,\dots,d-2$, in view also of the explicit form of $\widetilde{w}$, this implies that
			\begin{equation*}
	b_0\norm{Z}_{L^2(B_1)}^2=0.
			\end{equation*}
			thus implying that $b_0=0$.
			
			\item Again, in view of \eqref{eq:epi9} and \eqref{eq:epi8} we have that
                \begin{equation*}
				\frac{Y_i-\widehat{h}_{n,i}}{\delta_n^2}\to \pm\frac{3}{2}x_j V_0(x_{d-1},x_d)
			\end{equation*}
			strongly in $L^2(\Omega_i)$, for $i=1,2,3$, as $n\to\infty$, where $V_0$ is defined in \eqref{eq:epi9}. Passing to the limit in \eqref{eq:epi3}, reasoning as in step (2) and (3), and using \eqref{eq:R_n_0} we get
			\begin{equation*}
				b_j\norm{x_jV_0}_{L^2(B_1)}^2=0.
			\end{equation*}
			which forces $b_j=0$, thus completing the proof of the claim.
		\end{enumerate}

		\noindent \underline{Proof of \textbf{Step 4}.} For any $n\in\N$, we define $u_n\in H^1(B_1;\Sigma_{N_n})$ and $\bar{u}_{n,i}$, $i=1,2,3$, as in Step 1. Now, we compute the energy of $w_{n,i}$ and $\bar{u}_{n,i}$ on each $\Omega_i$ (setting $\Omega_i':=\partial\Omega_i\cap \partial B_1$, $\gamma:=3/2$ and extending $t=0$ for $r<1/2$), obtaining that
		\begin{equation*}
			\int_{\Omega_i}|\nabla w_{n,i}|^2\dx=\int_0^1 r^{d-1}|\partial_r(r^\gamma)|^2\dr\int_{\Omega_i'}w_{n,i}^2\ds+\int_0^1 r^{d+2\gamma-3}\dr\int_{\Omega_i'}|\nabla_{\partial B_1} w_{n,i}|^2\ds
		\end{equation*}
		and that
		\begin{align*}
			\int_{\Omega_i}|\nabla \bar{u}_{n,i}|^2\dx&=\int_{\Omega_i\setminus B_{\frac{1}{2}}}|\nabla (tw_{n,i})|^2\dx \\
			&=\int_0^1 r^{d-1}|\partial_r(tr^\gamma)|^2\dr\int_{\Omega_i'}w_{n,i}^2\ds+\int_0^1 r^{d+2\gamma-3}t^2\dr\int_{\Omega_i'}|\nabla_{\partial B_1} w_{n,i}|^2\ds.
		\end{align*}
		Plugging these computations into \eqref{eq:epi2} and recalling \eqref{eq:epi_beta_diff}, we get that
		\begin{multline*}
			\int_0^1r^{d+2\gamma-3}(1-t^2)\dr\sum_{i=1}^3\int_{\Omega_i'}|\nabla_{\partial B_1} w_{n,i}|^2\ds \\
			<\int_0^1r^{d-1}\left(|\partial_r(tr^\gamma)|^2-|\partial_r(r^\gamma)|^2\right)\dr\sum_{i=1}^3 \int_{\Omega_i'}w_{n,i}^2\ds\\
			\qquad- (d+1)\int_0^1r^d(1-\eps_n-t)\dr \,\frac{\beta(c_n,Y)}{\delta_n^2}+o(1),
		\end{multline*}
		as $n\to\infty$. Since
		\begin{equation*}
			\int_0^1r^{d+2\gamma-3}(1-t^2)\dr>0,\quad (d+1)\int_0^1r^d(1-\eps_n-t)\dr \,\frac{\beta(c_n,Y)}{\delta_n^2}\geq 0
		\end{equation*}
		for $n$ sufficiently large and since, by compact trace embedding,
		\begin{equation*}
			w_{n,i}\to 0\quad\text{strongly in }L^2(\Omega_i'),~\text{as }n\to\infty,
		\end{equation*}
		we deduce that $w_{n,i}\to 0$ strongly in $H^1(\Omega_i)$, for all $i=1,2,3$, thus reaching a contradiction and concluding the proof.
	\end{proof}
	
	\section{Regularity of the free boundary around points of frequency $\sfrac32$}\label{sec:regularity}

    In this section we complete the proof of \cref{t:main-main} by proving the regularity of the free interface $\mathcal F(u)$ around points of frequency $\sfrac32$. The main ingredients are the classification of the $\sfrac32$-homogeneous blow-ups (see \Cref{prop:32-homogeneous-classification}) and the epiperimetric inequality \cref{t:epi}.
    
	\subsection{Rate of convergence of the blow-up sequence}\label{sub:uniqueness-of-the-blow-up}
    We will show that we can apply the epiperimetric inequality to the rescalings of $u$ of the form $u^{r,x_0}(x)=r^{-\sfrac32}u(x_0+rx)$. In order to do so, we need to show that the conditions of \cref{t:epi} are fulfilled uniformly, at all points $x_0$ and at all small scales $r$, by multiples of $u^{r,x_0}$. We start with the following.
	\begin{lemma}[Uniform Hausdorff distance estimate]\label{lemma:eps_reg_outside}
		Let $d\ge 2$ and $N\ge 1$ be fixed. For any $\eps>0$ there exists $\delta>0$ (depending on $d$, $N$ and $\eps$) such that, if $u\in H^1(B_1;\Sigma_N)\cap C^{0,1}(B_1;\R^N)$ is a minimizer, $u\in \mathcal M(B_1;N)$, that satisfies
		\begin{equation*}
			\norm{\d_{\Sigma_N}(u,Y)}_{L^\infty(B_1)}\leq \delta,
		\end{equation*}
		then
		\begin{equation}\label{e:hausdorff-1}
			\sum_{i=1}^3\d_{\mathcal{H}}\Big(\{u_i>0\}\cap B_{\sfrac12},\{Y_i>0\}\cap B_{\sfrac12}\Big)\leq\eps,
            		\end{equation}
and
            		\begin{equation}\label{e:hausdorff-2}
            \sum_{i=4}^N\d_{\mathcal{H}}\Big(\{u_i>0\}\cap B_{\sfrac12},\{x_{d-1}=x_d=0\}\cap B_{\sfrac12}\Big)\leq \eps.
		\end{equation}
		Moreover, in $B_1\cap \left\{x\in\R^d\colon \sqrt{x_{d-1}^2+x_d^2}>\eps\right\}$ the free boundary $\mathcal{F}(u)$ is the disjoint union of the three surfaces $\Gamma_{ij}=\partial\{u_i>0\}\cap\partial\{u_j>0\}$, for $i,j\in\{1,2,3\}$, $i\neq j$, which are $C^{1,\alpha}$ graphs over $\partial \{Y_i>0\}\cap\partial\{Y_j>0\}$.
	\end{lemma}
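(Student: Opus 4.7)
The plan is a standard compactness contradiction combined with the $\varepsilon$-regularity theorem for the regular stratum of minimizers.

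Suppose the conclusion fails at some level $\varepsilon_0>0$: there exist minimizers $u_n\in\mathcal M(B_1;N_n)$ with $\|\mathrm d_{\Sigma_{N_n}}(u_n,Y)\|_{L^\infty(B_1)}\to 0$ for which one of the stated conclusions is violated. From the pointwise identity $\mathrm d_{\Sigma_{N_n}}(u_n,Y)=\sum_j|u_{n,j}-Y_j|$ of \Cref{lemma:dist} we obtain $u_{n,i}\to Y_i$ uniformly on $\overline B_1$ for $i=1,2,3$ and $u_{n,j}\to 0$ uniformly for $j\ge 4$. The interior Lipschitz bounds available on $\mathcal M$ together with standard compactness of minimizers upgrade this to strong convergence in $H^1_{\mathrm{loc}}(B_1)$.

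The Hausdorff estimates \eqref{e:hausdorff-1}--\eqref{e:hausdorff-2} then follow from the uniform convergence combined with the segregation constraint. One direction is immediate: any $x\in\{Y_i>0\}\cap\overline B_{1/2}$ can be slightly perturbed to $x'$ with $Y_i(x')>0$, forcing $u_{n,i}(x')>0$ for large $n$. For the reverse direction, suppose $x_n\in\{u_{n,i}>0\}\cap\overline B_{1/2}$ stays at distance $\ge\varepsilon_0$ from $\{Y_i>0\}$ (respectively, from the axis if $i\ge 4$). Passing to a subsequence $x_n\to x_\infty$: if $Y_j(x_\infty)>0$ for some $j\ne i$, the uniform convergence and segregation give an immediate contradiction; otherwise $x_\infty$ lies on the two-phase interface of $Y$ opposite to phase $i$, off the axis, and the classical non-degeneracy of minimizers (cf.\ \cite{CL2007,TerraciniTavares2012}) combined with the uniform convergence of $u_{n,j}$ and $u_{n,k}$ to $Y_j$ and $Y_k$ on a neighborhood of $x_\infty$ forces $u_{n,i}\equiv 0$ there for $n$ large, contradicting $x_n\in\{u_{n,i}>0\}$.

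The $C^{1,\alpha}$ regularity statement outside the $\varepsilon$-neighborhood of the axis follows from the $\varepsilon$-regularity theorem for the regular (frequency-one) stratum of minimizers, which is by now classical \cite{CL2007,TerraciniTavares2012,ST-opt-gap}. At every $x_0\in B_1$ with $\sqrt{x_{0,d-1}^2+x_{0,d}^2}>\varepsilon_0$, the map $Y$ is either positive in exactly one phase (so $\mathcal F(Y)$ is empty near $x_0$) or lies on a two-phase regular interface $\partial\{Y_i>0\}\cap\partial\{Y_j>0\}$ whose unique blow-up is the $1$-homogeneous antisymmetric harmonic profile. The $L^\infty$-closeness of $u_n$ to $Y$, promoted through the strong $H^1_{\mathrm{loc}}$ convergence to the flatness hypothesis of the $\varepsilon$-regularity theorem on a ball of size comparable to $\varepsilon_0$ centered at $x_0$, then produces the three $C^{1,\alpha}$ graphs $\Gamma_{ij}$ over $\partial\{Y_i>0\}\cap\partial\{Y_j>0\}$, contradicting the failure assumption. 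The main technical obstacle is this last step: certifying that the $L^\infty$-hypothesis quantifies into the flatness input required by $\varepsilon$-regularity uniformly over the whole annular region $B_1\cap\{\sqrt{x_{d-1}^2+x_d^2}>\varepsilon_0\}$ with the same value of the parameter $\delta$.
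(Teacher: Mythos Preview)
Your approach is correct and essentially identical to the paper's: a compactness contradiction, uniform convergence for one direction of the Hausdorff estimate, and the two-phase $\varepsilon$-regularity lemma (the paper cites \cite[Lemma~9.8]{OV}) for both the reverse inclusion and the $C^{1,\alpha}$-graph conclusion. Two unnecessary complications in your sketch: the $H^1_{\mathrm{loc}}$ upgrade is not needed (only $L^\infty$-closeness is used as input to $\varepsilon$-regularity), and the ``main technical obstacle'' you flag about uniform quantification of $\delta$ over the annulus is automatically absorbed by the contradiction framework.
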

	\begin{proof}
		Suppose by contradiction that there are $\eps>0$ and a sequence $u_n\in H^1(B_1;\Sigma_N)\cap C^{0,1}(B_1;\R^N)\cap \mathcal M(B_1;N)$ such that 
		$$\norm{\d_{\Sigma_N}(u_n,Y)}_{L^\infty(B_1)}=\delta_n\to0,$$
		and for which \eqref{e:hausdorff-1} or \eqref{e:hausdorff-2} fail. For any $\tau>0$, we denote
		\begin{align*}
			\mathcal{O}_\tau&:=\{x\in\R^d\colon \dist (x,\{Y=0\})<\tau\}, \\
			C_\tau&:=\left\{x\in\R^d\colon \sqrt{x_{d-1}^2+x_d^2}<\tau\right\}.
		\end{align*}
		Notice there exists $t=t_\eps>0$ such that $\d_{\Sigma_N}(Y,0)>t$ on the set $B_1\setminus\mathcal O_{\eps/6}.$ On the other hand, by the uniform convergence of $u_n$ to $Y$ we get that for $n$ large enough 
		$$\{Y_i>0\}\cap B_1\setminus\mathcal O_{\eps/6}\subset\{Y_i>t\}\cap B_1\subset \{u_{n,i}>0\}\cap B_1\quad\text{for}\quad i=1,2,3.$$ 
		In particular, this excludes the possibility that \eqref{e:hausdorff-1} fails for $u_n$ as $n\to+\infty$. In order to show that \eqref{e:hausdorff-2} cannot fail for large $n$, we notice that for every ball $B_{\eps/3}(x)$ with center $x\in B_{\sfrac12}\cap \partial\{Y_i>0\}\cap\partial\{Y_j>0\}\setminus C_{\eps/2}$ (for $i,j\in\{1,2,3\}$ with $i\neq j$), we have that $Y=Y_i\vec{\bm{e}}_i+Y_j\vec{\bm{e}}_j$ and
		\begin{equation*}
			\norm{\d_{\Sigma_N}(u_n,Y)}_{L^\infty(B_{\eps/3}(x))}\to 0\quad \text{as }n\to\infty.
		\end{equation*}
		Thus, by the epsilon-regularity lemma \cite[Lemma 9.8]{OV}, 
		we get that $u_k\equiv 0$ in $B_{\eps/6}(x)$ for all $k\neq i,j$ and $\mathcal F(u)\cap B_{\eps/6}(x)=\Gamma_{ij}\cap B_{\eps/6}(x)$ is a $C^{1,\alpha}$ graph (with small $C^1$ norm) over the hyperplane $\partial\{Y_i>0\}\cap\partial\{Y_j>0\}$. In particular, this implies that \eqref{e:hausdorff-2} cannot fail and proves that second part of the claim.
	\end{proof}

	\begin{lemma}[Uniform convergence of rescalings with variable center]\label{l:uniform-rescalings}
		Let $u\in \mathcal{M}(B_1;N)$ be such that $\gamma(u,0)=\sfrac{3}{2}$. Then, for all $\eps>0$ there exists $\rho,r_0>0$ (depending on $d$, $N$ and $\eps$) such that for all $z\in B_\rho\cap \mathcal{F}_{\sfrac{3}{2}}(u)$ and for all $r<r_0$ there exists $Y\in\mathcal{M}_{\sfrac{3}{2}}(\R^d;3)$ such that
		\begin{equation*}
			\norm{\d_{\Sigma_N}(u_{z,r},Y)}_{L^\infty(B_{\sfrac12})}\leq \eps,
		\end{equation*}
		where $u_{z,r}(x):=H(u,z,r)^{-\sfrac{1}{2}}u(rx+z)$.
	\end{lemma}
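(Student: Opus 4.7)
My plan is to argue by contradiction, pass to a blow-up limit of the rescalings $u_{z_n,r_n}$, and then invoke the classification of $\sfrac32$-homogeneous tangent maps from \Cref{prop:32-homogeneous-classification}.

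Suppose the conclusion fails. Then there exist $\varepsilon_0>0$, a sequence $z_n\in\mathcal{F}_{\sfrac32}(u)$ with $z_n\to 0$, and $r_n\to 0^+$ such that the rescalings $v_n:=u_{z_n,r_n}$ satisfy
\[
\|\d_{\Sigma_N}(v_n,Y)\|_{L^\infty(B_{\sfrac12})}>\varepsilon_0\quad\text{for every}\quad Y\in\mathcal M_{\sfrac32}(\R^d;3).
\]
By construction $H(v_n,0,1)=1$, and each $v_n$ is a local minimizer on a ball $B_{R_n}$ with $R_n\to\infty$. For any fixed $R>0$, Almgren's monotonicity combined with the identity $\mathcal N(v_n,0,R)=\mathcal N(u,z_n,r_nR)$ yields
\[
\tfrac32\le \mathcal N(v_n,0,R)\le \mathcal N(u,z_n,\rho_0)
\]
for every small fixed $\rho_0>0$ and $n$ large enough that $r_nR<\rho_0$. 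The right-hand side converges to $\mathcal N(u,0,\rho_0)$ by continuity of $z\mapsto E(u,z,\rho_0)/H(u,z,\rho_0)$ at $z=0$. Hence the Dirichlet energies of $v_n$ are uniformly bounded on every ball, and by the uniform Lipschitz regularity of minimizers recalled in \Cref{sec:intro-notations}, the $v_n$ are also uniformly bounded in $C^{0,1}_{\mathrm{loc}}(\R^d)$.

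Next I would extract a subsequence converging, uniformly on compact sets and weakly in $H^1_{\mathrm{loc}}(\R^d)$, to some $u_\infty$; standard stability of energy minimizers under such convergence gives $u_\infty\in\mathcal M(\R^d;N)$ with $H(u_\infty,0,1)=1$. To identify $u_\infty$ as $\sfrac32$-homogeneous, I compute $\mathcal N(u_\infty,0,R)$. The lower bound $\mathcal N(u_\infty,0,R)\ge\sfrac32$ is inherited from $\mathcal N(v_n,0,R)\ge\sfrac32$ via the convergence. For the matching upper bound, fix $\eta>0$; for $n$ large enough that $r_nR<\eta$, monotonicity and continuity of the frequency in the center give
\[
\mathcal N(u,z_n,r_nR)\le\mathcal N(u,z_n,\eta)\longrightarrow \mathcal N(u,0,\eta)\quad\text{as }n\to\infty,
\]
and $\mathcal N(u,0,\eta)\to\sfrac32$ as $\eta\to 0$ because $\gamma(u,0)=\sfrac32$. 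Thus $\mathcal N(u_\infty,0,\cdot)\equiv\sfrac32$, which by the equality case of Almgren's monotonicity forces $u_\infty$ to be $\sfrac32$-homogeneous. By \Cref{prop:32-homogeneous-classification}, after relabeling components and rotating coordinates, $u_\infty\in\mathcal M_{\sfrac32}(\R^d;3)$. Taking $Y=u_\infty$ and using \Cref{lemma:dist} together with the uniform convergence on $B_{\sfrac12}$ then gives $\|\d_{\Sigma_N}(v_n,u_\infty)\|_{L^\infty(B_{\sfrac12})}\to 0$, which contradicts the choice of $\varepsilon_0$.

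The main technical subtlety is the simultaneous double limit $(z_n,r_n)\to(0,0)$ in the homogeneity step: one must combine continuity of the frequency in the base point (at a fixed, small scale $\eta$) with Almgren monotonicity in the scale $r$, in order to squeeze $\mathcal N(u,z_n,r_nR)$ between $\sfrac32$ and $\sfrac32+o(1)$. A secondary, more routine, point is to upgrade weak $H^1_{\mathrm{loc}}$-convergence to uniform convergence on $B_{\sfrac12}$, which follows from the uniform Lipschitz bounds on minimizers in $\mathcal M(B_1;N)$ coming from \cite{TerraciniTavares2012,CL2007,ST-opt-gap}.
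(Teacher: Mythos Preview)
Your proposal is correct and follows essentially the same approach as the paper: argue by contradiction, use Almgren monotonicity together with continuity of the frequency in the center to squeeze $\mathcal N(u_{z_n,r_n},0,R)$ between $\sfrac32$ and $\sfrac32+o(1)$, extract a convergent subsequence via the uniform Lipschitz bounds, identify the limit as a $\sfrac32$-homogeneous minimizer, and conclude via the classification \Cref{prop:32-homogeneous-classification}. Your handling of the double limit $(z_n,r_n)\to(0,0)$ is in fact slightly more explicit than the paper's.
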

	\begin{proof}
		Let us assume by contradiction that there exists sequences $r_n\to 0$ and $z_n\in\mathcal{F}_{\sfrac{3}{2}}(u)$, with $|z_n|\to 0$, such that
		\begin{equation}\label{eq:compact2}
			\norm{\d_{\Sigma_N}(u_{z_n,r_n},Y)}_{L^\infty(B_{\sfrac12})}>\eps\quad\text{for all }n\in\N
		\end{equation}
		for all $Y\in\mathcal{M}_{\sfrac{3}{2}}(\R^d;3)$, for some $\eps>0$, and that $\gamma(u_{z_n,r_n},0)=\sfrac{3}{2}$ for all $n\in\N$. First of all, we observe that
		\begin{equation}\label{eq:compact1}
			\sum_{i=1}^N\int_{B_1}|\nabla (u_{z_n,r_n})_i|^2\dx=\mathcal{N}(u_{z_n,r_n},0,1).
		\end{equation}
		Thus, up to a subsequence
		\begin{equation}\label{eq:compact3}
			u_{z_n,r_n}\to u_0\quad\text{strongly in }H^1(B_1;\Sigma_N)~\text{and uniformly in }B_{\sfrac12},
		\end{equation}
		for some $u_0\in H^1(B_1;\Sigma_N)$, $u_0\not\equiv 0$. Moreover, since $u_{z_n,r_n}\in \mathcal{M}(B_1;N)$ for all $n$, also $u_0\in\mathcal{M}(B_1;N)$. We now observe that, since $z_n\in\mathcal{F}_{\sfrac{3}{2}}(u)$ and by monotonicity, we have
        \begin{equation*}
            \frac{3}{2}=\mathcal{N}(u,z_n,0^+)\leq \mathcal{N}(u,z_n,Rr_n)=\mathcal{N}(u_{z_n,r_n},0,R),
        \end{equation*}
        for any $R>0$ and $n$ sufficiently large. Passing to the limit as $n\to\infty$, we get
        \begin{equation*}
            \frac{3}{2}\leq \mathcal{N}(u_0,0,R)\quad\text{for all }R>0.
        \end{equation*}
        By the continuity of $x\mapsto \mathcal{N}(u,x,r)$, the monotonicity of $r\mapsto \mathcal{N}(u,x,r)$ and \Cref{l:optimal-gap-main-lemma}, we have that for all $R,\tau>0$ sufficiently small, there holds
		\begin{equation*}
			\frac{3}{2}\leq \mathcal{N}(u_0,0,R)\leq \mathcal{N}(u,0,\tau R).
		\end{equation*}
		Sending $\tau\to 0$ implies that $u_0$ is $\sfrac{3}{2}$-homogeneous, which, in view of \Cref{l:optimal-gap-main-lemma} and \eqref{eq:compact3} contradicts \eqref{eq:compact2}.	
	\end{proof}

We are now in position to prove the uniqueness of the blow-up limit and rate of convergence of the blow-up sequence.

    \begin{proposition}\label{prop:rate}
		Let $\alpha:=\eps(d+1)$, with $\eps>0$ being as in \Cref{t:epi} and let $u\in\mathcal{M}(B_1;N)$. Then, for all compact $K\sub B_1$ there exists $R>0$ such that for all $x_0\in K\cap\mathcal{F}_{\sfrac{3}{2}}(u)$, there exists $C_\tu{rate}>0$ depending on $d$ and $\sup_{x\in K}\mathcal{N}(u,x,\d_{\mathcal{H}}(K,\partial B_1))$ such that
		\begin{equation*}
			\norm{\d_{\Sigma_N}(u^{x_0,r_2},u^{x_0,r_1})}_{L^2(\partial B_1)}^2\leq\sum_{i=1}^N\norm{u_i^{x_0,r_2}-u_i^{x_0,r_1}}_{L^2(\partial B_1)}^2\leq C_\tu{rate}(r_2^\alpha-r_1^\alpha)
		\end{equation*}
		for all $0<r_1<r_2<R$ where $u^{x_0,r}(x):=r^{-\sfrac{3}{2}}u(rx+x_0)$. In particular, there exists $Y^{x_0}\in\mathcal{M}_{\sfrac{3}{2}}(\R^d;3)$ such that
		\begin{equation}\label{eq:rate_L2}
			\norm{\d_{\Sigma_N}(u^{x_0,r},Y^{x_0})}_{L^2(\partial B_1)}^2\leq \sum_{i=1}^N\norm{u_i^{x_0,r}-Y^{x_0}_i}_{L^2(\partial B_1)}^2\leq C_\tu{rate}r^\alpha,
		\end{equation}
		for all $r<R$. Moreover, we have that
        \begin{equation}\label{eq:rate_infty}
            \norm{\d_{\Sigma_N}(u^{x_0,r},Y^{x_0})}_{L^\infty( B_1)}^2\leq C_\tu{rate}'r^{\alpha'},
        \end{equation}
        for some $C_\tu{rate}',\alpha'>0$ depending on $d$ and $\sup_{x\in K}\mathcal{N}(u,x,\d_{\mathcal{H}}(K,\partial B_1))$.
	\end{proposition}
	\begin{proof}
    Thanks to \cref{lemma:eps_reg_outside} and \cref{l:uniform-rescalings}, there is a radius $R$ such that we can apply the epiperimetric inequality to all the rescaling of the form $u^{x_0,r}(x)=r^{-\sfrac32}u(x_0+rx)$ with $r\le R$ and  $x_0\in K$.
    The uniqueness of the blow-up and the rate of convergence now follow by a standard argument (see e.g. \cite[Proposition 8.1]{OV}). Finally, \eqref{eq:rate_infty} follows, after integrating \eqref{eq:rate_L2}, by a standard $L^\infty-L^2$ estimate for uniformly Lipschitz functions (for the proof see e.g. \cite[Lemma 8.4]{OV}) and the Lipschitz estimate for minimizers (see \cite[Theorem 2.4.6]{KS}).
    \end{proof}

As a corollary, we also obtain the following.
	
	\begin{proposition}[Oscillation of the $\sfrac32$-blow-up limits]\label{lemma:osc}
		Let $\bar{\alpha}:=2\alpha/(\alpha+3)$, where $\alpha$ is as in \Cref{prop:rate} and let $u\in\mathcal{M}(B_1;N)$. Then, for all compact $K\sub B_1$ there exists $r_0>0$ (depending on $d$ and $K$) and $C_{\tu{osc}}>0$ (depending on $d$ and $\sup_{x\in K}\mathcal{N}(u,x,\d_{\mathcal{H}}(K,\partial B_1))$) such that
		\begin{equation*}
			\sum_{i=1}^3\norm{Y^{x_0}_i-Y^{z_0}_i}_{L^2(\partial B_1)}^2\leq C_\tu{osc} |x_0-z_0|^{\bar{\alpha}}
		\end{equation*}
		for all $x_0,z_0\in\mathcal{F}_{\sfrac{3}{2}}(u)\cap K$ such that $|x_0-z_0|<r_0$, where
		\begin{equation*}
			Y^p=\lim_{r\to 0}u^{p,r}\quad\text{and}\quad u^{p,r}(x):=r^{-\frac{3}{2}}u(rx+p).
		\end{equation*}
	\end{proposition}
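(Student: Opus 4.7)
The plan is to combine the rate of convergence from \Cref{prop:rate} at both centers with a direct Lipschitz comparison of the two rescalings at a well-chosen intermediate scale.

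Fix $x_0,z_0\in\mathcal F_{\sfrac32}(u)\cap K$ and an auxiliary scale $r<R$ (with $R$ as in \Cref{prop:rate}), to be optimized below. Starting from the triangle inequality
\begin{equation*}
\sum_{i=1}^3\norm{Y^{x_0}_i-Y^{z_0}_i}_{L^2(\partial B_1)}^2\leq 3\sum_{i=1}^3\Big(\norm{Y^{x_0}_i-u^{x_0,r}_i}_{L^2(\partial B_1)}^2+\norm{u^{x_0,r}_i-u^{z_0,r}_i}_{L^2(\partial B_1)}^2+\norm{u^{z_0,r}_i-Y^{z_0}_i}_{L^2(\partial B_1)}^2\Big),
\end{equation*}
the first and third terms on the right are at most $C_\tu{rate}\,r^\alpha$ by \Cref{prop:rate}. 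For the middle term I would invoke the local Lipschitz continuity of $u$ (recalled in \Cref{sub:intro-blow-up}) on a compact neighborhood $K'\Subset B_1$ of $K$: writing $L$ for its Lipschitz constant on $K'$ and choosing $r$ small enough that $x_0+rx,\,z_0+rx\in K'$ for every $x\in\overline{B_1}$, the pointwise bound $|u(x_0+rx)-u(z_0+rx)|\leq L|x_0-z_0|$ after dividing by $r^{\sfrac32}$ and integrating over $\partial B_1$ gives
\begin{equation*}
\sum_{i=1}^3\norm{u^{x_0,r}_i-u^{z_0,r}_i}_{L^2(\partial B_1)}^2\leq C\,r^{-3}|x_0-z_0|^2.
\end{equation*}

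Combining the three estimates yields
\begin{equation*}
\sum_{i=1}^3\norm{Y^{x_0}_i-Y^{z_0}_i}_{L^2(\partial B_1)}^2\leq C\big(r^\alpha+r^{-3}|x_0-z_0|^2\big),
\end{equation*}
and balancing the two terms by choosing $r=|x_0-z_0|^{2/(\alpha+3)}$ produces exactly the exponent $\bar\alpha=2\alpha/(\alpha+3)$. The threshold $r_0$ is then dictated by requiring this optimal $r$ to be smaller than $R$ and to keep both translates inside $K'$, which forces $|x_0-z_0|$ to be small in terms of $d$ and $\dist(K,\partial B_1)$.

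The argument is essentially an optimization; the only delicate point is the consistency of the labeling of the non-vanishing components of $Y^{x_0}$ and $Y^{z_0}$ among the $N$ indices of $u$. This is handled through the Hausdorff stability in \Cref{lemma:eps_reg_outside} applied to the rescalings $u^{x_0,r}$ and $u^{z_0,r}$ at the common scale $r$ above: for $r$ small and $|x_0-z_0|\ll r$, both rescalings are close in $L^\infty$ to $Y$-configurations whose supports agree on the same three angular sectors, hence the three non-vanishing components correspond to the same three indices of $u$, while the remaining components of $u$ vanish identically in a neighborhood and contribute trivially to the sum from $1$ to $3$.
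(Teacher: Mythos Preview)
Your argument is correct and follows the same route as the paper: triangle inequality, the rate bound from \Cref{prop:rate} for the two end terms, a Lipschitz comparison for the cross term, and the choice $r=|x_0-z_0|^{2/(\alpha+3)}$ to balance $r^\alpha$ against $r^{-3}|x_0-z_0|^2$. The only cosmetic difference is that the paper runs the intermediate sums over all $N$ components (which automatically handles the labeling issue you discuss at the end, since $Y^{x_0}_i=Y^{z_0}_i=0$ for $i\ge4$), whereas you keep the sum over three indices and add the separate remark on label consistency via \Cref{lemma:eps_reg_outside}.
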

	\begin{proof}
		We first estimate
		\begin{multline}\label{eq:osc_1}
			\sum_{i=1}^3\norm{Y^{x_0}-Y^{z_0}}_{L^2(\partial B_1)}^2\leq 2\sum_{i=1}^N\Big(\norm{u^{x_0,r}_i-Y^{x_0}_i}_{L^2(\partial B_1)}^2 \\
			+\norm{u^{z_0,r}_i-Y^{z_0}_i}_{L^2(\partial B_1)}^2+\norm{u^{x_0,r}_i-u^{z_0,r}_i}_{L^2(\partial B_1)}^2\Big).
		\end{multline}
		The first two terms can be bounded in view of \Cref{prop:rate} as follows
		\begin{equation}\label{eq:osc_2}
			\sum_{i=1}^N\left(\norm{u^{x_0,r}_i-Y^{x_0}_i}_{L^2(\partial B_1)}^2+\norm{u^{z_0,r}_i-Y^{z_0}_i}_{L^2(\partial B_1)}^2\right)\leq C\,r^\alpha,
		\end{equation}
		for $r$ sufficiently small depending on $d$ and $\d_{\mathcal{H}}(K,\partial B_1)$ and $C>0$ depending on $d$ and $\sup_{x\in K}\mathcal{N}(u,x,r_0)$.
		On the other hand, if $L_K>0$ denotes the Lipschitz constant of $u$ in $K$, we have that
		\begin{equation}\label{eq:osc_3}
			\sum_{i=1}^N\norm{u^{x_0,r}_i-u^{z_0,r}_i}_{L^2(\partial B_1)}^2\leq \frac{L_K^2}{r^3}|x_0-z_0|^2.
		\end{equation}
		Therefore, we can choose $r$ in such a way that
		\begin{equation*}
			r^\alpha=\frac{|x_0-z_0|^2}{r^3},\quad\text{that is}\quad r=|x_0-z_0|^\frac{2}{\alpha+3}.
		\end{equation*}
		The claim follows by combining \eqref{eq:osc_3} and \eqref{eq:osc_2} with \eqref{eq:osc_1}.
	\end{proof}

\subsection{Regularity of $\mathcal F_{\sfrac32}$}\label{sub:proof-regularity}
In this section we conclude the proof of \cref{t:main-main}. 

\begin{lemma}[Frequency gap from above]\label{lemma:gap_from_above}
		There exists $\delta_d>0$ such that, if $u\in \mathcal{M}_\gamma(\R^d;N)$ is a $\gamma$-homogeneous minimizer with $\gamma>3/2$, then there holds $\gamma\geq 3/2+\delta_d$.
	\end{lemma}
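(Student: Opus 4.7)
The plan is a contradiction argument combining compactness, the classification of $\sfrac{3}{2}$-homogeneous minimizers (\Cref{prop:32-homogeneous-classification}), and the epiperimetric inequality (\Cref{t:epi}). Suppose for contradiction that there exist $\gamma_n\searrow\sfrac{3}{2}$ with $\gamma_n>\sfrac{3}{2}$ and nontrivial $u_n\in\mathcal{M}_{\gamma_n}(\R^d;N_n)$; after a harmless rescaling of each $u_n$ by a positive constant I may assume $\|u_n\|_{L^2(\partial B_1)}=\|Y\|_{L^2(\partial B_1)}$. The Almgren identity $\int_{B_1}|\nabla u_n|^2=\gamma_n\int_{\partial B_1}|u_n|^2$ (a consequence of $\gamma_n$-homogeneity) then gives a uniform $H^1$-bound. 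Combining this with the uniform Lipschitz regularity of segregated minimizers, \Cref{lemma:eps_reg_outside}, and \Cref{prop:32-homogeneous-classification}, I pass to a subsequence and apply a rotation and a relabeling of the components so that $u_n\to Y$ strongly in $H^1(B_1)$, uniformly on $B_1$, and with Hausdorff convergence of the positivity sets $\{u_{n,i}>0\}$.

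Next I apply the epiperimetric inequality to the $\sfrac{3}{2}$-homogeneous extension $c_n$ of the trace $g_n:=u_n|_{\partial B_1}$; for $n$ sufficiently large the convergence above ensures the hypotheses of \Cref{t:epi} are satisfied. The theorem produces a nonnegative competitor $v_n$ with $v_n=c_n=u_n$ on $\partial B_1$ and $W_{\sfrac{3}{2}}(v_n)\leq (1-\varepsilon)W_{\sfrac{3}{2}}(c_n)$. Since the boundary traces agree, the minimality of $u_n$ in $B_1$ gives $E(u_n,B_1)\leq E(v_n,B_1)$ and hence
\begin{equation*}
W_{\sfrac{3}{2}}(u_n)\leq W_{\sfrac{3}{2}}(v_n)\leq (1-\varepsilon)W_{\sfrac{3}{2}}(c_n).
\end{equation*}

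The final step is a direct computation of the two Weiss energies using the $\gamma_n$-homogeneity of $u_n$. The Almgren identity above is equivalent to the relation $\|\nabla_{\partial B_1} g_n\|_{L^2(\partial B_1)}^2=\gamma_n(d+\gamma_n-2)\|g_n\|_{L^2(\partial B_1)}^2$. Integrating $r^{2\gamma-2}$ in polar coordinates for $u_n=r^{\gamma_n}g_n(\theta)$ and $c_n=r^{\sfrac{3}{2}}g_n(\theta)$, and setting $\delta_n:=\gamma_n-\sfrac{3}{2}$ and $A_n:=\|g_n\|_{L^2(\partial B_1)}^2$, one obtains
\begin{equation*}
W_{\sfrac{3}{2}}(u_n)=\delta_n A_n\quad\text{and}\quad W_{\sfrac{3}{2}}(c_n)=\delta_n A_n+\frac{\delta_n^2 A_n}{d+1}.
\end{equation*}
Inserting these into the epiperimetric estimate and dividing by $\delta_n A_n>0$ gives the uniform lower bound $\delta_n\geq (d+1)\varepsilon/(1-\varepsilon)$, contradicting $\delta_n\to 0$. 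The dimensional gap constant is then $\delta_d:=(d+1)\varepsilon/(1-\varepsilon)$, with $\varepsilon$ the constant from \Cref{t:epi}.

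The main delicate point is the verification of the closeness hypotheses of \Cref{t:epi} for $c_n$. The $H^1$-closeness of $\d_{\Sigma_{N_n}}(c_n,Y)$ follows from the strong $H^1(B_1)$-convergence $u_n\to Y$, itself a consequence of weak $H^1$-convergence together with the convergence of the Dirichlet energies $E(u_n,B_1)=\gamma_n\to\sfrac{3}{2}=E(Y,B_1)$. The Hausdorff convergence of the positivity sets comes from \Cref{lemma:eps_reg_outside} applied away from the singular axis $\{x_{d-1}=x_d=0\}$, while near this axis the classification \Cref{prop:32-homogeneous-classification} forces the vanishing of the components $u_{n,i}$ with $i\geq 4$ in the limit.
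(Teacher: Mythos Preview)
Your proof is correct and follows precisely the paper's approach: a contradiction argument using compactness and \Cref{prop:32-homogeneous-classification} to place the rescaled sequence close to $Y$, then invoking \Cref{lemma:eps_reg_outside} and \Cref{t:epi} and comparing with the minimality of $u_n$. Your explicit computation of $W_{\sfrac{3}{2}}(u_n)=\delta_n A_n$ and $W_{\sfrac{3}{2}}(c_n)=\delta_n A_n+\delta_n^2 A_n/(d+1)$, yielding the quantitative gap $\delta_d=(d+1)\varepsilon/(1-\varepsilon)$, makes precise what the paper compresses into the single clause ``this is a contradiction with the minimality of $u_n$''.
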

\begin{proof}
Suppose that this is not the case and that there is a sequence of $\gamma_n$ homogeneous solutions $u_n$ with $\gamma_n\to\sfrac32$. Then, up to a subsequence, $u_n$ converges to a $\sfrac32$-homogeneous solution $Y$. By \cref{lemma:eps_reg_outside}, for $n$ large enough we can apply the epiperimetric inequality from \cref{t:epi} to $u_n$. But this is a contradiction with the minimality of $u_n$. 
\end{proof}

	\begin{lemma}[No holes lemma]\label{l:no-holes}
		For any $\eps,\rho\in(0,\sfrac{1}{2})$ there exists $\delta>0$ (depending on $d$, $N$ and $\eps$) such that, if $u\in H^1(B_1;\Sigma_N)\cap C^{0,1}(B_1;\R^N)\cap\mathcal{M}(B_1;N)$ satisfies
		\begin{equation*}
			\norm{\d_{\Sigma_N}(u,Y)}_{L^\infty(B_1)}\leq \delta\quad\text{and}\quad \sup_{x\in B_{\frac{1}{2}}}\mathcal{N}\left(u,x,\frac{1}{2}\right)<\frac{3}{2}+\delta,
		\end{equation*}
		then for all $x''\in B_\rho'':=\{y\in\R^{d-2}\colon |y|<\rho\}$ there exists $x=(x'',x_{d-1},x_d)$ such that 
		$$\sqrt{x_{d-1}^2+x_d^2}<\eps\qquad\text{and}\qquad \gamma(u,x)=\sfrac{3}{2}.$$
	\end{lemma}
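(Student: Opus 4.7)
Argue by contradiction. Fix $x_0''\in B_\rho''$ and suppose that the open $2$-disk
\[
D := \{x_0''\}\times\bigl\{(x_{d-1},x_d): x_{d-1}^2+x_d^2<\eps^2\bigr\}
\]
contains no point of $\mathcal{F}_{\sfrac{3}{2}}(u)$. The plan is to apply the topological \cref{l:topological-lemma} with $\mathcal M=D$ (a simply connected $2$-disk) and $\mathcal F=\mathcal F(u)\cap D$, to the circle
\[
\ell:=\{x_0''\}\times\bigl\{(x_{d-1},x_d):x_{d-1}^2+x_d^2=(\eps/2)^2\bigr\}\subset D,
\]
which winds once around $(x_0'',0,0)$; the parity statement will then contradict an explicit count of exactly $3$ crossings of $\ell$ with $\mathcal F(u)\cap D$ coming from the closeness of $u$ to the triple junction $Y$.

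First I reduce the possible frequencies in $\mathcal F(u)\cap B_{\sfrac12}$. By Almgren monotonicity, $\gamma(u,x)=\mathcal N(u,x,0^+)\leq \mathcal N(u,x,\sfrac12)<\sfrac32+\delta$ for every $x\in\mathcal F(u)\cap B_{\sfrac12}$. Choosing $\delta<\delta_d$ and applying \cref{lemma:gap_from_above} to any blow-up of $u$ at $x$, the only possible frequencies in $\mathcal F(u)\cap B_{\sfrac12}$ are $1$ and $\sfrac32$. Combined with the contradiction assumption, every point of $\mathcal F(u)\cap D$ has frequency exactly $1$, and hence $\mathcal F(u)$ is locally a smooth $(d-1)$-manifold at each such point.

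Next I invoke \cref{lemma:eps_reg_outside} with $\eps/4$ in place of $\eps$ (choosing $\delta$ sufficiently small in terms of $\eps$): outside the thin cylinder $C_{\eps/4}$, the free boundary $\mathcal F(u)\cap(B_{\sfrac12}\setminus C_{\eps/4})$ is the disjoint union of three $C^{1,\alpha}$-graphs $\Gamma_{12},\Gamma_{23},\Gamma_{31}$ over the three half-hyperplanes $\partial\{Y_i>0\}\cap\partial\{Y_j>0\}$, whose normals lie in the $(x_{d-1},x_d)$-plane. Thus each $\Gamma_{ij}$ is transversal to $D$ in $D\setminus C_{\eps/4}$, the set $\mathcal F(u)\cap(D\setminus C_{\eps/4})$ is a disjoint union of three smooth arcs from $\partial(D\cap C_{\eps/4})$ to $\partial D$ close to the three rays of $\mathcal F(Y)\cap D$, and $\ell$ meets $\mathcal F(u)$ transversally at exactly $3$ points. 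The delicate step is to promote $\mathcal F(u)\cap D$ to a $C^1$ $1$-submanifold of the entire disk $D$: smoothness of $\mathcal F(u)$ at each of its points is already given, while transversality of $D$ with $\mathcal F(u)$ at frequency-$1$ points inside $D\cap C_{\eps/4}$ uses the $x''$-translation invariance of $Y$ together with $\mathrm{d}_{\Sigma_N}(u,Y)\leq\delta$: via a blow-up/compactness argument, an $x''$-directed normal to the tangent half-space at a frequency-$1$ point $x_*$ would force $u$ to vary significantly in some $x''$-direction near $x_*$, contradicting its uniform proximity to the $x''$-invariant profile $Y$.

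Once $\mathcal F(u)\cap D$ is established as a relatively closed $C^1$ $1$-submanifold of the simply connected disk $D$, \cref{l:topological-lemma} applies to $\ell$ (viewed as a curve in $\mathcal C(x\to x)$ for any $x\in\ell$ off the three crossings) and forces the number of intersections to be \emph{even}; since this number equals $3$ by the previous step, we reach a contradiction, completing the proof. The main obstacle lies in the transversality verification inside the thin cylinder $C_{\eps/4}$: the $L^\infty$-bound does not pass to blow-ups, and this is precisely where the $x''$-invariance of the limit $Y$ is crucially used to rule out potential frequency-$1$ interfaces sitting ``parallel'' to $D$ near the singular axis.
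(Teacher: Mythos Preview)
Your overall architecture---contradiction, frequency gap to reduce to $\gamma=1$, the three transversal crossings of a loop around the axis, and the parity statement of \cref{l:topological-lemma}---is exactly the paper's strategy. The difference, and the place where your argument has a genuine gap, is the choice of the ambient manifold $\mathcal M$.

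You take $\mathcal M=D$, the $2$-dimensional slice $\{x''=x_0''\}$, and need $\mathcal F(u)\cap D$ to be a relatively closed $C^1$ one-submanifold of $D$. For this you must show that the $(d-1)$-manifold $\mathcal F(u)$ meets $D$ transversally at \emph{every} point of $D\cap C_{\eps/4}$. Your justification (``an $x''$-directed normal would force $u$ to vary in an $x''$-direction, contradicting proximity to the $x''$-invariant $Y$'') does not close: for a frequency-$1$ point $x_*$ at distance $r$ from the axis one only knows $|Y|\sim r^{3/2}$ there, so when $r\lesssim\delta^{2/3}$ the bound $\mathrm d_{\Sigma_N}(u,Y)\le\delta$ gives essentially no control on the direction of $\nabla u$ or on the orientation of the tangent plane to $\mathcal F(u)$. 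The $L^\infty$ bound, as you yourself note, does not survive blow-up, and nothing in the minimality prevents the regular interface from touching $D$ tangentially at isolated points near the axis. Without transversality, $\mathcal F(u)\cap D$ need not be a $1$-manifold and \cref{l:topological-lemma} does not apply.

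The paper sidesteps this entirely by \emph{thickening} the slice. Since $\gamma(u,\cdot)$ is upper semicontinuous and $\mathcal F(u)\cap\{x''=x_0''\}\cap\{|y|\le 1/2\}$ is a compact set consisting solely of frequency-$1$ points (by the contradiction hypothesis inside $C_\eps$ and by \cref{lemma:eps_reg_outside} outside), there exists $t>0$ such that every point of $\mathcal F(u)$ in the full-dimensional cylinder
\[
\mathcal U:=\bigl\{(x'',x_{d-1},x_d):|x''-x_0''|<t,\ \sqrt{x_{d-1}^2+x_d^2}<\tfrac12\bigr\}
\]
still has frequency $1$. Then $\mathcal F(u)\cap\mathcal U$ is automatically a $(d-1)$-dimensional $C^{1,\alpha}$ submanifold of the simply connected $d$-manifold $\mathcal U$, with no transversality to verify. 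One applies \cref{l:topological-lemma} with $\mathcal M=\mathcal U$ and the same circle $\ell$; the three crossings come from \cref{lemma:eps_reg_outside} exactly as in your argument, and the parity contradiction follows. Replacing your $2$-disk by this thickened $\mathcal U$ fixes the gap with no further work.
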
    
	\begin{proof}
		We argue by contradiction and assume that there exist $\eps,\rho\in(0,\sfrac{1}{2})$, vanishing sequences $\{\delta_n\}\sub\R_+$ and $u_n\in  H^1(B_1;\Sigma_N)\cap C^{0,1}(B_1;\R^N)$ such that
		\begin{equation*}
			\norm{\d_{\Sigma_N}(u_n,Y)}_{L^\infty(B_1)}\leq \delta_n\quad\text{and}\quad \sup_{x\in B_{\frac{1}{2}}}\mathcal{N}\left(u,x,\frac{1}{2}\right)<\frac{3}{2}+\delta_n,
		\end{equation*}
		for all $n\in\N$ and there exist $x''_0\in B_{\rho}''$ such that there holds $\gamma(u_n,x)\neq \sfrac{3}{2}$ for all points $x\in \mathcal F(u_n)$ of the form $x=(x''_0,x_{d-1},x_d)\in B_{\rho}\cap C_\eps$. In view of \Cref{lemma:gap_from_above} and the fact that
		\begin{equation*}
			\gamma(u_n,x)\leq \mathcal{N}\left(u_n,x,\frac{1}{2}\right)<\frac{3}{2}+\delta_n,
		\end{equation*}
		for all $x\in B_{\rho}\cap C_\eps\cap\mathcal{F}(u_n)$, we derive that $\gamma(u_n,x)<\sfrac{3}{2}$ for $n$ sufficiently large for all $x\in B_{\rho}\cap C_\eps\cap\mathcal{F}(u_n)$. On the other hand, by the optimal frequency gap we also know that $\gamma(u_n,x)=1$  for all $x=(x''_0,x_{d-1},x_d)\in B_{\rho}\cap C_\eps\cap\mathcal{F}(u_n)$. As a consequence, for some $n$ large enough, we can find $t>0$ such that in the set 
		$$\mathcal U:=\Big\{(x'',x_{d-1},x_d)\ :\ |x''-x''_0|<t,\ \sqrt{x_{d-1}^2+x_d^2}<\frac12\Big\},$$
		the whole free boundary $\mathcal F(u_n)$ consists only of points of frequency $1$ and the number of connected components of $\mathcal U\setminus \mathcal F(u_n)$ is finite. Let now $\omega$ be the connected component containing $B_{1/2}\cap\{u_{n,1}>0\}\setminus \mathcal C_{\eps}$ and let $y\in B_{1/2}\cap\{u_{n,1}>0\}\setminus \mathcal C_{\eps}$ be fixed. By \cref{l:topological-lemma} (applied to $\mathcal M=\mathcal U$ and $\mathcal F=\mathcal F(u_n)$) every closed curve starting from $y$, contained in $\mathcal U$, and intersecting $\mathcal F(u_n)$ a finite number of times (and in a transversal way) should cross $\mathcal F(u_n)$ an even number of times. This is a contradiction since a closed curve circling around $\mathcal C_\eps$ crosses $\mathcal F(u_n)$ exactly three times, in view of \Cref{lemma:eps_reg_outside}.
	\end{proof}

\begin{proof}[\bf Proof of \cref{t:main-main}]
Let $x_0\in\mathcal F_{\sfrac32}{(u)}$ and let $u_0$ be the $\sfrac32$-homogeneous blow-up of $u$ at $x_0$, which is unique and non-zero by \cref{prop:rate}. Up to multiplying $u$ with a constant, we can suppose that $u_0=Y$. By \cref{lemma:gap_from_above} we know that in a neighborhood of $x_0$ there are only points of frequency $1$ or $\sfrac32$. Moreover, without loss of generality, we can take $x_0=0$. We will prove that there are $\rho>0$ and a $C^{1,\alpha}$ function $\eta:B_\rho''\to D_\rho$, where $D_\rho\subset\R^2$ is the disk of radius $\rho$ in $\R^2$, such that 
$$\mathcal F_{\sfrac3/2}(u)\cap \Big(B_\rho''\times D_\rho\Big)=\Big\{(x'',\eta(x''))\ :\ x''\in B_\rho''\Big\}.$$
We proceed in several steps. 

\noindent{\bf Step 1.} There are $\rho>0$, $C>0$ and a modulus of continuity $\bar\omega$ such that, for every $p\in B_\rho''\times D_\rho$ there is a $(d-2)\times 2$ real matrix $M_{p}$ such that 
\begin{multline}\label{e:matrix-M-differentiability}
B_r(p)\cap\mathcal F_{\sfrac32}(u)\subset \\
\Big\{(x'',x_{d-1},x_d)\colon x''\in B_r''(p''),\ |M_{p}[x''-p'']-(x_{d-1}-p_{d-1},x_d-p_d)|\le \bar\omega(r)\Big\},
\end{multline}
for every $r\in(0,\rho)$. Moreover, it holds
\begin{equation}\label{e:matrix-M-oscillation}
\|M_{p}-M_{q}\|\le C|p-q|^{\bar\alpha}\quad\text{for all}\quad p,q\in B_{\rho}''\times D_\rho,
\end{equation}
where $\bar\alpha>0$ is as in \Cref{lemma:osc}. Let $p\in B_\rho''\times D_\rho$. 
Thanks to \cref{prop:rate} we know that there is a unique $Y^{p}$ such that
\[
\left\|\d_{\Sigma_N}(u^{p,r},Y^p)\right\|_{L^\infty( B_1)}\le C_{\tu{rate}}'r^{\alpha'},
\]
for all $r<R$, where $R$ is uniform in a neighborhood of $0$. By \cref{lemma:eps_reg_outside}, there is a modulus of continuity $\omega(r)$ such that, for $r$ small enough, it holds 
$$\mathcal F_{\sfrac32}(u)\cap B_r(p)\subset \Big\{x\in B_r(p)\ :\ \dist(x,{\rm Sing}\,(Y^{p}))<\omega(r)\Big\}.$$
On the other hand, by \cref{lemma:osc} (up to renaming the constant $C_{\tu{osc}}$), we have that for any $p,q\in B_\rho''\times D_\rho$ (including $q=0$)
$$\|\d_{\Sigma_N}(Y^{p},Y^{q})\|_{L^\infty(B_1)}\le C_{\tu{osc}}|p-q|^{\bar\alpha}.$$
Since the space of triple junction blow-ups is finite dimensional, for the singular sets of $Y^{p}$ and $Y^{q}$ we have 
$$\d_{\mathcal H}\big({\rm Sing}\,(Y^{p})\cap B_1,{\rm Sing}\,(Y^{q})\cap B_1\big)\le C_{\tu{osc}}'|p-q|^{\bar\alpha}.$$
Thus, for each $p\in B_\rho''\times D_\rho$, the singular set ${\rm Sing}\,(Y^{p})$ is the graph of an affine function from $\R^{d-2}$ to $\R^2$. This provides the existence of the matrix $M_{p}$ for which \eqref{e:matrix-M-differentiability} and \eqref{e:matrix-M-oscillation} hold, where we can choose the modulus of continuity in \eqref{e:matrix-M-differentiability} to be $\bar\omega(r)=\omega(r)+Cr^{\bar\alpha}$.\medskip

\noindent{\bf Step 2.} For $\rho$ small enough $\mathcal F_{\sfrac32}(u)\cap B_\rho$ is a graph of a function $\eta:B_\rho''\to \R^2$. Indeed, by the no-holes lemma \cref{l:no-holes}, we know that for every $x''\in\R^{d-2}$ with $|x''|$ small enough, there exist $(x_{d-1},x_d)\in\R^2$ such that $x=(x'',x_{d-1},x_d)\in {\mathcal{F}_{\sfrac32}(u)}$. On the other hand, given $x''$, the point $(x_{d-1},x_d)$ is unique, thanks to Step 1.\medskip

\noindent{\bf Step 3.} Regularity of $\eta$. First, we notice that thanks to \eqref{e:matrix-M-differentiability}, $\eta$ is differentiable and $M_{p}$ is precisely its Jacobian matrix at $p$. Thanks to \eqref{e:matrix-M-oscillation}, we get that $\eta\in C^{1,\alpha}$. 
\medskip

Finally, the regularity of $\mathcal F(u)$ around $0$ is a consequence of the regularity of $\eta$, the uniqueness of the blow-up, the control of the oscillation in \Cref{lemma:osc}, and the epsilon-regularity for $\mathcal F_1{(u)}$ (\cite[Lemma 9.8]{OV}); the argument is standard and for the details we refer for instance to \cite[Section 9]{OV}.
\end{proof}

\section{Harmonic maps with values in $\R$-trees. Proof of \cref{t:harmonic-maps-graphs}}\label{s:harmonic-maps-R-trees}

In \cref{sub:R-trees-definition,sub:R-trees-energy-minimizing,sub:R-trees-order-function,sub:R-trees-regular-singular} we introduce the main definitions needed in order to state and prove \Cref{t:harmonic-maps-graphs}. 

\subsection{$\R$-trees}\label{sub:R-trees-definition} We recall that an \emph{$\R$-tree} is a metric space $(T,\d_T)$ satisfying the following conditions: 
\begin{itemize}
\item for any $p,q\in T$ there is a unique injective arc $\sigma:[0,1]\to T$ joining $p$ and $q$;
\item moreover, the arc $\sigma$ is rectifiable and its length is precisely the distance $\d_T(p,q)$. 
\end{itemize}
Moreover, given a point $p\in T$ we say that: 
\begin{itemize}
    \item $p$ is a \emph{boundary point}, if $T\setminus\{p\}$ is connected;
    \item $p$ is an \emph{edge point}, if $T\setminus\{p\}$ has two connected components; 
    \item $p$ is a \emph{vertex}, if $T\setminus\{p\}$ has three or more connected components. 
\end{itemize}

A {\it locally finite tree} is a tree in which all the vertices are isolated and for every vertex $p$, $T\setminus\{p\}$ has finitely many connected components. An \emph{$N$-pod} is a tree with only one vertex $\{p\}$ and finitely many edges $E_1,\dots,E_N$ attached to it; for each $j=1,\dots,N$, there is a curve $\sigma:[0,\ell_j)\to T$, with $\ell_j\in(0,+\infty]$, such that:
\begin{itemize}
\item $\sigma$ is injective;
\item $\sigma$ is isometric: $|s-t|=\d_T\big(\sigma(s),\sigma(t))$ for all $s,t\in[0,\ell_j)$; \item $\sigma(0)=p$ and $\sigma((0,\ell_j))=E_j$. 
\end{itemize}
In particular, it is immediate to check that any $N$-pod $T_N$ is isometric to an open subset of $\Sigma_N$ (defined in \eqref{e:definition-of-Sigma-N}) containing the origin. Finally, we say that $T'\subset T$ if $T'$ is an $\R$-tree.
\medskip

\subsection{Energy-minimizing maps}\label{sub:R-trees-energy-minimizing} Given a Lipschitz domain $\Omega\subset\R^d$, a general $\R$-tree $(T,\d_T)$ and a map $f:\Omega\to T$, Korevaar and Schoen showed in \cite{KS} that the gradient of $f$ at $x\in\Omega$ can be defined as the density (at $x$) of the measure $|\nabla f(x)|^2\dx = \d e$ obtained as weak limit (as $\eps\to0$) of a sequence of absolutely continuous measures 
$e_\eps(x)\dx$ with densities
$$e_\eps(x):=\frac{1}{\omega_d\eps^{d+1}}\int_{\partial B_\eps(x)}\d_T^2(f(x),f(y))\,
\d\mathcal{H}^{d-1}(y).$$
One can then accordingly define the Sobolev space $W^{1,2}(\Omega;T)$, and prove the existence of a well-defined trace operator $W^{1,2}(\Omega;T)\hookrightarrow L^2(\partial\Omega;T)$, see \cite[Theorem 1.12.2]{KS}.

\medskip

\begin{definition}
We say that a map $u\in W^{1,2}(\Omega;T)$ is energy-minimizing if 
$$\int_{\Omega}|\nabla u|^2\dx\le \int_{\Omega}|\nabla g|^2\dx,$$
for all $g\in W^{1,2}(\Omega;T)$ with the same trace as $u$ on $\partial\Omega$. 
\end{definition}
\begin{remark}
It was proved in \cite{KS} (in the general case of non-positively curved targets) that any energy-minimizing map $u\in W^{1,2}(\Omega;T)$ is locally Lipschitz continuous in $\Omega$.
\end{remark}

\subsection{Order function}\label{sub:R-trees-order-function} In \cite{GS}, Gromov and Schoen introduced the \emph{order function} (analogous to the Almgren's frequency function) for any energy-minimizing $u\in W^{1,2}(\Omega;T)$\footnote{To be precise, in \cite{GS} the author considered the case in which the target is an euclidean building embedded in $\R^N$, but in view of the work \cite{KS} one can directly extend the definition to $\R$-trees, see \cite{Sun}.
}
\begin{equation*}
\mathrm{Ord}^u(x_0,r,p)=\frac{\displaystyle r\int_{B_r(x_0)}|\nabla u|^2\dx}{\displaystyle \int_{\partial B_r(x_0)}\d_T^2(u(x),p)\,\d\mathcal H^{d-1}(x)},
\end{equation*}
for any $p\in T$ and $x_0\in \Omega$. In \cite{Sun} it was shown that, for energy-minimizing maps $u$, the function $r\mapsto \mathrm{Ord}^u(x_0,r,p)$ is monotone increasing in $r$ and that, taking $p=u(x_0)$, the limit 
$$\mathrm{Ord}^u(x_0):=\lim_{r\to0^+}\mathrm{Ord}^u(x_0,r,u(x_0))$$
exists and is finite at every $x_0\in\Omega$. 

\subsection{Regular and singular points}\label{sub:R-trees-regular-singular} We now recall some known results which lead to the proof of \Cref{t:harmonic-maps-graphs}. Thanks to \cite{GS,Sun}, we know that every energy-minimizing map $u:\Omega\to T$ has the following properties:
\begin{itemize}
\item {\it Frequency gap \rm(\cite{GS}).} There exists $\delta_d>0$ such that, for every $x_0\in\Omega$, we have:  
\begin{center}
either $\mathrm{Ord}^u(x_0)= 1$ or $\mathrm{Ord}^u(x_0)\ge 1+\delta_d$.\smallskip
\end{center}
\item {\it Epsilon-regularity \rm(\cite{GS}).} The set 
\[
\mathrm{Reg}(u):=\{x\in \Omega\ :\ \mathrm{Ord}^u(x)=1\},
\]
is an open subset of $\Omega$. For every $x_0\in \mathrm{Reg}(u)$, there is $r>0$ such that $u(B_r(x_0))$ lies in an embedded arc of $T$, which is isometric to an interval $(a,b)\subset\R$. Moreover, $u:B_r(x_0)\to(a,b)$ is harmonic in $B_r(x_0)$ and $\nabla u(x_0)\neq 0$.\smallskip
\item {\it Hausdorff dimension of the singular set \rm(\cite{GS}).} The set 
\[
\mathrm{Sing}(u):=\Omega\setminus \mathrm{Reg}(u)=\{x\in\Omega\ : \ \mathrm{Ord}^u(x)\geq 1+\delta_d\},
\]
is a closed subset of $\Omega$ of Hausdorff dimension at most $d-2$.\smallskip 
\item {\it Clean-up at singular points \rm(\cite{Sun}).} For every $x_0\in \mathrm{Sing}(u)$, there is $r>0$ such that $u(B_r(x_0))$ lies in an (embedded) $N$-pod $T_N\subset T$ (see \cite[Theorem 1.4]{Sun}). 
\end{itemize}
\subsection{Proof of \cref{t:harmonic-maps-graphs}}\label{sub:R-trees-proof-of-the-theorem} 
Let $u:B_1\to T$ be an energy-minimizing map. Let $x_0\in \textrm{Sing}(u)$. Thanks to the clean-up result from \cite{Sun}, there are a ball $B_r(x_0)$ and an $N$-pod  $T_N\subset T$ with vertex $p_0\in T_N$, such that $u(B_r(x_0))\subset T_N$ and $u(x_0)=p_0$, and such that $u:B_r(x_0)\to T_N$ is an energy-minimizing map. There is a map $\Phi:T_N\to \Sigma_N$, which is an isometry between the $N$-pod $(T_N,\d_T)$ and the subset $\Phi(T_N)$ of $\Sigma_N$ equipped with the metric $\d_{\Sigma_N}$. Moreover, we can suppose that $\Phi(p_0)=0$. We consider the map $\widetilde u:B_r(x_0)\to \Phi(T_N)$, $\widetilde u=u\circ \Phi^{-1}$. Then, $\widetilde u=(\widetilde u_1,\dots,\widetilde u_N):B_r(x_0)\to T_N$ and  the energy of $u$ can be written in the form  
$$\int_{B_r(x_0)}|\nabla u|^2=\sum_{i=1}^N\int_{B_r(x_0)}|\nabla \widetilde u_i|^2\dx.$$
Since any map $\widetilde g:B_r(x_0)\to\Phi(T_N)$ can be written in the form $\widetilde g=g\circ \Phi^{-1}$ the energy minimizing property of $u:B_r(x_0)\to T_N$ translates into
$$\sum_{i=1}^N\int_{B_r(x_0)}|\nabla \widetilde u_i|^2\dx\le \sum_{i=1}^N\int_{B_r(x_0)}|\nabla \widetilde g_i|^2\dx,$$
for all $\widetilde g=(\widetilde g_1,\dots,\widetilde g_N):B_r(x_0)\to \Phi(T_N)\subset \Sigma_N$ with $\widetilde g_i-\widetilde u_i\in H^1_0(B_r(x_0))$ for $i=1,\dots,N$. Moreover, we have that, for every $x\in B_r(x_0)\cap u^{-1}(\{p_0\})$, the Order function of $u$ at $x$ coincides with the Almgren's frequency function of $\widetilde u$ at $x$:
$$\text{Ord}^{u}(x)=\gamma(\widetilde u,x).$$
Applying \cref{t:main-main} to the map $\widetilde u:B_r(x_0)\to\Sigma_N$ we obtain the decomposition 
\begin{align*}
B_r(x_0)=&\Big\{x\in B_r(x_0)\ :\ \widetilde u(x)=0\ ,\ \gamma(\widetilde u,x)=1\Big\}\\
&\quad\cup \Big\{x\in B_r(x_0)\ :\ \widetilde u(x)=0\ ,\ \gamma(\widetilde u,x)=\frac32\Big\}\\
&\qquad\cup\Big\{x\in B_r(x_0)\ :\ \widetilde u(x)=0\ ,\ \gamma(\widetilde u,x)>\frac32+\delta_d\Big\},
\end{align*}
as well as the regularity of 
$$\mathcal F_{\sfrac32}\cap B_r(x_0):=\Big\{x\in B_r(x_0)\ :\ \widetilde u(x)=0\ ,\ \gamma(\widetilde u,x)=\frac32\Big\}.$$
On the other hand, since $p_0$ is the only vertex of $T_N$, we have that, for every $x\in B_r(x_0)\setminus u^{-1}(\{p_0\})$, there is a neighborhood $B_\rho(x)$, such that $u(B_\rho(x))$ is contained in an edge of $T_N$, which is isometric to an open segment of $\R$. Thus, there is a map $\Psi:B_r(x)\cap T\to \R$, which is an isometry and for which the map $u\circ\Psi^{-1}:B_r(x)\to\R$ is harmonic. Thus, for all $x\in B_r(x_0)\setminus u^{-1}(\{p_0\})$, we get
$$\text{Ord}^{u}(x)=1\quad\text{or}\quad \text{Ord}^{u}(x)\ge 2.$$
This implies that the set $\mathcal F_{\sfrac32} \cap B_r(x_0)$ defined above can also be written as:
$$\mathcal F_{\sfrac32}\cap B_r(x_0)=\Big\{x\in B_r(x_0)\ :\ \text{Ord}^{u}(x)=\frac32\Big\},$$
and, as a consequence, we obtain the following decomposition of the entire ball $B_r(x_0)$:
\begin{align*}
B_r(x_0)&=\Big\{x\in B_r(x_0)\ :\ \text{Ord}^{u}(x)=1\Big\}\cup \mathcal F_{\sfrac32}\cup\Big\{x\in B_r(x_0)\ :\ \text{Ord}^{u}(x)>\frac32+\delta_d\Big\},
\end{align*}
which concludes the proof of \cref{t:harmonic-maps-graphs}. \qed

	\subsection*{Acknowledgements}

	The authors are supported by the European Research Council (ERC), through the European Union’s Horizon 2020 project ERC VAREG - Variational approach to the regularity of the free boundaries (grant agreement No. 853404). The authors acknowledge the MIUR Excellence Department Project awarded to the Department of Mathematics, University of Pisa, CUP I57G22000700001. R.O. acknowledges support from the 2024 INdAM-GNAMPA project no. \texttt{CUP\_E53C23001670001} and from the 2026 INdAM-GNAMPA project no. \texttt{CUP\_E53C25002010001}. B.V. acknowledges support from the projects PRA\_2022\_14 GeoDom (PRA 2022 - Università di Pisa) and MUR-PRIN “NO3” (n.2022R537CS). 

    The authors would like to thank the anonymous referees for the careful reading of the paper, which helped to improve the exposition and to fix an issue with the proof of \Cref{t:epi} in a previous version of the paper. The authors are also grateful to Gianmaria Verzini for the useful discussion and for pointing out the reference \cite{wang-zhang}.

    \subsection*{Corresponding author} Bozhidar Velichkov (e-mail: bozhidar.velichkov@unipi.it)
\subsection*{Conflict of interests and data availability statements} The authors have no competing interests to declare. Data sharing is not applicable to this article as no datasets were generated or analyzed during the current study.
    
	\bibliographystyle{aomalpha} 
	\bibliography{biblio.bib}

\end{document}